\newtheorem{theorem}{Theorem}[section]
\newtheorem{proposition}[theorem]{Proposition}
\newtheorem{corollary}[theorem]{Corollary}
\newtheorem{lemma}[theorem]{Lemma}
\theoremstyle{definition}
\newtheorem{remark}[theorem]{Remark}
\newtheorem{definition}[theorem]{Definition}
\newcommand{\Motzbic}{M}
\newcommand{\Motz}{M^*}
\newcommand{\Pos}{Q}
\newcommand{\WPos}{Q^*}
\newcommand{\Gen}{G}
\newcommand{\Motzbicset}{{\mathcal M}}
\newcommand{\Motzset}{{\mathcal M}^*}
\newcommand{\Posset}{{\mathcal Q}}
\newcommand{\Genset}{\mathcal{G}}
\newcommand{\E}{\mathcal{E}}
\newcommand{\Cylset}{\mathcal{O}}
\newcommand{\Cyl}{O}
\newcommand\touch[1]{\check{{#1}}}
\newcommand{\haut}{\operatorname{ht}}
\newcommand{\TL}{\operatorname{TL}}
\renewcommand{\H}{\operatorname{Heap}}
\newcommand{\Aaff}{\aff{A}}
\newcommand{\aff}[1]{\widetilde{#1}}
\begin{document}

\title[Fully commutative elements  in finite and affine Coxeter groups]{Fully commutative elements in finite and affine Coxeter groups}

\author[Riccardo Biagioli]{Riccardo Biagioli}
\address{Institut Camille Jordan, Universit\'e Claude Bernard Lyon 1,
69622 Villeurbanne Cedex, France}
\email{biagioli@math.univ-lyon1.fr}
\urladdr{http://math.univ-lyon1.fr/{\textasciitilde}biagioli}

\author[Fr\'ed\'eric Jouhet]{Fr\'ed\'eric Jouhet}
\address{Institut Camille Jordan, Universit\'e Claude Bernard Lyon 1,
69622 Villeurbanne Cedex, France}
\email{jouhet@math.univ-lyon1.fr}
\urladdr{http://math.univ-lyon1.fr/{\textasciitilde}jouhet}

\author[Philippe Nadeau]{Philippe Nadeau}
\address{CNRS, Institut Camille Jordan, Universit\'e Claude Bernard Lyon 1,
69622 Villeurbanne Cedex, France}
\email{nadeau@math.univ-lyon1.fr}
\urladdr{http://math.univ-lyon1.fr/{\textasciitilde}nadeau}

%\thanks{}

\date{\today}

\subjclass[2010]{}

\keywords{Fully commutative elements, Temperley--Lieb algebras, Coxeter groups, generating functions, lattice walks, heaps.}

\begin{abstract}
An element of a Coxeter group $W$ is {\em fully commutative} if any two of its reduced decompositions are related by a series of transpositions of adjacent commuting generators. These elements were extensively studied by Stembridge, in particular in the finite case. They index naturally a basis of the generalized Temperley--Lieb algebra. In this work we deal with any finite or affine Coxeter group $W$, and we give explicit descriptions of fully commutative elements. Using our characterizations we then enumerate these elements according to their Coxeter length, and find in particular that the corrresponding growth sequence is ultimately periodic in each type.  When the sequence is infinite, this implies that the associated  Temperley--Lieb algebra has linear growth. 
\end{abstract}

\maketitle

%\tableofcontents

%%%%%%%%%%%%%%%%%%%%%%%%%%%%%%%%%%%%%%%%%%%
\section*{Introduction}
\label{sec:intro}

Let $W$ be a Coxeter group. An element $w \in W$ is said to be {\em fully commutative} if any reduced expression for $w$ can be obtained from any other one by transposing adjacent pairs of commuting generators. These elements were extensively studied by Stembridge in the series of papers~\cite{St1}-\cite{St3}.  He classified in~\cite{St1} the Coxeter groups having a finite number of fully commutative elements, which is independently done by Graham in~\cite{Graham}, or Fan in~\cite{Fan} in the simply--laced case.  Stembridge also gives in \cite{St1} a useful characterizing property of full commutativity, by showing that reduced words for such an element can be viewed as the linear extensions of a {\em heap}, which is a certain kind of poset whose vertices are labeled by generators of $W$. In \cite{St3}, Stembridge enumerates fully commutative elements for each of the previous finite cases, while  connections with enriched $P$-partitions (the letter $P$ stands here for ``Poset") and Schur's $Q$-functions are studied  in \cite{St2} in types $B$ and $D$.
 
The original context for the appearance of full commutativity is algebraic and relates to the \emph{generalized Temperley--Lieb algebras}.  The (type $A$) \emph{Temperley--Lieb algebra} was first defined in \cite{TemperleyLieb}, in the context of statistical mechanics. Later, it was realized by Jones in~\cite{JonesAnnals} that it is a quotient of the \emph{Iwahori--Hecke algebra} of type $A$. This point of view was used by Fan in~\cite{Fan} in the simply--laced case, and by Graham in~\cite{Graham} in general, to define a generalized Temperley--Lieb algebra for each Coxeter group. They proved that for any $W$, the associated generalized Temperley--Lieb algebra admits a linear basis indexed by the fully commutative elements of $W$.

 The set of fully commutative elements was also studied in connection to~\emph{Kazhdan--Lusztig cells}, which form a partition of the Coxeter group $W$.  In~\cite{GreLoFCKL}, Green and Losonczy characterize when the set of fully commutative elements of a finite $W$ is a union of double-sided cells. This was extended to affine types in the works~\cite{ShiFCKL1,ShiFCKL2} of Shi. Other cells were defined and studied by Fan~\cite{Fan_JAMS} and Fan and Green~\cite{FanGreen_Affine}\\

The main goal of the present paper is to give a complete description, in terms of heaps, of fully commutative elements for each affine irreducible Coxeter group. From this, we derive the  generating functions of the fully commutative elements, according to the Coxeter length, for all finite and affine Coxeter groups: this extends naturally the work~\cite{St3} of Stembridge.  %Nevertheless, our results allow us to show that infinite dimensional Temperley--Lieb algebras associated to affine Coxeter groups have linear growth, as we will explain at the end of the introduction. 

Our main characterization result is performed in a case-by-case fashion, so that it is split into several statements. These are Theorem~\ref{theorem:walks_type_Atilde} in type $\Aaff$,  Theorems~\ref{theo:affineCfamilles} and~\ref{theo:affineBDfamilles}  for the other classical types, and the results of Section~\ref{sub:affExcept} in exceptional  types. It can be summarized as follows.\\

\noindent {\bf Main result}: {\em For each affine irreducible Coxeter group $W$, we give an explicit description of its subset $W^{FC}$ of fully commutative elements.}\\
 
Here, by ``description", we mean that for each classsical type we define precisely disjoint families of heaps partitioning $W^{FC}$, and whose generating functions can all be computed.  Our characterizations naturally bring us to define a particular subfamily of elements in $W^{FC}$, which we call \emph{alternating} fully commutative elements, and to which we can associate bijectively certain \emph{lattice walks}. The latter are crucial for computing generating functions, as the various classes of lattice walks we are concerned with can all be decomposed by taking the Coxeter length into account.

 We therefore derive an expression for $W^{FC}(q):=\sum_{w \in W^{FC}}q^{\ell(w)}$, when $W$ is any finite or affine irreducible Coxeter group, and $\ell$ denotes the Coxeter length. This results in the following formal consequence, corresponding to Corollary~\ref{cor:Atilde} (\emph{resp.} Proposition~\ref{typeA}) in type $\Aaff$ (\emph{resp.} $A$) and Propositions~\ref{prop:gf_affC}-\ref{prop:afftypeD} (\emph{resp.} Proposition~\ref{typeB}) in types $\aff{C}$, $\aff{B}$ and $\aff{D}$ (\emph{resp.} $B$ and $D$), while the results for exceptional types are given in Section~\ref{sub:affExcept}.\\

\noindent {\bf Corollary A}:
{\em For each finite or affine group $W$, we compute the generating function $W^{FC}(q)$.}\\

For the finite type $A$, this recovers the nice $q$-analogue of the Catalan numbers  discovered by Barcucci et al. in~\cite{BDPR}, where they enumerated $321$-avoiding permutations (corresponding here to the fully commutative elements) according to their inversion number. In the case of the affine symmetric group $\Aaff$, a similar $q$-analogue has been recently found by Hanusa and Jones in~\cite{HanJon}; from our analysis we  find a much simpler expression than in~\cite{HanJon} for the corresponding generating function. To our knowledge, for all other classical affine or finite types our results are new. 

Moreover,  thanks to our interpretations in terms of lattice walks, a striking consequence of our characterizations above is that, for any affine Coxeter group $W$, the {\em growth sequence} of  $W^{FC}$ (the sequence of coefficients of $W^{FC}(q)$) is \emph{ultimately periodic}. This was already shown in type $\Aaff$ by Hanusa and Jones (see~\cite[\S 5]{HanJon}); however our method  settles positively a question in~\cite{HanJon} regarding the beginning of the periodicity. These results correspond to Theorem~\ref{theo:periodicityA} in type $\Aaff$ and Theorem~\ref{theo:periodicityBCD} in types $\aff{C}$, $\aff{B}$ and $\aff{D}$, and appear in Section~\ref{sub:affExcept} for exceptional types.\\

\noindent {\bf Corollary B}:
{\em For each irreducible, affine group $W$, the growth sequence of $W^{FC}$ is {\em ultimately periodic}, with period recorded in the following table ($\aff{F}^{FC}_4,\aff{E}^{FC}_8$ are finite sets):}
  \[\begin{array}{ l || c|c|c|c|c|c|c|c}
    \textsc{Affine Type} &\aff{A}_{n-1}&\aff{C}_n&\aff{B}_{n+1}&\aff{D}_{n+2}&\aff{E}_6&\aff{E}_7&\aff{G}_2&\aff{F}_4,\aff{E}_8 \\ \hline
    \textsc{Periodicity} &n&n+1&(n+1)(2n+1)&n+1&4&9&5&1 \\
 \end{array}\]

 In addition, Theorems~\ref{theo:periodicityA} and~\ref{theo:periodicityBCD} also give the exact Coxeter length at which the periodicity begins for classical types, while the corresponding result for exceptional types are recorded in Section~\ref{sub:affExcept}. From the generating functions, it is also possible to determine, for each affine Coxeter group $W$, the {\em mean value $\mu_W$} of the growth sequence, which is the arithmetic mean of the values over a period, or equivalently the limit of the arithmetic means of the $k$ first values of the growth sequence when $k$ tends to infinity. These are given by Propositions~\ref{prop:meanA} and~\ref{prop:meanBCD} in classical types, and easily computed from the results of Section~\ref{sub:affExcept} in exceptional types.\\

Finally, the periodicity result has an interesting consequence in terms of Temperley-Lieb algebras. Recall that the growth of an algebra, with respect to a finite set of generators, is the function which associates to an integer $\ell$ the dimension of the vector space generated by all products of at most $\ell$ generators.\\

\noindent {\bf Corollary C}:
{\em For any irreducible affine Coxeter group with infinitely many fully commutative elements, the corresponding Temperley--Lieb algebra has {\em linear growth.}}\\

We explain and refine this in detail in Section~\ref{sub:TL}. As we also explain in the same section, this algebraic viewpoint allowed us to use computer tools to deal with fully commutative elements. In particular, the package \texttt{GBNP} for \texttt{GAP}~\cite{GAP4}, which uses noncommutative Gr\"obner basis techniques, was used intensively.

%%%%%%%%%%%%%%%%%%%%%%%%%%%%%%%%%%%%%%%%%%%
\section{Fully commutative elements, heaps and walks}
\label{sec:preliminaries}
%%%%%%%%%%%%%%%%%%%%%%%%%%%%%%%%%%%%%%%%%%%

In this first section, we will define fully commutative elements in Coxeter groups. These elements can be regarded as certain commutation classes of words. Therefore, following the approach of Stembridge~\cite{St1}, our central tool in studying fully commutative elements will be the structure of heaps, originally defined by Viennot in~\cite{ViennotHeaps}. In Section~\ref{sub:alternating}, we identify a particular class of these elements which occur naturally in classical types, and encode them bijectively  by lattice walks. We recall finally the classification of finite and affine Coxeter groups in Section~\ref{sub:finiteaffine}.

%%%%%%%%%%%%%%%%%%%%%%%%%%%%%%%%
\subsection{Fully commutative elements in Coxeter groups}
\label{sub:fullycomm}
%%%%%%%%%%%%%%%%%%%%%%%%%%%%%%%%

Let $M$ be a square symmetric matrix indexed by a finite set $S$, satisfying $m_{ss}=1$ and, for $s\neq t$, $m_{st}=m_{ts}\in\{2,3,\ldots\}\cup\{\infty\}$. The {\em Coxeter group} $W$ associated with the  Coxeter matrix $M$ is defined by generators $S$ and relations $(st)^{m_{st}}=1$ if $m_{st}<\infty$. These relations can be rewritten more explicitly as $s^2=1$ for all $s$, and \[\underbrace{sts\cdots}_{m_{st}}  = \underbrace{tst\cdots}_{m_{st}},\]  where $m_{st}<\infty$, the latter being called \emph{braid relations}. When $m_{st}=2$, they are simply \emph{commutation relations} $st=ts$, sometimes referred to as short braid relations in the literature. 

The {\em Coxeter graph} $\Gamma$ associated to $(W,S)$ is the graph with vertex set $S$ and, for each pair $\{s,t\}$ with $m_{st}\geq 3$, an edge between $s$ and $t$ labeled by $m_{st}$. When $m_{st}=3$ the edge is usually left unlabeled since this case occurs frequently. Therefore non adjacent vertices correspond precisely to commuting generators.

For $w\in W$, the {\em length} of $w$, denoted by $\ell(w)$, is the minimum length $l$ of any expression $w=s_1\cdots s_l$ with $s_i\in S$. These expressions of length $\ell(w)$ are called \emph{reduced}, and we denote by $\mathcal{R}(w)$ the set of all reduced expressions of $w$. A fundamental result in Coxeter group theory, sometimes called the {\em Matsumoto property}  is that any expression in $\mathcal{R}(w)$ can be obtained from any other one using only braid relations (see for instance~\cite{Humphreys}). The notion of full commutativity is a strengthening of this property.

\begin{definition}
\label{defi:FC}
 An element $w$ is \emph{fully commutative} (FC) if any reduced expression for $w$ can be obtained from any other one by using only commutation relations.
\end{definition}

The following characterization of FC elements, originally due to Stembridge, is particularly useful in order to test whether a given element is FC or not.

\begin{proposition} [Stembridge \cite{St1}, Prop. 2.1]
\label{prop:caracterisation_fullycom}
An element $w\in W$ is fully commutative if and only if for all $s,t$ such that $3\leq m_{st}<\infty$, there is no expression in $\mathcal{R}(w)$ that contains the factor $\underbrace{sts\cdots}_{m_{st}}$.
\end{proposition}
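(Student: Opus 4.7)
The plan is to prove the two implications separately, using the Matsumoto property as the core tool. The forward direction will be a contrapositive argument based on an invariance property of commutation classes; the reverse direction will extract a forbidden braid factor from any move sequence connecting two reduced expressions that fail to be commutation-equivalent.

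For the forward direction ($\Rightarrow$), I argue by contradiction. Suppose $w$ is FC but some $e \in \mathcal{R}(w)$ contains the factor $\underbrace{sts\cdots}_{m_{st}}$ with $3\leq m_{st}<\infty$. Applying the braid relation in place yields another reduced expression $e'\in\mathcal{R}(w)$, identical to $e$ except the factor is replaced by $\underbrace{tst\cdots}_{m_{st}}$. To derive a contradiction I introduce the \emph{$\{s,t\}$-projection} of a word, namely the subword obtained by erasing all letters distinct from $s$ and $t$. Since $m_{st}\geq 3$, the generators $s$ and $t$ do not commute, so every commutation relation $ab=ba$ used along a hypothetical path satisfies $\{a,b\}\neq\{s,t\}$; hence each commutation move leaves the $\{s,t\}$-projection unchanged. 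But $e$ and $e'$ have different $\{s,t\}$-projections (they differ precisely on the factor under consideration), so they cannot be related by commutations alone, contradicting full commutativity of $w$.

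For the reverse direction ($\Leftarrow$), also by contrapositive: assume $w$ is not FC, so there exist $e_1,e_2\in\mathcal{R}(w)$ not related by commutations alone. By the Matsumoto property, $e_1$ and $e_2$ are related by some sequence of moves, each being either a commutation ($m_{st}=2$) or a long braid relation ($3\leq m_{st}<\infty$). Since $e_1,e_2$ are not commutation-equivalent, at least one long braid move must appear in such a sequence. Fix one such sequence and consider the first long braid move in it: immediately before this move, the current reduced expression $e'$ is still commutation-equivalent to $e_1$, hence belongs to $\mathcal{R}(w)$, and by definition $e'$ contains the factor $\underbrace{sts\cdots}_{m_{st}}$ (or $\underbrace{tst\cdots}_{m_{st}}$, which is the same statement up to relabeling) to which the braid relation is about to be applied. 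This contradicts the assumption that no reduced expression contains a braid factor.

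The main subtlety is the invariance of the $\{s,t\}$-projection under commutations in the forward direction: one must carefully check that swapping adjacent commuting letters $a,b$ never alters the sequence of $s$'s and $t$'s. This is a short case analysis on how many of $a,b$ lie in $\{s,t\}$, using crucially that the case $\{a,b\}=\{s,t\}$ is excluded by $m_{st}\geq 3$. Once this lemma is in place, both directions are short and purely formal consequences of Matsumoto's theorem.
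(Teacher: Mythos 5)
Your proof is correct and follows essentially the same route as the paper: your forward direction is exactly the paper's argument that applying the braid relation produces a reduced expression whose $\{s,t\}$-subword (unchanged by commutations) differs, hence a different commutation class. Your reverse direction simply spells out, via Matsumoto's property and the first long braid move in a connecting sequence, the direction the paper dismisses as clear.
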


\begin{proof}
It is clear that the condition is sufficient. Now assume that $\mathbf{w}\in\mathcal{R}(w)$ has such a factor, then we get another reduced expression $\mathbf{w}'$ by applying the corresponding braid relation. But commutation relations cannot modify the subword of $\mathbf{w}$ in $s$ and $t$, so $\mathbf{w}'$ is not in the same commutation class and $w$ is not fully commutative.
\end{proof}

Therefore an element $w$ is FC if all reduced words avoid all braid words; since, by definition, $\mathcal{R}(w)$ forms a commutation class, the concept of heaps helps to capture the notion of full commutativity.

%%%%%%%%%%%%%%%%%%
\subsection{Heaps and fully commutative elements}
\label{sub:heapsFC}
%%%%%%%%%%%%%%%%%%

\subsubsection{Heaps}
\label{subsub:heaps}

We use the definition as given in~\cite[p.20]{GreenBook} (see also \cite[Definition 2.2]{KrattHeaps}), and discuss it afterwards.

\begin{definition}[Heap]
\label{defi:heaps}
Let $\Gamma$ be a finite graph with vertex set $S$. A {\em heap} of type $\Gamma$ (or $\Gamma$-heap) is a finite poset $(H,\leq)$, together with a labeling map $\epsilon:H\to\Gamma$, which satisfies the following conditions:
\begin{enumerate}[(a)]
\item \label{it:heap1} For any vertex $s$, the subposet $H_s:=\epsilon^{-1}(\{s\})$ is totally ordered, and for any edge $\{s,t\}$, the subposet $H_{\{s,t\}}:=\epsilon^{-1}(\{s,t\})$ is totally ordered.
\item \label{it:heap2} The ordering on $H$ is the transitive closure of the relations given by all chains $H_s$ and $H_{\{s,t\}}$, i.e. the smallest partial ordering containing these chains.
\end{enumerate}
\end{definition}

\begin{remark}
There is a more concrete way to reformulate condition \eqref{it:heap2}: one has $x< y$ in $H$ if and only if there exist $x_0=x< x_1< \cdots < x_{k-1}< x_k=y$ such that $\epsilon(x_i),\epsilon(x_{i+1})\in S$ are equal or adjacent in the graph $\Gamma$ for all $i$.
\end{remark}

Two $\Gamma$-heaps are {\em isomorphic } if there exists a poset isomorphism between them which preserves the labels.
The size $|H|$ of a heap $H$ is its cardinality. Given any subset $I\subset S$, we will note $H_{I}$ the subposet induced by all elements of $H$ with labels in $I$. In particular $H_{\{s\}}$ is the chain  $H_{s}=s^{(1)}<s^{(2)}<\cdots<s^{(k)}$ where $k=|H_s|$ is its cardinality. If $s,t$ are two labels such that $m_{st}\geq 3$, note that $H_{\{s,t\}}$ is also a chain.

Let us point out a simple operation on heaps: if $(H,\leq,\epsilon)$ is a heap, then its {\em dual heap} is $(H,\geq,\epsilon)$, which is the heap with the inverse order where the labels are kept the same. It is clear that this is indeed a heap as in Definition~\ref{defi:heaps}.

\subsubsection{Heaps and words}
\label{subsub:heapswords}

% We follow Stembridge~\cite{St1} here.
 Fix a word $\mathbf{w}=s_{a_1}\cdots s_{a_l}$ in $S^*$, the free monoid generated by $S$. Define a partial ordering $\prec$ of the index set $\{1,\ldots, l\}$ as follows: set $i\prec j$ if $i<j$ and $\{s_{a_i},s_{a_j}\}$ is an edge of $\Gamma$, and extend by transitivity. We denote by $\H({\mathbf{w}})$ this poset together with $\epsilon:i\mapsto s_{a_i}$. It is easy to see that this indeed forms a heap as in Definition~\ref{defi:heaps}. Also, words from the same commutation class are sent to isomorphic heaps.

\begin{proposition}[Viennot, \cite{ViennotHeaps}]
\label{prop:wordtoheap} 
 Let $\Gamma$ be a finite graph. The map $\mathbf{w}\to \H({\mathbf{w}})$ induces a bijection between $\Gamma$-commutation classes of words, and  finite $\Gamma$-heaps (up to isomorphism). 
\end{proposition}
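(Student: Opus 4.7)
The plan is to prove the proposition in three steps: the map is well-defined on commutation classes, it is surjective, and it is injective on classes.

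For well-definedness, suppose $\mathbf{w}'$ differs from $\mathbf{w}$ by one commutation move swapping entries at positions $i$ and $i+1$, with $s_{a_i}\ne s_{a_{i+1}}$ and $\{s_{a_i},s_{a_{i+1}}\}$ not an edge of $\Gamma$. Then $i$ and $i+1$ are incomparable in $\H(\mathbf{w})$, and the transposition $(i,i+1)$ defines a label-preserving poset isomorphism $\H(\mathbf{w})\to\H(\mathbf{w}')$. Iterating over an arbitrary sequence of commutations, the whole commutation class maps into a single isomorphism class of heaps.

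For surjectivity, given a heap $(H,\leq,\epsilon)$, I would choose any linear extension $x_1<x_2<\cdots<x_l$ of $H$ and form the word $\mathbf{w}:=\epsilon(x_1)\cdots\epsilon(x_l)$. I would then verify that the bijection $i\mapsto x_i$ identifies $\H(\mathbf{w})$ with $H$: the generating comparisons $i\prec j$ in $\H(\mathbf{w})$ (with $i<j$ and the two labels equal or joined by an edge in $\Gamma$) correspond exactly to the generating comparisons coming from the chains $H_s$ and $H_{\{s,t\}}$, so by condition~\eqref{it:heap2} of Definition~\ref{defi:heaps} the transitive closures on both sides coincide.

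For injectivity, suppose $\H(\mathbf{w})\cong\H(\mathbf{w}')$ via some isomorphism $\varphi$. The word $\mathbf{w}$ is recovered by reading labels along the identity linear extension of $\H(\mathbf{w})$; transporting this along $\varphi$ gives one linear extension of $\H(\mathbf{w}')$, while $\mathbf{w}'$ itself comes from another. I would then invoke the classical fact that any two linear extensions of a finite poset can be joined by a sequence of transpositions of adjacent incomparable elements. In a $\Gamma$-heap two elements are comparable whenever their labels are equal or form an edge of $\Gamma$, so any such adjacent incomparable pair must carry distinct labels forming a non-edge; each corresponding transposition on the word is therefore a valid commutation, and $\mathbf{w}\sim\mathbf{w}'$.

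The main technical point is the surjectivity check that $\H$ applied to a linear-extension word returns the original heap, which is essentially an unfolding of condition~\eqref{it:heap2}. The other ingredient needed is the standard ``elementary moves'' connectivity of linear extensions of a finite poset; this is well known and can simply be cited.
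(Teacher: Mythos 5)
Your proof is correct, and it follows the same route the paper indicates: the paper cites Viennot and only sketches the inverse map via linear extensions of the heap, which is exactly the construction you verify in your surjectivity step, while your injectivity argument (connectivity of linear extensions by adjacent transpositions of incomparable elements, which in a $\Gamma$-heap must carry non-adjacent distinct labels) is the standard way to complete it. In short, you have supplied in full the details the paper leaves to the reference, with no gaps.
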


The inverse bijection goes as follows: a \emph{linear extension} of a poset $H$ is a linear ordering $(h_1<\cdots <h_l)$ of the elements of $H$ such that $h_i<h_j$ implies $h_i\prec h_j$. Given a heap $(H,\epsilon)$ and a linear extension $h_1,\ldots,h_l$, construct the word $\epsilon(h_1)\cdots \epsilon(h_l)$  in $S^*$. Applying this to all such linear extensions gives the wanted commutation class.

In Figure~\ref{fig:wordheap}, we fix a (Coxeter) graph on the left, and we give two examples of words with the corresponding heaps. In the Hasse diagram of $\H(\mathbf{w})$, elements with the same labels will be drawn in the same column.

\begin{figure}[!ht]
\begin{center}
 \includegraphics[height=3cm]{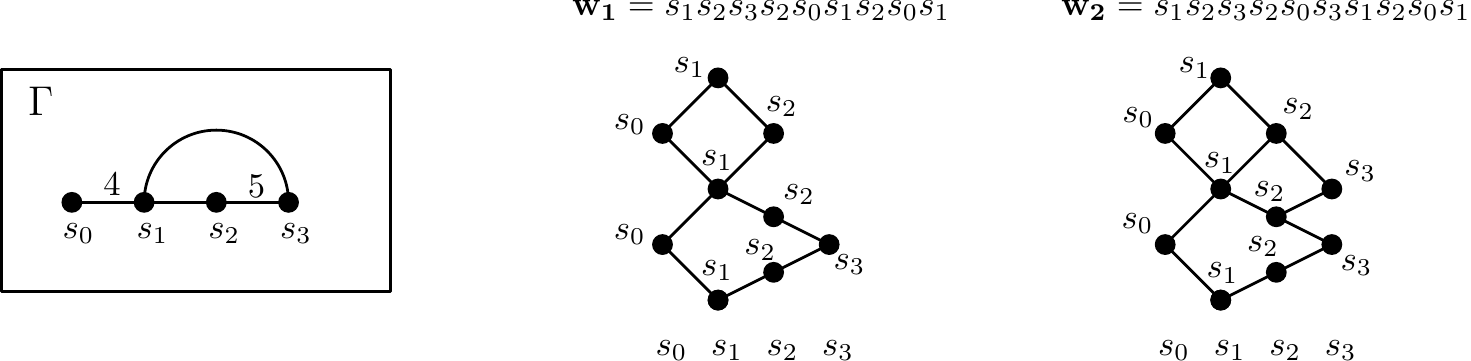}
\caption{\label{fig:wordheap} Two words and their respective heaps.}
\end{center}
\end{figure}

\subsubsection{Fully commutative heaps}
We now fix a Coxeter group $W$ with Coxeter diagram $\Gamma$. When $w$ is a fully commutative element, the heaps of its reduced words are all isomorphic by Proposition~\ref{prop:wordtoheap}, so we can give the following definition.

\begin{definition} If $w$ is a FC element and $\mathbf{w}$ is a reduced word for it, define $\H(w):=\H(\mathbf{w})$. Heaps of this form will be called {\em FC heaps}.
\end{definition}

For a FC heap $H=\H(w)$, linear extensions of $H$ are in bijection with reduced words for $w$. Say that a chain $i_1\prec \cdots \prec i_m$ in a poset $H$ is {\em convex} if the only elements $u$ satisfying $i_1\preceq u\preceq i_m$ are the elements $i_j$ of the chain. The next result gives an intrinsic characterization of {\em FC heaps}.

\begin{proposition}[Stembridge, \cite{St1}, Proposition 3.3]
\label{prop:heaps_fullycom}
Let $\Gamma$ be a Coxeter graph. A $\Gamma$-heap $H$ is FC if and only if the following two conditions are verified:
\begin{itemize}
 \item[$(a)$] there is no convex chain $i_1\prec\cdots\prec i_{m_{st}}$ in $H$ such that $\epsilon(i_1)=\epsilon(i_3)=\cdots=s$ and   $\epsilon(i_2)=\epsilon(i_4)=\cdots=t$ where $3\leq m_{st}<\infty$; 
 \item[$(b)$] there is no covering relation $i\prec j$ in $H$ such that $s_{i}=s_{j}$.
 \end{itemize}
\end{proposition}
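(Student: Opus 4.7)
For necessity, I would use that every linear extension of $\H(w)$ yields a reduced word for $w$ (since $w$ is FC, its reduced words form a single commutation class, which by Proposition~\ref{prop:wordtoheap} corresponds to the linear extensions of the heap). Violating $(b)$ gives a covering relation $i\prec j$ with $\epsilon(i)=\epsilon(j)=s$: there is a linear extension placing $i,j$ consecutively (nothing lies strictly between them), producing a reduced word with factor $ss$, a contradiction. Violating $(a)$ with a convex chain $i_1\prec\cdots\prec i_{m_{st}}$ alternately labeled $s,t$: by convexity, one can build a linear extension in which these $m_{st}$ elements appear consecutively, giving a reduced word with factor $\underbrace{sts\cdots}_{m_{st}}$, contradicting Proposition~\ref{prop:caracterisation_fullycom}.

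For sufficiency, the key auxiliary observation is a converse of the extension construction: if elements $x_1,\ldots,x_k$ of a heap appear consecutively, in that order, in some linear extension, and consecutive labels $\epsilon(x_i),\epsilon(x_{i+1})$ are always equal or adjacent in $\Gamma$, then $x_1<\cdots<x_k$ forms a convex chain in $H$. Indeed, the labeling condition forces each pair $x_i,x_{i+1}$ to be comparable in $H$, and any element lying strictly between $x_1$ and $x_k$ in the poset order must appear strictly between them in every linear extension, hence must be some $x_j$. Now suppose $H$ satisfies $(a)$ and $(b)$, pick a linear extension producing a word $\mathbf{w}$, and let $w$ be the element it represents. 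To show $\mathbf{w}$ is reduced, I would invoke Matsumoto's theorem in the form: a non-reduced word admits a sequence of commutation and braid moves that creates an $ss$ factor. In a shortest such sequence, either no braid moves occur---so a word in the commutation class of $\mathbf{w}$, i.e.\ another linear extension of $H$, contains $ss$, forcing by the auxiliary observation a covering relation violating $(b)$---or the word immediately before the first braid move lies in the commutation class of $\mathbf{w}$ and contains a braid factor $\underbrace{sts\cdots}_{m_{st}}$, yielding by the same observation a convex chain in $H$ violating $(a)$.

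Once $\mathbf{w}$ is known to be reduced, it remains to show $w$ is FC. By Proposition~\ref{prop:caracterisation_fullycom}, suppose for contradiction that some reduced word $\mathbf{u}$ for $w$ contains a braid factor. Since $\mathbf{u}$ and $\mathbf{w}$ are both reduced, Matsumoto's theorem connects them by a sequence of commutation and braid moves; the first word in this sequence that contains a braid factor---either $\mathbf{u}$ itself when the sequence uses no braid moves, or the word immediately preceding the first braid move otherwise---lies in the commutation class of $\mathbf{w}$ and thus has heap $H$, and the auxiliary observation again extracts a convex chain in $H$ forbidden by $(a)$. The main technical obstacle I anticipate is the convexity lemma together with the careful bookkeeping of Matsumoto's sequences required to locate, in both the reducedness step and the FC step, a word that simultaneously belongs to the commutation class of $\mathbf{w}$ and exhibits a forbidden factor; once this is set up, the rest of the argument is a direct translation between words and heaps.
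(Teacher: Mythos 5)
Your proof is correct, and since the paper does not prove this proposition itself (it simply cites Stembridge's Proposition 3.3), there is nothing to diverge from: your argument is essentially the standard one, built on the same ingredients Stembridge uses, namely the correspondence between commutation classes and linear extensions of the heap together with the word property. Two points deserve tightening. First, the fact you invoke as ``Matsumoto's theorem'' --- that a non-reduced word can be transformed by commutation and braid moves alone into a word containing a factor $ss$ --- is strictly stronger than the Matsumoto property as stated in the paper (which concerns only reduced words); it is Tits' solution of the word problem (e.g.\ Bj\"orner--Brenti, Theorem 3.3.1(i): any expression reaches a reduced one via braid moves and deletions of $ss$, so the word just before the first deletion is in the required form), so you should cite that result rather than Matsumoto. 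Second, the two poset facts you use in the necessity direction --- that a covering pair, and more generally a convex chain $i_1\prec\cdots\prec i_m$, can be made to appear consecutively in some linear extension --- merit a one-line justification: list first a linear extension of the elements outside the chain lying below some $i_j$, then the chain, then the rest; convexity (respectively the covering property) is exactly what guarantees no order relation is violated. Your auxiliary ``converse'' observation in the sufficiency direction is stated and used correctly, and the bookkeeping of the first long braid move in the sequences of moves is sound, so with these small repairs the proof is complete.
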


The heap on the right of Figure~\ref{fig:wordheap} is a FC heap, whereas the one on the left is not since it contains the convex chain with labels $(s_2,s_1,s_2)$ while $m_{s_1 s_2}=3$.

\begin{corollary} Let $(W,S)$ be a Coxeter system with Coxeter graph $\Gamma$. FC
elements of $W$ are in bijection with $\Gamma$-heaps verifying the two conditions of Proposition~\ref{prop:heaps_fullycom}. 
 \end{corollary}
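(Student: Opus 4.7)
The plan is to assemble this corollary from the three preceding propositions. The forward map is immediate: if $w$ is FC, then by Definition~\ref{defi:FC} the set $\mathcal{R}(w)$ is a single $\Gamma$-commutation class, so Proposition~\ref{prop:wordtoheap} attaches to it a well-defined heap $\H(w)$. By Proposition~\ref{prop:heaps_fullycom} this heap satisfies conditions $(a)$ and $(b)$, because any reduced expression for $w$ is its linear extension and would expose a forbidden braid or squared factor otherwise.

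For the reverse direction, start from a $\Gamma$-heap $H$ satisfying $(a)$ and $(b)$. Proposition~\ref{prop:wordtoheap} gives back a commutation class $C(H)$ of words in $S^*$. Since commutation relations are legitimate Coxeter relations, every word in $C(H)$ represents the same element $w \in W$. The two substantial things to verify are (i) that each word $\mathbf{w} \in C(H)$ is reduced and (ii) that $C(H)$ is exactly $\mathcal{R}(w)$, whence $w$ is FC.

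For (i), I would argue by contradiction: if some $\mathbf{w}\in C(H)$ were non-reduced, then by Matsumoto's theorem and the Tits solution to the word problem (see~\cite{Humphreys}), one can transform $\mathbf{w}$ by a sequence of braid and commutation moves into a word containing an adjacent factor $ss$, which can then be cancelled. Performing the initial commutation moves keeps us inside $C(H)$, and the first non-commutation move in the sequence must be a braid move $\underbrace{sts\cdots}_{m_{st}}\to \underbrace{tst\cdots}_{m_{st}}$ applied to some word of $C(H)$. By the Viennot correspondence, the heap $H$ would then contain a convex chain of alternating labels $s,t$ of length $m_{st}$, violating $(a)$. Alternatively, if $\mathbf{w}$ already contains an adjacent factor $ss$, this factor corresponds to a covering relation of two elements of $H$ having the same label, contradicting $(b)$. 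Hence all words in $C(H)$ are reduced.

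For (ii), by the Matsumoto property $\mathcal{R}(w)$ is the union of commutation classes connected to $C(H)$ by braid moves; but $(a)$ forbids any braid factor from appearing in any word of $C(H)$, so $\mathcal{R}(w)=C(H)$. Hence $w$ is FC and its heap is $H$. The maps $w\mapsto \H(w)$ and $H\mapsto w$ are inverse to each other, so the bijection follows. The only genuinely delicate point is (i), which requires invoking the standard word-problem result for Coxeter groups to rule out non-reducedness from the purely local conditions $(a)$ and $(b)$ on the heap.
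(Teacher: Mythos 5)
Your argument is correct, but it takes a more self-contained route than the paper, which treats this corollary as an immediate consequence of the two results it has just quoted: Proposition~\ref{prop:wordtoheap} identifies commutation classes with heaps, and Proposition~\ref{prop:heaps_fullycom} (Stembridge's Proposition 3.3, cited without proof) already asserts the full equivalence ``$H$ is an FC heap $\iff$ $H$ satisfies $(a)$ and $(b)$'', so both directions of the bijection follow by simply unwinding definitions. What you do differently is to re-prove the nontrivial implication of Proposition~\ref{prop:heaps_fullycom} -- that a heap satisfying $(a)$ and $(b)$ really is $\H(w)$ for an FC element $w$ -- by invoking Tits' solution of the word problem: a non-reduced word can be carried by braid and commutation moves to one containing a factor $ss$, and you check that a contiguous braid factor (resp.\ a factor $ss$) in a word of the commutation class yields a convex alternating chain violating $(a)$ (resp.\ a covering relation with equal labels violating $(b)$), the key point being that heap order refines word position, so contiguous factors give convex chains. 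This buys a proof that does not rely on Stembridge's result as a black box, at the cost of importing the word-problem machinery. One small imprecision: your dichotomy ``first non-commutation move is a long braid move'' versus ``$\mathbf{w}$ already contains $ss$'' omits the case where the factor $ss$ appears only after some commutation moves with no long braid move ever applied; since those moves stay inside $C(H)$ and your covering-relation argument applies verbatim to any word of the class, this is a phrasing issue rather than a gap.
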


In the next section we will exhibit a class of heaps which play an important role for the affine Coxeter systems we will study.
 
 %%%%%%%%%%%%%%%%%%%%%%%%%%%%%%%%%%%%%%
\subsection{Alternating heaps and walks}
\label{sub:alternating}
%%%%%%%%%%%%%%%%%%%%%%%%%%%%%%%%%%%%%%%%

\begin{definition}
\label{defi:alternating}
Consider a graph $\Gamma$, and a $\Gamma$-heap $H$. We say that $H$ is {\em alternating} if for each edge $\{s,t\}$ of $\Gamma$, the chain $H_{\{s,t\}}$ has alternating labels $s$ and $t$ from bottom to top.
\end{definition}

A word $\mathbf{w} \in S^*$ is \emph{alternating} if, for each edge $\{s,t\}$, the occurrences of $s$ alternate with those of $t$. Clearly $\mathbf{w}$ is alternating if and only if $\H(\mathbf{w})$ is.
\medskip

Now let $\Gamma=\Gamma(W,S)$ be a Coxeter graph. Say that a FC element $w$ is alternating if $\H(w)$ is. By using Proposition~\ref{prop:heaps_fullycom} we obtain the following characterization. 

\begin{proposition}\label{prop:altFCheaps}
Let $H$ be an alternating $\Gamma$-heap. Then H is FC if and only if, for each vertex $s \in\Gamma$ of degree $1$, $H$ has at most one $s$-element or $m_{st}>3$ for the unique $t$ adjacent to $s$.
\end{proposition}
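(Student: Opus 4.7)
The plan is to apply Stembridge's intrinsic criterion (Proposition~\ref{prop:heaps_fullycom}) and show that, under the alternating hypothesis, its two conditions collapse to the statement above. Condition~$(b)$---no covering relation between equal-label elements---comes for free: for any vertex $s$ with a neighbour $t$ in $\Gamma$, two consecutive elements $s^{(i)}<s^{(i+1)}$ of $H_s$ have a $t$-element lying strictly between them in the alternating chain $H_{\{s,t\}}$, so no covering relation of this form can occur.

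For condition~$(a)$, my key reduction is that any convex alternating chain $i_1<\cdots<i_{m_{st}}$ in $H$ lies inside $H_{\{s,t\}}$ and is automatically convex there, so it consists of $m_{st}$ consecutive entries of the totally ordered alternating chain $H_{\{s,t\}}$. If $s$ has a second neighbour $r\neq t$, then in any such chain of length $\geq 3$ the first and third entries $i_1,i_3$ are consecutive elements of $H_s$; the alternating chain $H_{\{s,r\}}$ then inserts an $r$-labelled element strictly between them in $H$, breaking convexity. The symmetric argument on the chain starting with $t$ shows that vertices of degree $\geq 2$ impose no obstruction whatsoever, and attention reduces to degree-$1$ vertices.

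The heart of the argument is the following confinement claim, which I would prove by chasing saturated chains: if $s$ is of degree $1$ with unique neighbour $t$, and $s^{(k)}<s^{(k+1)}$ are consecutive in $H_s$ with $t^{(\ell)}$ the unique $t$-element between them in $H_{\{s,t\}}$, then $(s^{(k)},t^{(\ell)},s^{(k+1)})$ is convex in $H$. A hypothetical intruder $u$ with $s^{(k)}<u<s^{(k+1)}$ and $\epsilon(u)\notin\{s,t\}$ admits, by the saturated-chain description following Definition~\ref{defi:heaps}, a sequence $s^{(k)}=x_0<x_1<\cdots<x_a=u$ with consecutive labels equal or adjacent in $\Gamma$; since $\deg(s)=1$, the label of $x_1$ must lie in $\{s,t\}$. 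Consecutiveness in $H_s$ rules out $\epsilon(x_1)=s$, and alternation in $H_{\{s,t\}}$ pins $x_1$ to the unique $t^{(\ell)}$, whence $t^{(\ell)}\leq u$. The symmetric argument on a chain from $u$ up to $s^{(k+1)}$ yields $u\leq t^{(\ell)}$, so $u=t^{(\ell)}$, contradicting $\epsilon(u)\neq t$.

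Granted the claim, both directions of the proposition fall out by bookkeeping. If some degree-$1$ vertex $s$ has $|H_s|\geq 2$ and $m_{st}=3$, the claim produces a convex chain of length $3=m_{st}$ in $H$, violating~$(a)$, so $H$ is not FC. Conversely, under the stated condition, vertices of degree $\geq 2$ have been dealt with, and for a degree-$1$ vertex $s$ either $|H_s|\leq 1$ (ruling out any length-$m_{st}\geq 3$ chain starting at $s$, which would need two $s$-elements), or $m_{st}\geq 4$, in which case any length-$m_{st}$ chain contains two consecutive $t$-elements of $H_t$, and the degree of $t$ supplies an out-of-interval label that breaks convexity. The main obstacle I expect is the confinement claim: it is the only step that genuinely uses the transitive-closure description of the heap order, and it hinges on $s$ being a ``dead end'' of~$\Gamma$.
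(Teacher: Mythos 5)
Your proof is correct and follows essentially the route the paper intends: the statement is derived directly from Stembridge's criterion (Proposition~\ref{prop:heaps_fullycom}), with alternation disposing of condition $(b)$ and confining any potential bad chain to a chain $H_{\{s,t\}}$ at a degree-one vertex, which is exactly what your confinement claim makes explicit. One small caveat, which concerns the statement as much as your argument: your converse uses that the unique neighbour $t$ of a degree-one vertex $s$ itself has degree at least $2$, which holds for every graph considered in the paper but fails for a component consisting of a single edge with $m_{st}\geq 4$ (where the alternating chain heap of size $m_{st}$ satisfies the stated condition yet is not FC).
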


In the rest of this section, we fix $m_{v_0v_1},m_{v_1v_2},\ldots, m_{v_{n-1}v_{n}}$ in the set $\{3,4,\ldots\}\cup\{\infty\}$ and we consider the Coxeter system $(W,S)$ corresponding to the {\em linear} Coxeter graph $\Gamma_n=\Gamma_n((m_{v_iv_{i+1}})_i)$ of Figure~\ref{fig:linear_diagram}.

\begin{figure}[!ht]
\begin{center}
\includegraphics{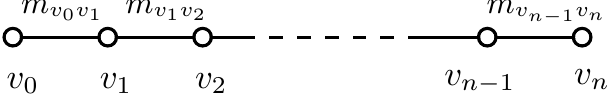}
\caption{\label{fig:linear_diagram} The linear Coxeter graph $\Gamma_n$.}
\end{center}
\end{figure}

An alternating heap is presented in Figure~\ref{fig:wordheappath}. It is FC if and only if $m_{01},m_{n-1n}>3$ by Proposition~\ref{prop:altFCheaps}. The advantage of having alternating heaps in the case of linear diagrams is that they have a nice encoding by walks which we will explore in the sequel.

\begin{figure}[!ht]
\begin{center}
\includegraphics[width=0.7\textwidth]{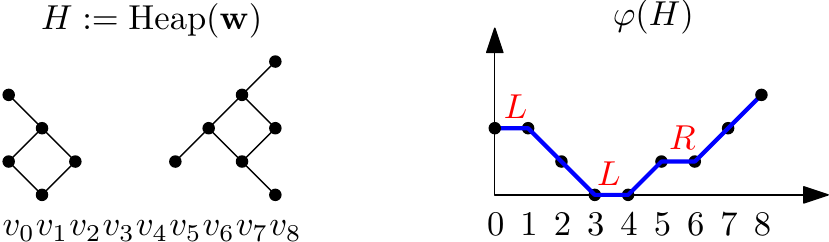}
\caption{\label{fig:wordheappath} The heap $H$ corresponding to the alternating word $\mathbf{w}=v_8v_5v_1v_0v_7v_8v_6v_2v_7v_1v_0v_8$ and its encoding by a walk.}
\end{center}
\end{figure}

\begin{definition}[Walks]
\label{defi:walks}
 A {\em walk} of length $n$ is a sequence $P=(P_0,P_1,\ldots,P_n)$ of points in $\mathbb{N}^2$ with its $n$ steps in the set $\{(1,1),(1,-1),(1,0)\}$, such that $P_0$ has abscissa $0$ and all horizontal steps $(1,0)$ are labeled either by $L$ or $R$. A walk is said to satisfy condition $(*)$ if all horizontal steps of the form $(i,0)\rightarrow (i+1,0)$ have label $L$. 
\end{definition}

The set of all walks of length $n$ will be denoted by $\Genset_n$. The subset of walks ending at $P_n=(n,0)$ will be denoted by $\Posset_n$, and the subset of $\Posset_n$ with $P_0=(0,i)$ will be denoted by $\Motzbicset^{(i)}_n$. For short we will write $\Motzbicset_n=\Motzbicset_n^{(0)}$ for walks starting and ending on the $x$-axis. To each family $\mathcal{F}_n \subseteq \Genset_n$ corresponds subfamilies $\mathcal{F}_n^*\subseteq\mathcal{F}_n$ consisting of those walks in $\mathcal{F}_n$ which satisfy the condition $(*)$, and $\touch{\mathcal{F}_n}\subseteq\mathcal{F}_n$ consisting of those walks which hit the $x$-axis at some point. %\smallskip

\begin{remark}
\label{rem:MotzToDyck}
Write $U,D$ to represent steps $(1,1),(1,-1)$, and $L,R$ to represent steps $(1,0)$ with these labels; then we can encode our walks by the data of $P_0$ and a word in $\{U,D,L,R\}$. Consider the injective transformation on such words defined by  $U\mapsto UU,D\mapsto DD,L\mapsto UD,R\mapsto DU$.  It is well known and easy to prove that this transformation restricts to a bijection from $\Motzset_n$ to {\em length $2n$ Dyck walks}, i.e. walks from $(0,0)$ to $(2n,0)$ with steps $U,D$ staying above the $x$-axis. %However we will not use this transformation since the statistic that we define now is more natural on the original walks.
%\smallskip
\end{remark}

  The \emph{total height} $\haut$ of a walk is the sum of the heights of its points: if $P_i=(i,h_i)$ then $\haut(P)=\sum_{i=0}^n h_i$. To each family $\mathcal{F}_n \subseteq \Genset_n $ we associate the series $F_n(q)=\sum_{P\in \mathcal{F}_n}q^{\haut(P)}$, and we define the generating functions in the variable $x$ by 
  
\[ F(x)=\sum_{n\geq 0} F_n(q) x^n, \quad F^*(x)=\sum_{n\geq 0} F^*_n(q) x^n\quad {\rm and} \quad \touch{F}(x)=\sum_{n\geq 0}\touch{ F}_n(q) x^n.\]

We now define a bijective encoding of alternating heaps by walks, which will be especially handy to compute generating functions in the next sections.

\begin{definition}[Map $\varphi$] 
\label{def:map_phi} Let $H$ be an alternating heap of type $\Gamma_n$. To each vertex  $v_i$ of $\Gamma_n$ we associate the point $P_i=(i,|H_{v_i}|)$. If $|H_{v_{i}}|=|H_{v_{i+1}}|>0$, we label the corresponding step by $L$ (\emph{resp.} $R$) if the lowest element of the chain $H_{\{v_i,v_{i+1}\}}$ has label $v_{i+1}$ (\emph{resp. $v_i$}). If $|H_{v_{i+1}}|=|H_{v_i}|=0$, we label the $i$th step by $L$. We define $\varphi(H)$ as the walk $(P_0,P_1,\ldots,P_n)$ with its possible labels.
\end{definition}

This is illustrated in Figure~

\begin{theorem}
\label{theorem:walk_encoding}
The map $H\mapsto \varphi(H)$ is a bijection between alternating heaps of type $\Gamma_n$ and $\Genset^*_n$. The size $|H|$ of the heap is the total height of $\varphi(H)$.
\end{theorem}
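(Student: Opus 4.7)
The plan is to show that $\varphi$ takes values in $\Genset^*_n$, construct an explicit inverse $\psi$, check that both compositions are the identity, and read off the height identity at the end.

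\emph{Well-definedness.} For each edge $\{v_i,v_{i+1}\}$ of $\Gamma_n$, the chain $H_{\{v_i,v_{i+1}\}}$ alternates, so the numbers of $v_i$- and $v_{i+1}$-labels differ by at most one. Hence $\bigl||H_{v_i}|-|H_{v_{i+1}}|\bigr|\le 1$ and every step of $\varphi(H)$ lies in $\{U,D,L,R\}$. Condition $(*)$ holds automatically because a horizontal step of $\varphi(H)$ at height $0$ can only come from $H_{v_i}=H_{v_{i+1}}=\emptyset$, and the convention in Definition~\ref{def:map_phi} labels it by $L$.

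\emph{Inverse map and main obstacle.} Given $W=(P_0,\ldots,P_n)\in\Genset^*_n$ with heights $h_i$ and step labels, I would build a candidate heap $\psi(W)$ by placing $h_i$ chain elements $v_i^{(1)}<\cdots<v_i^{(h_i)}$ in each column and imposing on $H_{\{v_i,v_{i+1}\}}$ the alternating total order whose minimum is $v_{i+1}$ when step $i$ is $U$ or horizontal-$L$, and $v_i$ when step $i$ is $D$ or horizontal-$R$ (no relation being imposed when that chain is empty). The set $\psi(W)$ is then taken to be the transitive closure of all these chains. The crux of the proof is to check that this transitive closure is antisymmetric. For this I would define integers $c_i$ recursively by $c_0:=0$ and $c_{i+1}:=c_i-1$ (resp.\ $c_{i+1}:=c_i+1$) when step $i$ is of type $U$ or $L$ (resp.\ $D$ or $R$), and put $f(v_i^{(j)}):=2j+c_i$. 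A short case-check on the four step types shows that $f$ strictly increases along every imposed chain, so any hypothetical cycle $x_1<\cdots<x_k=x_1$ would yield $f(x_1)<f(x_1)$, a contradiction. Once antisymmetry is established, $\psi(W)$ is a genuine poset satisfying Definition~\ref{defi:heaps}: the total orders on $H_{v_i}$ and on $H_{\{v_i,v_{i+1}\}}$ hold by fiat, and alternation holds by design.

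\emph{Conclusion.} The compositions $\varphi\circ\psi$ and $\psi\circ\varphi$ are the identity, since a walk in $\Genset^*_n$ is exactly encoded by its column heights and step labels, while an alternating $\Gamma_n$-heap is determined by its column sizes $(|H_{v_i}|)_i$ together with the alternation order of each chain $H_{\{v_i,v_{i+1}\}}$ (with the non-edge chains being irrelevant, as required pairwise orderings are recovered from transitive closure). The size identity $|H|=\haut(\varphi(H))$ then follows immediately from $|H|=\sum_i|H_{v_i}|=\sum_i h_i$.
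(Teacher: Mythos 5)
Your proof is correct and follows essentially the same route as the paper: check the steps are legal, then invert $\varphi$ by imposing the alternating chain on each $H_{\{v_i,v_{i+1}\}}$ dictated by the step type and taking the transitive closure. The only difference is that where the paper dismisses acyclicity with ``it is acyclic since $\Gamma_n$ is linear,'' you verify it explicitly via the increasing potential $f(v_i^{(j)})=2j+c_i$, which is a welcome fleshing-out of the same argument rather than a new approach.
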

\begin{proof}
The map is clearly well defined since for any alternating heap $H$ and any $i\in\{0,\dots,n-1\}$, we have $-1 \leq |H_{v_i}| - |H_{v_{i+1}}| \leq 1$. Fix 
$(P_0,P_1,\ldots,P_n) \in \mathcal{G}_n^*$. If the step $P_i=(i,h_i) \rightarrow P_{i+1}=(i+1,h_{i+1})$ is equal to $(1,1)$ (\emph{resp.} $(1,-1)$), then we define a convex chain $\mathcal{C}_i$ of length $2h_{i}$ (\emph{resp.} $2h_{i+1}$) as $(v_{i+1},v_{i},\ldots, v_{i+1})$  (\emph{resp.} $(v_i,v_{i+1},\ldots, v_i)$).
If the step $P_i\rightarrow P_{i+1}$  is labeled by $L$ (\emph{resp.} $R$), then we define a convex chain  $\mathcal{C}_i$ of length $2h_i=2h_{i+1}$  as  $ (v_i,v_{i+1},\ldots, v_i,v_{i+1})$ (\emph{resp.} $(v_{i+1},v_{i},\ldots, v_{i+1},v_i)$). Next we define $H$ as the transitive closure of the chains $\mathcal{C}_0, \ldots,\mathcal{C}_{n-1}$. It is acyclic since $\Gamma_n$ is linear, so we get a heap $H$ which is uniquely defined, alternating, and satisfies $\varphi(H)=(P_0,P_1,\ldots,P_n)$. Since $h_i=|H_{v_i}|$ the result follows.
\end{proof}

%%%%%%%%%%%%%%%%%%%%%%%%%%
\subsection{Finite and affine Coxeter groups}
\label{sub:finiteaffine}
%%%%%%%%%%%%%%%%%%%%%%%%%%

The irreducible Coxeter systems corresponding to finite and affine Coxeter groups are completely classified (see~\cite{BjorBrenbook,Humphreys}), and the corresponding graphs are depicted in Figures~\ref{fig:finite_diagrams} and ~\ref{fig:affinediagrams}.

\begin{figure}[!ht]
\begin{center}
\includegraphics[width=\textwidth]{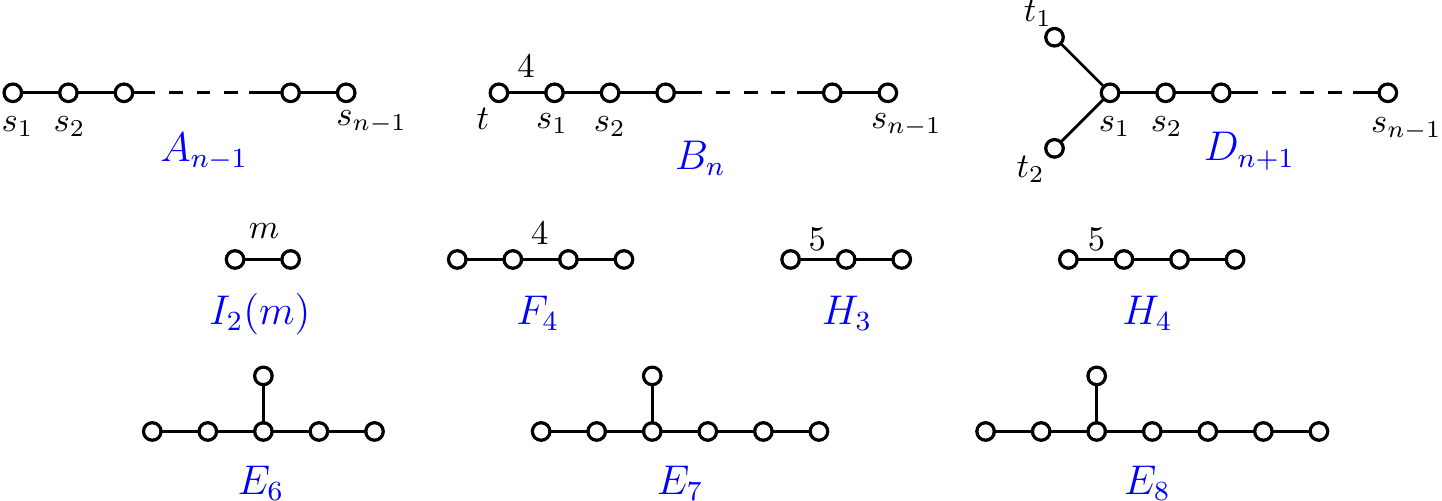}
\caption{\label{fig:finite_diagrams} Coxeter graphs for all irreducible finite types.}
\end{center}
\end{figure}
\begin{figure}[!ht]
\begin{center}
 \includegraphics[width=\textwidth]{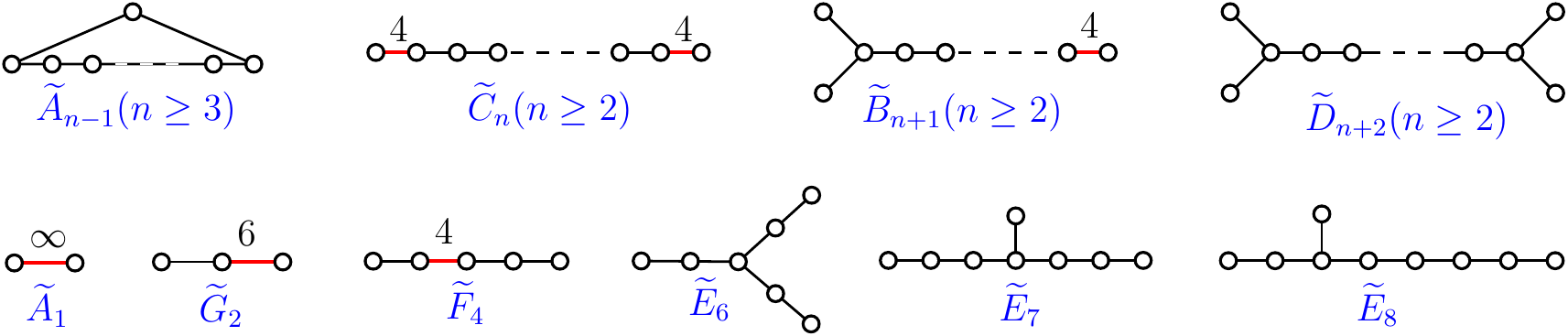}
 \caption{\label{fig:affinediagrams} Coxeter graphs for all irreducible affine types.}
\end{center}
\end{figure}

 For each Coxeter group $W$ corresponding to one of these types, we denote by $W^{FC}$ its set of FC elements and we define the generating function
 \[
 W^{FC}(q):=\sum_{w \in W^{FC}} q^{\ell(w)}.
\] 

The sequence of coefficients of the above series will be called the {\em growth sequence} of $W^{FC}$.

Section \ref{sec:a} deals with groups of types $\widetilde{A}_{n-1}$ and $A_{n-1}$. Sections~\ref{sec:bcd} and \ref{sec:enum} deal with the rest of the \emph{classical types}, which are the remaining infinite families:  it corresponds to affine types $\aff{C}_n,\aff{B}_{n+1},\aff{D}_{n+2}$ and finite types $B_n$ and $D_{n+1}$. The last remaining cases, which are the so-called \emph{exceptional types}, are studied in Section~\ref{sec:excep}, while Section~\ref{sec:further} will explore further works and questions related to our results, among which the application to Temperley--Lieb algebras.

%%%%%%%%%%%%%%%%%%%%%%%%%%%%%%%%%%%%%%%%%%
\section[Type A affine]{Types $\widetilde{A}_{n-1}$ and $A_{n-1}$}
\label{sec:a}
%%%%%%%%%%%%%%%%%%%%%%%%%%%%%%%%%%%%%%%%%%

In this section we study the case of FC elements in $W$ of type $\widetilde{A}_{n-1}$ and $A_{n-1}$. These correspond to the affine symmetric group and the symmetric group, respectively. As such, FC elements  were studied as they correspond to the so-called $321$-avoiding permutations in both cases, and the length function corresponds to the number of (affine) inversions.

\begin{center}
\includegraphics[width=0.2\textwidth]{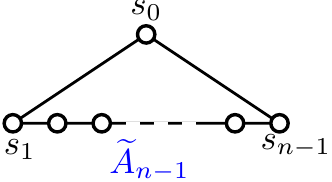}
\end{center}

We will here study FC elements through the use of heaps in the case $\widetilde{A}_{n-1}$, and give a nice characterization of them in terms of certain lattice walks. We assume $n\geq 3$ everywhere (the special case $\aff{A}_1$ is trivial as all its elements are FC).

\subsection{Affine permutations}

The generating function $\Aaff_{n-1}^{FC}(q)$ was first computed by Hanusa and Jones in~\cite{HanJon}. They use the following well-known realization of the Coxeter system of type $\Aaff_{n-1}$ as {\em affine permutations}. Consider the set of bijective transformations $\sigma:\mathbb{Z}\rightarrow\mathbb{Z}$ such that $\sigma(i+n)=\sigma(i)+n$ for all $i\in\mathbb{Z}$, as well as the normalization condition $\sum_{i=1}^n\sigma(i)=\sum_{i=1}^ni$. They form a group $\widetilde{S}_n$ under composition. It is known (see \cite{LusztigTransactions}) that this group is isomorphic to the Coxeter system of type $\Aaff_{n-1}$, via the only morphism extending $s_i\mapsto ((i,i+1))$ for $i=0,1,\ldots,n-1$, where $((i,i+1))$ is the affine permutation exchanging $i+kn$ and $i+1+kn$ for all $k\in\mathbb{Z}$.

Under this isomorphism, it was shown by Green in~\cite{Gre321} that FC elements correspond to $321$-avoiding permutations, which are the permutations $\sigma$ with no $i<j<k$ in $\mathbb{Z}$ satisfying $\sigma(i)>\sigma(j)>\sigma(k)$. This generalizes the well-known result of Billey, Jockush and Stanley from~\cite{BJS} for the case of the symmetric group.  

\subsection{Characterization}

We first use the following proposition, essentially proved in Green's paper~\cite{Gre321}.

\begin{proposition}
\label{prop:caracterisation_Atilde} An element $w$ of type $\Aaff_{n-1}$ is fully commutative if and only if, in  any reduced decomposition of $w$, the occurrences of $s_i$ and $s_{i+1}$ alternate for all $i\in \{0,\ldots,n-1\}$, where we set $s_n=s_0$.
\end{proposition}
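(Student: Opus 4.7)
My plan rests on two observations specific to type $\Aaff_{n-1}$. First, by Stembridge's criterion (Proposition~\ref{prop:caracterisation_fullycom}), $w$ is FC if and only if no reduced expression contains a factor $s_j s_{j+1} s_j$ or $s_{j+1} s_j s_{j+1}$ for some $j$, these being the only braid factors (indices taken modulo $n$). Second, every commutation relation in $\Aaff_{n-1}$ swaps two non-adjacent generators, hence it never alters the $\{s_i,s_{i+1}\}$-subword of a word, for any $i$. Combining these, one can freely shuffle any reduced expression via commutations without disturbing the alternation properties under consideration.

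For the direction ``not FC $\Rightarrow$ some pair fails to alternate'', I argue contrapositively. If $w$ is not FC then some $\mathbf{w}\in\mathcal{R}(w)$ contains, say, the factor $s_j s_{j+1} s_j$. Looking at the $\{s_{j-1},s_j\}$-subword of $\mathbf{w}$, this factor contributes two consecutive $s_j$'s (since $s_{j+1}\notin\{s_{j-1},s_j\}$), so that subword fails to alternate; the case $s_{j+1} s_j s_{j+1}$ is symmetric, using the pair $\{s_{j+1},s_{j+2}\}$ instead.

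For the converse, suppose $w$ is FC yet some $\mathbf{w}\in\mathcal{R}(w)$ has a non-alternating $\{s_i,s_{i+1}\}$-subword, with two consecutive $s_i$'s appearing at positions $p<q$ in $\mathbf{w}$; I choose the closest such pair. Then no letter strictly between positions $p$ and $q$ is $s_i$ or $s_{i+1}$, and among the remaining generators only $s_{i-1}$ fails to commute with $s_i$. If no $s_{i-1}$ appears in $(p,q)$, commutations slide the two $s_i$'s together to produce $s_i s_i$, contradicting reducedness of $\mathbf{w}$. If exactly one $s_{i-1}$ appears, the same sliding produces a factor $s_i s_{i-1} s_i$, contradicting the first observation since $w$ is FC. Otherwise, at least two $s_{i-1}$'s lie in $(p,q)$, and any two consecutive among them form a pair of consecutive same-labeled occurrences in the $\{s_{i-1},s_i\}$-subword of $\mathbf{w}$ (no letter of this pair intervenes, by assumption on $(p,q)$), with their positions strictly inside $(p,q)$. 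We then iterate the whole analysis for the pair $(s_{i-1},s_i)$.

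The main obstacle is termination of this iteration. Although the index cycle $(s_i, s_{i+1}) \mapsto (s_{i-1}, s_i) \mapsto \cdots$ is periodic in $\Aaff_{n-1}$, the enclosing interval $(p,q)$ of positions strictly shrinks at every step; since it is a pair of integer positions in the finite word $\mathbf{w}$, it can shrink only finitely often, forcing termination in one of the two direct-contradiction cases above. The subtle point to verify carefully is that the ``consecutive same-labeled occurrences'' identified at each iteration remain consecutive in $\mathbf{w}$ itself and not merely in some commutation-shuffled variant of it, which is precisely where the commutation-invariance of the relevant subwords in $\Aaff_{n-1}$ is used.
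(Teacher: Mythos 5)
Your proof is correct and follows essentially the same route as the paper: sufficiency via Stembridge's criterion (Proposition~\ref{prop:caracterisation_fullycom}) applied to a neighboring pair, and necessity via the same three-case analysis on the number of $s_{i-1}$'s between two offending $s_i$'s, with the non-reduced word, the braid factor $s_is_{i-1}s_i$, and the descent to a new pair. The only cosmetic difference is that the paper achieves termination by choosing $k-j$ minimal over all witnesses at the outset, whereas you run the same descent explicitly via the strictly shrinking interval $(p,q)$; these are interchangeable, and your worry about commutation-shuffled variants is moot since your new pairs are always located in the original word $\mathbf{w}$.
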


\begin{proof}
 The condition is clearly sufficient by using Proposition~\ref{prop:caracterisation_fullycom}. To show that it is necessary, assume $w \in \Aaff^{FC}_{n-1}$ and $\mathbf{w}=s_{i_1}\cdots s_{i_l}$ is a reduced word for $w$. For the sake of contradiction, let us assume that there exist $i\in \{0,\ldots,n-1\}$ and $1\leq j<k\leq l$ such that $i_j=i_k=i$ and that for all $x$ satisfying $j<x<k$ one has $i_x\neq {i},{i+1}$. Among all possible integers $i,j,k$, pick one with $k-j$ minimal. Now consider the number $m$ of indices $x$ with $j<x<k$ and $i_x=i-1$. If $m=0$, then by successive commutations we see that the word is not reduced, which is excluded. If $m=1$, then by successive commutations one obtains a factor $s_is_{i-1}s_i$ which is excluded by Proposition~\ref{prop:caracterisation_fullycom}. If $m\geq 2$, then we have two consecutive occurrences of $s_{i-1}$ contradicting the minimality of $k-j$. This finishes the proof.
\end{proof}

This shows that FC heaps for the graph of type $\Aaff_{n-1}$ coincide exactly with alternating heaps; we will use this characterization to encode these heaps as certain paths in Section~\ref{sub:path_encoding}.

We shall represent heaps of type $\aff{A}_{n-1}$ by depicting all (alternating) chains $H_{\{s_i,s_{i+1}\}}$ for $i=0,\ldots,n-1$. To be able to represent these chains in a planar fashion, we duplicate the set of $s_0$-elements and use one copy for the depiction of the chain $H_{\{s_0,s_{1}\}}$ and one copy for $H_{\{s_{n-1},s_0\}}$. This can be seen in Figure~\ref{fig:AtildeHeap_slanted}, where the ``drawn on a cylinder" representation on the right is a deformation of the first one which makes more visible its poset structure.

\begin{figure}[!ht]
\begin{center}
 \includegraphics[width=0.6\textwidth]{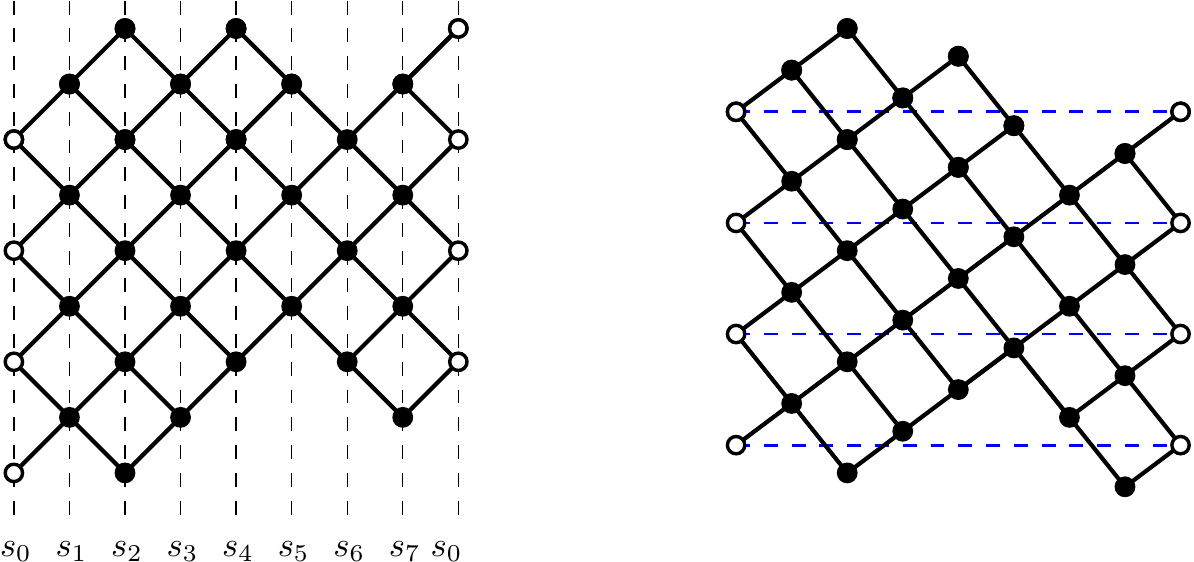}
 \caption{\label{fig:AtildeHeap_slanted} Representation of a FC heap of type $\aff{A}_7$.}
 \end{center} 
\end{figure}

\subsection{Path encoding}
\label{sub:path_encoding}

We are not exactly in the case of Section~\ref{sub:alternating} since the Coxeter graph is not linear. One can nonetheless define walks from the alternating heaps described in Proposition~\ref{prop:caracterisation_Atilde}, as follows: given a FC heap $H$ of type $\Aaff_{n-1}$ and $i=0,\ldots,n-1$,  draw a step from $P_i=(i,|H_{s_i}|)$ to $P_{i+1}=(i+1,|H_{s_{i+1}}|)$ as in Definition~\ref{def:map_phi} for $\varphi$; here we set $s_n=s_0$. This forms a path $\varphi'(H)$ of length $n$, with both $P_0$ and $P_n$ at height $|H_{s_0}|$. If $w \in \Aaff_{n-1}^{FC}$, we can set $\varphi'(w):=\varphi'(\H(w))$ since we showed that all FC heaps are alternating.

\begin{figure}[!ht]
\begin{center}
 \includegraphics[width=\textwidth]{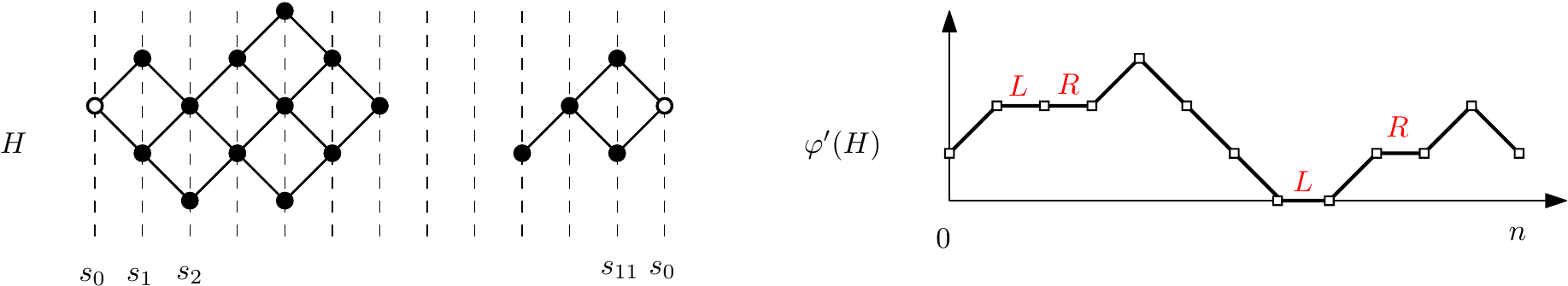}
 \end{center} 
\end{figure}

Define accordingly $\Cylset_{n}$ as the paths in $\Genset_{n}$ whose starting and ending points are at the same height. Define also $\haut'(P)=\sum_{i=0}^{n-1}\haut(P_i)$, which corresponds to the area under $P$, and let $\Cyl_n(q)$ and $\Cyl_n^*(q)$ be the generating functions with respect to $\haut'$ of $\Cylset_{n}$ and $\Cylset_{n}^*$ respectively. Finally, denote by $\E_n\subseteq \Cylset_{n}^*$ the set of walks with all vertices at the same positive height, and all $n$ steps with the same label (either $L$ or $R$).

%For any $h>0$ and $X\in\{L,R\}$, consider the walk in $\Cylset_{n}^*$ with all vertices at height $h$ and all steps of type $X$; let $\E_n$ be the set of all such walks.% for all $h>0$ and $X$.

\begin{theorem}
\label{theorem:walks_type_Atilde}
The map $\varphi':W^{FC}\rightarrow \Cylset^*_n\setminus\E_n$ is a bijection such that $\ell(w)=\haut'(\varphi'(w))$.
\end{theorem}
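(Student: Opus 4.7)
The plan is to establish that $\varphi'$ is well-defined, avoids $\E_n$, is injective, surjective onto $\Cylset^*_n \setminus \E_n$, and preserves length. By Proposition~\ref{prop:caracterisation_Atilde}, $\H(w)$ is alternating for every $w \in W^{FC}$, so the local step-rule behind $\varphi$ in Theorem~\ref{theorem:walk_encoding} applies verbatim at each vertex of the cyclic Coxeter diagram. The identification $s_n = s_0$ forces $P_0 = (0, |H_{s_0}|)$ and $P_n = (n, |H_{s_0}|)$ to have the same height, placing $\varphi'(w)$ in $\Cylset_n$, and condition $(*)$ is built into Definition~\ref{def:map_phi}. Length preservation is immediate: writing $H = \H(w)$, we get $\ell(w) = |H| = \sum_{i=0}^{n-1} |H_{s_i}| = \haut'(\varphi'(w))$.

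To rule out $\E_n$ from the image I would argue by contradiction. Suppose $\varphi'(w)$ has all points at a constant positive height $k \geq 1$ with all steps labeled $L$ (the $R$ case is symmetric). Then by Definition~\ref{def:map_phi}, the lowest element of each chain $H_{\{s_i, s_{i+1}\}}$ carries label $s_{i+1}$, so $s_{i+1}^{(1)} <_H s_i^{(1)}$ for every $i$ modulo $n$. Chaining cyclically yields $s_0^{(1)} <_H s_{n-1}^{(1)} <_H \cdots <_H s_1^{(1)} <_H s_0^{(1)}$, which is impossible.

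Injectivity follows because an alternating $\aff{A}_{n-1}$-heap is determined by the heights $(|H_{s_i}|)_i$ together with the bottom labels of the chains $H_{\{s_i, s_{i+1}\}}$ whenever the two corresponding heights agree and are positive, and this is exactly the data recorded in $\varphi'(w)$. For surjectivity, given $P \in \Cylset^*_n \setminus \E_n$, I would build alternating convex chains $\mathcal{C}_0, \ldots, \mathcal{C}_{n-1}$ from the labelled steps exactly as in the proof of Theorem~\ref{theorem:walk_encoding} (treating indices mod $n$) and set $H$ to be the transitive closure. By construction $H$ is alternating, so by Proposition~\ref{prop:caracterisation_Atilde}, once $H$ is shown to be a genuine poset, it is the heap of a unique FC element $w$, which satisfies $\varphi'(w) = P$ by construction. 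The only place where the cyclic nature of the diagram matters is the acyclicity of the transitive closure: any would-be cycle has to traverse all $n$ chains once around the diagram, and a direct bookkeeping on the heights and on the orientations of the chains shows that such a cycle exists precisely when every $|H_{s_i}|$ equals some $k \geq 1$ and every chain is oriented in the same direction---exactly the condition $P \in \E_n$ that we excluded.

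The main obstacle is this last acyclicity step. Theorem~\ref{theorem:walk_encoding} avoided it entirely because a linear diagram is a tree and its chain closures are automatically acyclic; in the affine type $\aff{A}_{n-1}$, by contrast, the cyclic diagram admits genuine cyclic obstructions, and the crux of the proof is to identify them precisely with the set $\E_n$.
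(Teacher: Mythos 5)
The parts you carry out are correct and essentially coincide with the paper's proof: well-definedness of the step rule, injectivity and length preservation are handled as in the paper, and your exclusion of $\E_n$ from the image (a cyclic chain of strict inequalities $s_0^{(1)} >_H s_1^{(1)} >_H \cdots >_H s_{n-1}^{(1)} >_H s_0^{(1)}$ among the bottom elements of the chains, which is impossible in a finite poset) is a valid, mildly different phrasing of the paper's argument via leftmost occurrences of the generators in a reduced word.

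However, there is a genuine gap at exactly the point you yourself flag as the crux: the acyclicity analysis in the surjectivity step. You assert that ``a direct bookkeeping on the heights and on the orientations of the chains shows that such a cycle exists precisely when every $|H_{s_i}|$ equals some $k\geq 1$ and every chain is oriented in the same direction,'' but that bookkeeping is the substance of the proof and is never supplied; even your preliminary claim that a would-be cycle must traverse the $n$ chains once around the diagram is unproved and not obvious, since a priori a cycle in the union $G_P$ of the oriented chains could wander among different occurrences $s_i^{(j)}$ of the same generator. The paper closes this by exploiting the alternating structure quantitatively: an arrow $s_i^{(j)}\rightarrow s_{i\pm 1}^{(j')}$ forces $j'\in\{j,j+1\}$, and $s_i^{(j)}\rightarrow s_{i\pm 1}^{(j')}\rightarrow s_i^{(j'')}$ forces $j''=j+1$, so all vertices of a cycle carry the same superscript $j$ and all its arrows go the same way around the diagram. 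It then rules out such a cycle at level $j=1$ because $P\notin\E_n$ guarantees a local minimum or maximum among the bottom elements $s_i^{(1)}$, which is incompatible with a cyclically monotone bottom row, and finally a minimal-$j$ argument shows that a cycle at level $j>1$ would, by the alternating property, produce one at level $j-1$. Some argument of this kind must be written out for surjectivity to stand; as submitted, the proof is incomplete precisely where type $\aff{A}_{n-1}$ differs from the linear situation of Theorem~\ref{theorem:walk_encoding}.
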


\begin{proof}

{\bf $\varphi'$ is well defined:} We have to show that $\varphi'(w)$ cannot belong to $\E_n$. Let $w\in \Aaff^{FC}_{n-1}$ and consider any reduced word $\mathbf{w}$ for it.  Set $j_0,j_1,\ldots,j_{n-1}$ the indices in $\mathbf{w}$ of the leftmost occurrences of the letters $s_0,s_1,\ldots,s_{n-1}$ respectively (if a $s_i$ does not occur then clearly $\varphi'(w)\notin \E_n$). Now assume $\varphi'(w)$ consists of horizontal steps all labeled $R$ (\emph{resp. $L$}): by construction, this would entail $j_0<j_1<\cdots<j_{n-1}<j_0$ (\emph{resp.} $j_0>j_1>\cdots>j_{n-1}>j_0$) which is a contradiction.

 {\bf  $\varphi'$ is {injective}:} By Theorem~\ref{theorem:walk_encoding}, the walk $\varphi'(w)$  allows us to reconstruct uniquely $H:=\H(w)$. Indeed each step $(i,h)\rightarrow(i+1,h')$ encodes the chain $H_{\{s_i,s_{i+1}\}}$, and $H$ is the transitive closure of these chains by Definition~\ref{defi:heaps}.

{\bf $\varphi'$ is surjective:} Let $P$  be a walk in $\Cylset^*_n\setminus\E_n$. Each step $(i,h)\rightarrow(i+1,h')$ of $P$ encodes a chain $C_{s_i,s_{i+1}}$, considered here as an oriented path with vertices $s_i^{(j)}$ for $j=1,\ldots, h$ and $s_{i+1}^{(j')}$ for $j'=1,\ldots, h'$. What we must show is that the oriented, labeled graph $G_P$ formed by the union of these chains is an {\em acyclic} graph, which means that it possesses no oriented cycle. In this case, $G_P$ naturally determines a labeled poset, which is an alternating heap $H$ such that $\varphi'(H)=P$.

 For each $i\in\{0,\ldots,n-1\}$, we denote by $s_i^{(j)}, j=1,\ldots,h_i$ the elements of $G_P$ labeled $s_i$ from bottom to top. By rotational symmetry, it is enough to show paths starting from a vertex $s_0^{(j)}$ cannot also end at $s_0^{(j)}$. The key property of $G_P$ is the following: if $s_i^{(j)}\rightarrow s_{i\pm 1}^ {(j')}$, then $j'=j$ or $j'=j+1$; this follows readily from the alternating condition. An immediate consequence is that the vertices $s_i^{(j)}$  involved in a possible cycle of $G_P$ necessarily have all the same index $j$. Another easy remark is that if $s_i^{(j)}\rightarrow s_{i\pm 1}^ {(j')}\rightarrow s_i^{(j'')}$, then $j''=j+1$. Therefore steps in a given cycle in $G_P$ must be all of the form $s_i^{(j)}\rightarrow s_{i+1}^ {(j)}$, or all of the form $s_i^{(j)}\rightarrow s_{i-1}^ {(j)}$.
 
 Since $P\notin \E_n$, we claim that there exists $i\in\{0,\ldots,n-1\}$ such that either $s_{i-1}^{(1)}>s_i^{(1)}<s_{i+1}^{(1)}$ or $s_{i-1}^{(1)}<s_i^{(1)}>s_{i+1}^{(1)}$: indeed,  if these bottom points were strictly increasing or decreasing in $G_P$, the same would be true for top points, and this would correspond to a path in $\E_n$.  From this we can deduce that there is no cycle from $s_0^{(1)}$ to itself. Now assume there exists a cycle from $s_0^{(j)}$ to itself for a $j>1$, and pick $j$ minimal with this property. This cycle can only involve vertices of the form $s_i^{(j)}$ as seen above, and because of the alternating property, replacing $j$ by $j-1$ creates a cycle from $s_0^{(j-1)}$ to itself. This contradicts the minimality of $j$, and completes the proof of surjectivity.
\end{proof}

\subsection{Periodicity and generating functions}

Let $a^n_\ell$ denote the number of FC elements of type $\aff{A}_{n-1}$ and of length $\ell$.

\begin{theorem}\label{theo:periodicityA}
Fix $n>0$. In type $\aff{A}_{n-1}$, the growth sequence $(a^{n}_\ell)_{\ell\geq 0}$ is ultimately periodic of period $n$. Moreover periodicity starts at length $\ell_0+1$, where  $\ell_0=\lfloor \frac{n-1}{2}\rfloor \lceil \frac{n-1}{2} \rceil$.
\end{theorem}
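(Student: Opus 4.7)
The starting point is Theorem~\ref{theorem:walks_type_Atilde}, which bijects $W^{FC}$ (in type $\aff{A}_{n-1}$) with $\Cylset^*_n\setminus\E_n$ via $\varphi'$, sending the Coxeter length $\ell(w)$ to the height statistic $\haut'(\varphi'(w))$. I decompose walks in $\Cylset^*_n$ by their minimum height $m$. Walks with $m=0$ must satisfy $(*)$ nontrivially (each horizontal step on the $x$-axis is labeled $L$); let $A(q)$ be their $\haut'$-generating function. For $m\ge 1$ the walk never touches the $x$-axis, so $(*)$ is vacuous, and shifting all heights down by $m$ bijects these walks with the set $\mathcal{U}$ of walks of length $n$ with $P_0=P_n$, minimum height $0$, and arbitrary $L$ or $R$ labels on \emph{every} horizontal step (including those on the $x$-axis); write $B(q)$ for the $\haut'$-generating function of $\mathcal{U}$. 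The shift decreases $\haut'$ by exactly $nm$, while $\E_n$ contributes two constant walks at each positive height, so summing over $m$ yields
\[
W^{FC}(q)=A(q)+\frac{q^n\bigl(B(q)-2\bigr)}{1-q^n}=\frac{N(q)}{1-q^n},\quad\text{with}\quad N(q):=A(q)+q^n\bigl(B(q)-A(q)-2\bigr).
\]

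Multiplying through by $1-q^n$ and equating coefficients gives the key identity $a^n_{\ell+n}-a^n_\ell=[q^{\ell+n}]N(q)$. Since $A$ and $B$ are polynomials in $q$, so is $N$, and the identity shows that $(a^n_\ell)_{\ell\ge 0}$ is ultimately periodic of period $n$ with periodicity starting exactly at $\ell^*=\deg N - n + 1$ (assuming the leading coefficient of $N$ is positive, which will be clear). The goal is therefore to prove $\deg N=n+\ell_0$. From the expression for $N$, one has $\deg N\le\max\bigl(\deg A,\,n+\deg(B-A)\bigr)$, so the task reduces to two explicit extremal problems on walks in $\mathcal{U}$.

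First, $\deg A=\lfloor n/2\rfloor\lceil n/2\rceil$, attained by the V-shape walk $h_j=|j-\lfloor n/2\rfloor|$ with $h_0=h_n=\lfloor n/2\rfloor$; a short parity check shows $\lfloor n/2\rfloor\lceil n/2\rceil<n+\ell_0$ in every case. Second, for $\deg(B-A)$: since $B(q)-A(q)=\sum_P(2^{k(P)}-1)q^{\haut'(P)}$ where $P$ ranges over walks in $\mathcal{U}$ having $k(P)\ge 1$ horizontal steps on the $x$-axis, the degree is the maximum of $\haut'$ over such walks. Any such walk has $h_i=h_{i+1}=0$ for some $i$, and removing this step splits it into a Part~$1$ of length $a=i$ from $h_0=H$ down to $0$, and a Part~$2$ of length $b=n-1-a$ from $0$ up to $h_n=H$, with $H\le\min(a,b)$. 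Each part is independently maximized by an appropriate V-shape, and optimizing over $H$ and the split $a+b=n-1$ (choosing $H=\lfloor(n-1)/2\rfloor$ and $\{a,b\}$ as balanced as possible) yields total $\haut'$ equal to $\lfloor(n-1)/2\rfloor\lceil(n-1)/2\rceil=\ell_0$. Combining gives $\deg N=n+\ell_0$, so periodicity starts at $\ell_0+1$. Moreover the extremum $\ell_0$ is attained by an explicit walk, so $[q^{\ell_0}](B-A)>0$; together with $\deg A<n+\ell_0$ this forces $[q^{\ell_0+n}]N=[q^{\ell_0}](B-A)>0$, which shows periodicity truly fails at $\ell=\ell_0$. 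The main obstacle will be the parity case analysis in the optimization of $\deg(B-A)$: depending on the parities of $n$, $a$, $H$, the optimal V-shape for one or both parts may need to absorb a horizontal step, and one must verify that every such configuration stays below $\ell_0$ while at least one achieves it.
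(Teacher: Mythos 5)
Your argument is correct and is essentially the paper's proof recast in generating-function form: writing $W^{FC}(q)=N(q)/(1-q^n)$ with $N$ a polynomial is exactly the paper's shift-by-one-unit bijection on walks, and your determination of $\deg N$ rests on the same extremal facts the paper invokes, namely that $\ell_0$ is the largest area of a length-$n$ walk (equal starting and ending heights) containing a horizontal step on the $x$-axis, together with the bound $\lfloor n/2\rfloor\lceil n/2\rceil<n+\ell_0$ for walks merely touching the axis. (One cosmetic slip: in $B-A$ the weight of a shape with $k\ge1$ axis-level and $r$ higher horizontal steps is $(2^k-1)2^r$ rather than $2^k-1$, but only its positivity matters, so nothing breaks.)
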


%the largest $\ell$ such that $a^{n}_\ell\neq a^n_{\ell+n}$ is given by

\begin{proof}
By Theorem~\ref{theorem:walks_type_Atilde}, $a^n_\ell$ is the number of walks in $\Cylset^*_n\setminus\E_n$ with area $\ell$. Assume $\ell$ is large enough so that paths of area $\ell$ will not have any horizontal step at height $0$. In this case $S:\Cylset^*_n(\ell)\to \Cylset^*_n(\ell+n)$ which shifts every vertex up by one unit (and preserves labels $L,R$) is a bijection.

 A simple calculation shows that $\ell_0$ is the largest area for which there exists such a path in $\Cylset^*_n$ with a horizontal step at height $0$ (see illustration below). It is easy to show that $a^{n}_{\ell_0}=a^n_{\ell_0+n}-n$ when $n$ is odd and $a^{n}_{\ell_0}=a^n_{\ell_0+n}-2n$ when $n$ is even, which completes the proof.
\end{proof}

\begin{figure}[!ht]
\begin{center}
 \includegraphics[width=0.7\textwidth]{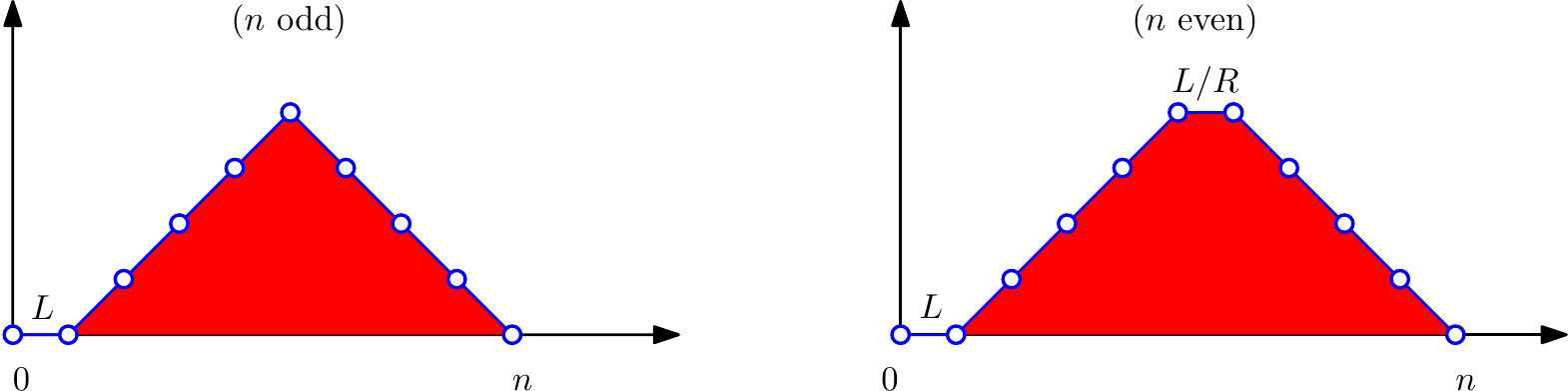}
 \end{center} 
\end{figure}

The periodicity was proved in \cite{HanJon}, while the exact beginning of periodicity was only conjectured by the authors.

\begin{corollary}
\label{cor:Atilde}
The generating function of FC elements of type $\aff{A}_{n-1}$ is
\begin{equation}
 \label{eq:Aaffine}
\aff{A}_{n-1}^{FC}(q)=\Cyl^*_n(q)-\frac{2q^n}{1-q^n}= \frac{q^{n}(\touch{\Cyl}_n(q)-2)}{1-q^{n}}+\touch{\Cyl}^*_n(q).
\end{equation}
Moreover, $\aff{A}_{n-1}^{FC}(q)$ can be computed through the equations:
\begin{equation}
\label{eq:aff_touch}
\left\{\begin{array}{l}\displaystyle\touch{\Cyl}(x)=\Motzbic(x)\left( 1+qx^2{\frac{\partial(xM)}{\partial x}}(qx) \right)\\\displaystyle\touch{\Cyl}^*(x)=\Motz(x)\left( 1+qx^2{\frac{\partial(xM)}{\partial x}}(qx) \right).\end{array}\right.
\end{equation}
Here $\Motz(x)$ and  $\Motzbic(x)$ are calculated thanks to~\eqref{relationA1} and~\eqref{eqfonctMetoile}.

\end{corollary}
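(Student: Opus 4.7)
The plan is to establish the two displayed formulas in order: the double expression for $\aff{A}_{n-1}^{FC}(q)$ first, and the coupled functional equations for $\touch{\Cyl}(x)$ and $\touch{\Cyl}^*(x)$ afterward. For the first equality, Theorem~\ref{theorem:walks_type_Atilde} immediately gives $\aff{A}_{n-1}^{FC}(q)=\Cyl^*_n(q)-\E_n(q)$, and a direct enumeration of $\E_n$ (indexed by a positive height $h$ and a label $L$ or $R$, with area $\haut'(P)=nh$) yields $\E_n(q)=\frac{2q^n}{1-q^n}$. For the second equality, I would split $\Cylset^*_n$ into axis-touching walks, contributing $\touch{\Cyl}^*_n(q)$, and strictly positive walks; each of the latter is obtained uniquely from some walk in $\touch{\Cylset_n}$ by a vertical shift $k\geq 1$, and automatically satisfies $(*)$ since it has no horizontal step at height $0$. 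Summing over $k$ yields $\Cyl^*_n(q)=\touch{\Cyl}^*_n(q)+\frac{q^n\touch{\Cyl}_n(q)}{1-q^n}$, and subtracting $\frac{2q^n}{1-q^n}$ produces the second form.

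For the functional equations, the idea is to decompose each $W\in\touch{\Cylset_n}$ according to its starting height $h$. If $h=0$, then $W\in\Motzbicset_n$, which accounts for the ``$1$'' inside the bracket. If $h\geq 1$, let $i_1<i_k$ be the first and last indices at which $W$ touches the $x$-axis, and split $W$ into prefix $(P_0,\ldots,P_{i_1})$, middle $(P_{i_1},\ldots,P_{i_k})$, and suffix $(P_{i_k},\ldots,P_n)$. The middle lies in $\Motzbicset_{i_k-i_1}$, while the cyclic concatenation ``suffix then prefix'' (with abscissae relabeled) produces an arch $W^*$ of length $m=i_1+(n-i_k)\geq 2$, i.e.\ a walk from height $0$ to height $0$ that begins with $U$, ends with $D$, and stays strictly positive in between. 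Recording also the position $k\in\{1,\ldots,m-1\}$ of the original basepoint $P_0=P_n$ inside $W^*$ gives a bijection between walks $W$ with $h\geq 1$ and triples (middle, arch, cut). Since horizontal steps at height $0$ can only occur in the middle, the condition $(*)$ on $W$ is equivalent to $(*)$ on its middle alone, which is precisely why the middle factor switches from $\Motzbic$ to $\Motz$ between the two functional equations while the bracketed factor is unchanged.

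It then remains to identify the arch-with-cut generating function with $qx^2\frac{\partial(x\Motzbic)}{\partial x}(qx)$. For this, I would note that removing the initial $U$ and terminal $D$ of an arch of length $m\geq 2$ and shifting the result down by one unit puts it in bijection with $\Motzbicset_{m-2}$, with area dropping by $m-1$; hence arches of length $m$ are counted by area by $q^{m-1}\Motzbic_{m-2}(q)$. Summing $(m-1)q^{m-1}\Motzbic_{m-2}(q)x^m$ over $m\geq 2$ and rewriting yields the claimed derivative expression. The main obstacle is the cyclic gluing together with its area bookkeeping: one must verify that $\haut'(W)=\haut(\mathrm{middle})+\haut(W^*)$, which works precisely because $\haut'$ omits the vertex $P_n$, so the two shared $x$-axis endpoints of the middle and the arch are each counted exactly once rather than doubly. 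Once that identity is settled, multiplying by $\Motzbic(x)$ (resp.\ $\Motz(x)$) and adding the $h=0$ contribution completes both functional equations.
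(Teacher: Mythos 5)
Your proposal is correct and follows the paper's own proof essentially step for step: the same subtraction of the $\E_n$-contribution $2q^n/(1-q^n)$, the same decomposition of $\Cylset^*_n$ according to whether the walk touches the $x$-axis (with strictly positive walks obtained by shifting walks of $\touch{\Cylset}_n$ up by $k\geq 1$), and the same splitting of an axis-touching walk into a central Motzkin factor and a cyclically glued, marked arch whose generating function is $qx^2\frac{\partial(x\Motzbic)}{\partial x}(qx)$, with condition $(*)$ localized to the central factor. One small wording quibble, not a gap: in verifying $\haut'(W)=\haut(\mathrm{middle})+\haut(W^*)$, the shared axis vertices $P_{i_1},P_{i_k}$ are in fact counted twice in the right-hand side, harmlessly because they have height $0$; the omission of $P_n$ from $\haut'$ is what ensures the basepoint $P_0=P_n$ (of positive height) is counted once on each side.
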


\begin{proof}
By Theorem~\ref{theorem:walks_type_Atilde}, we directly have that $\Aaff_{n-1}^{FC}(q)=\Cyl^*_n(q)-{2q^n}/(1-q^n)$. Now we have to count walks in $\Cylset^*_{n}$, and to this end we decompose them according to the lowest height they reach as follows: 
\begin{center}
 \includegraphics[width=0.6\textwidth]{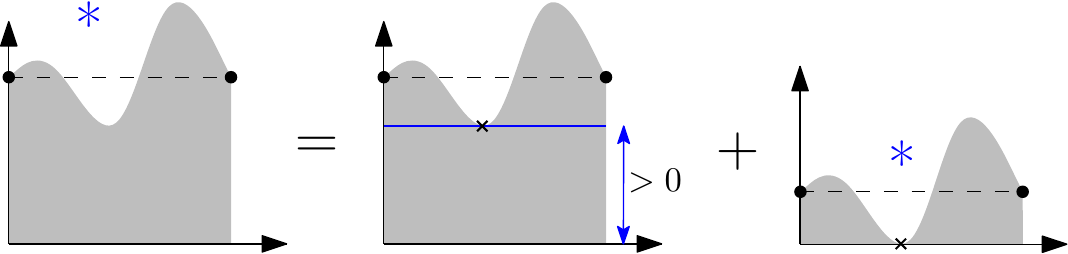}
 \end{center} 
 This gives the equation $\Cyl^*_n(q)={q^n}\touch{\Cyl}_n(q)/(1-q^n)+\touch{\Cyl}^*_n(q)$, and therefore~\eqref{eq:Aaffine}.
Finally, to compute $\touch{\Cyl}_n(q)$ and $\touch{\Cyl}^*_n(q)$, note that walks in $\touch{\Cylset}_n$ (\emph{resp.}  $\touch{\Cylset}^*_n$) either belong to $\Motzbicset_n$ (\emph{resp.} $\Motzset_n$) or can be decomposed as in the picture below: 

\begin{center}
\includegraphics[width=0.4\textwidth]{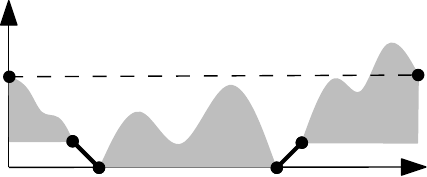}
 \end{center}
 
Here the central part is a path in  $\Motzbicset_j$ (\emph{resp.} $\Motzset_j$), for some integer $j$. The two other parts can be joined to form a path in $\Motzbicset_{k}$ where $k=n-j-2$, shifted up, and with a marked vertex to record where the parts are glued. The area generating polynomial of these marked paths is given by $q^{k+1}(k+1)M_k(q)$, whose generating  function is
 $q\frac{\partial(xM)}{\partial x}(qx)$ and the equations~\eqref{eq:aff_touch} follow.
\end{proof}

We now give the following result, whose easy proof is left to the reader.
\begin{lemma}
\label{lemma:periodic}
Suppose $F(q)=\sum_{i\geq 0}f_iq^i={P(q)}/(1-q^N)$ where $N$ is a positive integer and  $P(q)$ is a polynomial of degree $d$. Then one has $f_{i+N}=f_i$ for all $i\geq d$. Furthermore the mean value over a period $(f_i+f_{i+1}+\cdots+f_{i+N-1})/N$ for $i\geq d$ is equal to $P(1)/N$.
\end{lemma}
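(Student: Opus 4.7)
The plan is to prove both claims by the most direct route: expand the rational function $F(q)$ as a product of $P(q)$ with the geometric series $\sum_{k\geq 0} q^{Nk}$ and extract coefficients. Writing $P(q)=\sum_{j=0}^d p_j q^j$, this expansion yields
\[
f_i \;=\; \sum_{\substack{0 \leq j \leq d,\ k\geq 0 \\ j + Nk = i}} p_j \;=\; \sum_{\substack{0\leq j\leq \min(i,d)\\ j \equiv i \pmod N}} p_j.
\]
The first step is to observe that as soon as $i\geq d$, the constraint $j\leq i$ is already implied by $j\leq d$, so the formula reduces to $f_i = \sum_{0\leq j\leq d,\, j\equiv i\,(\bmod\,N)} p_j$, which depends on $i$ only through its residue modulo $N$. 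The periodicity $f_{i+N}=f_i$ for $i\geq d$ follows immediately.

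For the mean value, I would then sum this explicit formula over a full period. For $i\geq d$,
\[
\sum_{r=0}^{N-1} f_{i+r} \;=\; \sum_{r=0}^{N-1} \sum_{\substack{0\leq j\leq d \\ j\equiv i+r\,(\bmod\,N)}} p_j \;=\; \sum_{j=0}^d p_j \;=\; P(1),
\]
because as $r$ runs over a full residue system modulo $N$, each $j\in\{0,\ldots,d\}$ falls into the congruence class $j\equiv i+r\pmod N$ for exactly one value of $r$. Dividing by $N$ gives the claimed mean $P(1)/N$.

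As a sanity check and alternative route for the periodicity, I would note that clearing the denominator gives $(1-q^N)F(q)=P(q)$, and reading the coefficient of $q^i$ on both sides yields the recurrence $f_i - f_{i-N}=[q^i]P(q)$, whose right-hand side vanishes for $i>d$. This is slightly stronger than what is stated (it gives periodicity from $i\geq d-N+1$), but the weaker form suffices.

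There is no real obstacle here: this is a one-line partial-fractions computation, and the only thing worth being slightly careful about is the truncation in $j\leq \min(i,d)$, which is precisely why the bound $i\geq d$ appears in the statement.
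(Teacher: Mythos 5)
Your proof is correct: the coefficient extraction $f_i=\sum_{0\leq j\leq \min(i,d),\, j\equiv i \pmod N} p_j$ is valid, the reduction of the truncation $\min(i,d)=d$ for $i\geq d$ gives the periodicity, and the full-period sum correctly picks up each $p_j$ exactly once, yielding $P(1)$. The paper explicitly leaves the proof of this lemma to the reader, so there is no argument to compare against; your computation (including the sharper observation via $(1-q^N)F(q)=P(q)$ that periodicity already holds from $i\geq d-N+1$) is exactly the standard one the authors intend.
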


\begin{proposition}
\label{prop:meanA}
The mean value $\mu_{\Aaff_{n-1}}$ of $(a^{n}_\ell)_{\ell\geq 0}$ is equal to $\frac{1}{n}\left(\binom{2n}{n}-2\right).$
\end{proposition}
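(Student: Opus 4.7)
The plan is to apply Lemma~\ref{lemma:periodic} to the expression for $\aff{A}_{n-1}^{FC}(q)$ given in Corollary~\ref{cor:Atilde}. For fixed $n$, walks in $\touch{\Cylset}_n$ and $\touch{\Cylset}^*_n$ are bounded in total height (since the starting height is at most $n/2$ for a walk that touches the $x$-axis and returns), so both $\touch{\Cyl}_n(q)$ and $\touch{\Cyl}^*_n(q)$ are polynomials in $q$. Putting the second expression of~\eqref{eq:Aaffine} over the common denominator $1-q^n$ yields
\begin{equation*}
\aff{A}_{n-1}^{FC}(q) = \frac{P(q)}{1-q^n},\qquad P(q):= q^n\bigl(\touch{\Cyl}_n(q) - 2\bigr) + (1-q^n)\touch{\Cyl}^*_n(q).
\end{equation*}
By Lemma~\ref{lemma:periodic}, the mean value equals $P(1)/n$, and a direct evaluation gives $P(1) = \touch{\Cyl}_n(1) - 2$ since the second summand carries a factor $(1-q^n)$.

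It remains to prove that $\touch{\Cyl}_n(1) = \binom{2n}{n}$. For this I would specialize the first equation of~\eqref{eq:aff_touch} at $q=1$. Writing $\bar{M}(x) := \Motzbic(x)|_{q=1}$, the usual bicolored Motzkin recursion gives $\bar{M} = 1 + 2x\bar{M} + x^2 \bar{M}^2$, whose solution can be written in terms of the Catalan generating function $C(x) = \sum_{n\geq 0} C_n x^n$ as $\bar{M}(x) = (C(x) - 1)/x = C(x)^2$, the last equality following from $xC^2 = C-1$. Consequently $x\bar{M}(x) = C(x) - 1$, so $\partial_x(x\bar{M})|_{q=1} = C'(x)$, and the specialized equation reads
\begin{equation*}
\touch{\Cyl}(x)|_{q=1} = C(x)^2\bigl(1 + x^2 C'(x)\bigr).
\end{equation*}

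The main obstacle is then to identify this series with $(1-4x)^{-1/2} = \sum_{n\geq 0} \binom{2n}{n} x^n$. This reduces to a short algebraic manipulation using three classical Catalan identities: $xC^2 = C-1$, the equivalent forms $1-2xC = \sqrt{1-4x}$ and $C(1+\sqrt{1-4x}) = 2$, and $C^2 = C'\sqrt{1-4x}$ (obtained by differentiating $xC^2 = C-1$). Combining these yields $C^2(1+x^2C') = 1/\sqrt{1-4x}$. Extracting the coefficient of $x^n$ gives $\touch{\Cyl}_n(1) = \binom{2n}{n}$, hence $\mu_{\Aaff_{n-1}} = \frac{1}{n}\bigl(\binom{2n}{n} - 2\bigr)$ as claimed.
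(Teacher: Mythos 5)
Your proof is correct, and its overall skeleton matches the paper's: both reduce, via Lemma~\ref{lemma:periodic} applied to the second expression in~\eqref{eq:Aaffine}, to showing $\touch{\Cyl}_n(1)=\binom{2n}{n}$ (your preliminary remarks on polynomiality and $P(1)=\touch{\Cyl}_n(1)-2$ are fine). Where you genuinely diverge is in how this count is obtained. The paper does it bijectively and without any computation: shift a walk of $\touch{\Cylset}_n$ so that it starts at the origin and apply the step-doubling $U\mapsto UU$, $D\mapsto DD$, $L\mapsto UD$, $R\mapsto DU$ of Remark~\ref{rem:MotzToDyck}; the touching condition is exactly what lets one recover the original starting height, so this is a bijection onto bilateral paths of length $2n$ (arbitrary words with $n$ up and $n$ down steps), of which there are plainly $\binom{2n}{n}$. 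You instead specialize the functional equation~\eqref{eq:aff_touch} at $q=1$ and run Catalan generating-function algebra: your identifications $\Motzbic(x)|_{q=1}=C(x)^2$, $x\Motzbic(x)|_{q=1}=C(x)-1$ are correct (they can also be read off the paper's own equations~\eqref{relationA1} and~\eqref{eqfonctMetoile} at $q=1$, so you need not invoke the bicolored Motzkin recursion as an outside fact), and the final identity $C^2\bigl(1+x^2C'\bigr)=(1-4x)^{-1/2}$, which you state but do not write out, does check: with $s=\sqrt{1-4x}$ one has $C^2=(1-2x-s)/(2x^2)$ and $x^2C'=(1-2x-s)/(2s)$, so the product is $\bigl((1-2x)^2-s^2\bigr)/(4x^2s)=1/s$. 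The trade-off: the paper's argument is shorter, self-contained (it needs only~\eqref{eq:Aaffine}, not~\eqref{eq:aff_touch}), and the same shift-and-double idea is reused for $\touch{\Gen}_n(1)=4^n$ in Proposition~\ref{prop:meanBCD}; your route is purely mechanical within the functional-equation framework, at the price of depending on~\eqref{eq:aff_touch} and on a closed-form Catalan manipulation that the bijection renders transparent.
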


\begin{proof}
By Lemma~\ref{lemma:periodic} and Corollary~\ref{cor:Atilde},  we have to prove $\touch{\Cyl}_n(1)=\binom{2n}{n}$. To see this, shift any path of $\touch{\Cylset}_n$ so that it starts at the origin, and use the transformations $U\mapsto UU,D\mapsto DD,L\mapsto UD,R\mapsto DU$ defined in Remark~\ref{rem:MotzToDyck}. This is a bijection from $\touch{\Cylset}_n$ to {\em bilateral} paths, i.e. paths from the origin to $(2n,0)$ using steps $U$ or $D$. There are obviously $\binom{2n}{n}$ such paths, which concludes the proof.
\end{proof}

% When $n=p$ is prime it is shown in~\cite{HanJon} that the period is actually $1$, so that this mean value is the value of the coefficients.

%\begin{figure}[!h]
%\begin{minipage}[b]{0.65\linewidth}
%\includegraphics{Decomp_Cyclic_Walks_2}

%%%%%%%%%%%%%%%%%%%%%
\subsection[Type A]{Type $A$}
\label{sub:finA}
%%%%%%%%%%%%%%%%%%%%%

We can deduce results in type $A$ from those in type $\aff{A}$. FC elements of type $A_{n-1}$ are in bijection with FC elements of type $\aff{A}_{n-1}$ whose reduced expressions have no occurrence of $s_0$. Therefore by Theorem~\ref{theorem:walks_type_Atilde}, FC elements of type $A_{n-1}$ are in bijection with elements of $\Cylset_n^*$ with starting and ending points at height $0$, which is precisely the set $\Motzset_n$. Since weights are preserved, we deduce the following enumerative result about $A^{FC}(x):=\sum_{n\geq1}x^nA_{n-1}^{FC}(q)$, which was obtained in a different way by Barcucci et al. in~\cite{BDPR}.

\begin{proposition}\label{typeA}
We have $\displaystyle A_{n-1}^{FC}(q)=\Motz_{n}(q)$, and equivalently 
$$\displaystyle{A^{FC}(x)=\Motz(x)-1}.$$
\end{proposition}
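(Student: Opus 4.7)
The plan is to realize $A_{n-1}$ as the standard parabolic subgroup of $\aff{A}_{n-1}$ obtained by removing the vertex $s_0$ from the Coxeter diagram. A reduced expression for $w\in A_{n-1}$ is also reduced when $w$ is viewed in $\aff{A}_{n-1}$, and by Matsumoto's theorem all reduced expressions of $w$ stay inside the parabolic subgroup, so they use only $s_1,\ldots,s_{n-1}$. Since the commutation and braid relations on $\{s_1,\ldots,s_{n-1}\}$ are the same in both groups, $w$ is FC in $A_{n-1}$ if and only if $w$ is FC in $\aff{A}_{n-1}$ and $s_0$ does not appear in any (equivalently, in some) reduced expression of $w$.

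Next I would apply the bijection $\varphi':\aff{A}_{n-1}^{FC}\to\Cylset_n^*\setminus\E_n$ of Theorem~\ref{theorem:walks_type_Atilde}. By construction the starting point of $\varphi'(w)$ is $P_0=(0,|H_{s_0}|)$ where $H=\H(w)$, so the condition ``$s_0$ does not appear in $w$'' is equivalent to $P_0=(0,0)$. Since walks in $\Cylset_n^*$ have $P_0$ and $P_n$ at the same height, this forces $P_n=(n,0)$ as well, so $\varphi'(w)\in\Motzset_n$. Conversely, any walk in $\Motzset_n$ starts and ends at height $0$, hence lies in $\Cylset_n^*$ but cannot lie in $\E_n$ (which requires strictly positive height), so $\varphi'$ restricts to a bijection $A_{n-1}^{FC}\to\Motzset_n$.

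Finally, the statistics match: for $P\in\Motzset_n$ we have $h_0=h_n=0$, hence $\haut'(P)=\sum_{i=0}^{n-1}h_i=\sum_{i=0}^{n}h_i=\haut(P)$, and therefore $\ell(w)=\haut(\varphi'(w))$, which gives the equality $A_{n-1}^{FC}(q)=\Motz_n(q)$. The generating function identity follows by summing over $n\geq 1$ and using $\Motz_0(q)=1$ to write $\Motz(x)=1+\sum_{n\geq 1}x^n\Motz_n(q)=1+A^{FC}(x)$. The only non-mechanical step is the reduction to a parabolic subgroup, which is entirely standard; everything else is an immediate specialization of Theorem~\ref{theorem:walks_type_Atilde}, so I do not expect any genuine obstacle.
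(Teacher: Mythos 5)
Your proposal is correct and follows essentially the same route as the paper: identify $A_{n-1}^{FC}$ with the FC elements of $\aff{A}_{n-1}$ having no $s_0$ in their reduced expressions, then restrict the bijection $\varphi'$ of Theorem~\ref{theorem:walks_type_Atilde} to walks starting and ending at height $0$, i.e.\ to $\Motzset_n$, with the length statistic preserved. Your extra justifications (the parabolic-subgroup reduction, the exclusion of $\E_n$, and the equality $\haut'=\haut$ on $\Motzset_n$) are exactly the details the paper leaves implicit.
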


In particular, since $\Motzset_n$ is in bijection with Dyck walks by Remark~\ref{rem:MotzToDyck}, this shows that ${A}^{FC}_{n-1}$ has cardinality the $n$th Catalan number $\frac{1}{n+1}\binom{2n}{n}$: this is a well-known result, which is proved for example in~\cite{BJS,Fan,St3}.

\begin{corollary}\label{italiens}
The generating function $A^{FC}(x)$ satisfies the following functional equation
\begin{equation}\label{eqfonctA}
A^{FC}(x)=x+xA^{FC}(x)+qxA^{FC}(x)(A^{FC}(qx)+1).
\end{equation}
\end{corollary}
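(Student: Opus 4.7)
The strategy is to use the identification $A^{FC}(x)=\Motz(x)-1$ from Proposition~\ref{typeA} and translate \eqref{eqfonctA} into an equivalent equation purely in $\Motz(x)$. Setting $f(x):=A^{FC}(x)$, so that $\Motz(x)=f(x)+1$, a direct algebraic substitution shows that \eqref{eqfonctA} is equivalent to
$$\Motz(x)=1+x\Motz(x)+qx\Motz(qx)\bigl(\Motz(x)-1\bigr);$$
the goal thus reduces to proving this identity for $\Motz$.

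I would establish it by combining two elementary walk decompositions. First, an arch decomposition of walks in $\Motzset_n$: any such walk is either empty, or begins with a horizontal $L$-step at height~$0$ (forced by condition~$(*)$) followed by an arbitrary walk in $\Motzset$, or begins with a $U$-step and splits uniquely as a first arch (from the origin up to the first return to height~$0$) followed by an arbitrary walk in $\Motzset$. The interior of an arch of length $k\ge 2$, shifted down by one unit, is an arbitrary walk in $\Motzbicset_{k-2}$, and a careful count of the total height contributions $(k-1)+\haut(W)$ gives
$$\Motz(x)=1+x\Motz(x)+qx^{2}\Motzbic(qx)\,\Motz(x).$$
Second, the walks in $\Motzbicset_n\setminus\Motzset_n$ are exactly those carrying at least one $R$-labeled horizontal step at height~$0$, and splitting at the \emph{first} such $R$-step expresses each one uniquely as a walk in $\Motzset$, one $R$-step, then a walk in $\Motzbicset$, whence
$$\Motzbic(x)=\Motz(x)\bigl(1+x\Motzbic(x)\bigr),\quad\text{or equivalently}\quad\Motzbic(qx)=\frac{\Motz(qx)}{1-qx\Motz(qx)}.$$

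To conclude, I would substitute this expression for $\Motzbic(qx)$ into the arch equation and clear the denominator $1-qx\Motz(qx)$: the term $qx^{2}\Motzbic(qx)\Motz(x)$ collapses into $qx\Motz(qx)(\Motz(x)-1)$ after cancellation, yielding exactly the required identity for $\Motz$, and hence \eqref{eqfonctA} for $f=A^{FC}$. The main obstacle is not the final algebra (which is a one-line manipulation once the two equations are in hand), but rather keeping the combinatorial bookkeeping consistent across the two decompositions—tracking condition~$(*)$, the heights, and the $L/R$ labels so that every area contribution is counted exactly once and the parameter $q$ is correctly shifted to $qx$ when the interior of an arch is raised by one unit.
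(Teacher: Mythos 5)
Your proposal is correct and follows essentially the same route as the paper: the same two walk decompositions (the arch/first-return decomposition giving $\Motz(x)=1+x\Motz(x)+qx^2\Motzbic(qx)\Motz(x)$, i.e.\ \eqref{relationA2}, and the split at an $R$-step at height $0$ giving \eqref{relationA1}), combined by the same elimination of $\Motzbic(qx)$ to reach \eqref{eqfonctMetoile}, which is \eqref{eqfonctA} via Proposition~\ref{typeA}. The height bookkeeping $(k-1)+\haut(W)$ and the $q$-shift $x\mapsto qx$ for the elevated arch interior are exactly as in the paper's argument.
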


\begin{proof} We have the following walk decompositions with corresponding equations:

\begin{center}
\includegraphics[width=0.7\textwidth]{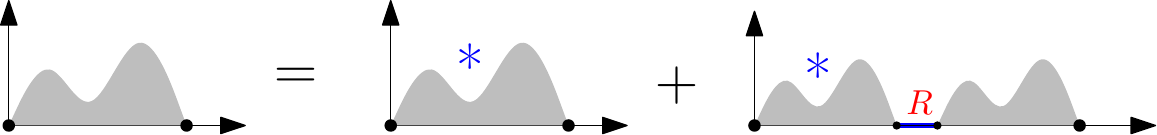}
\end{center}

\begin{equation}\label{relationA1}
\Motzbic(x)=\Motz(x)+x\Motz(x)\Motzbic(x),
\end{equation}

\begin{center}
\includegraphics[width=0.7\textwidth]{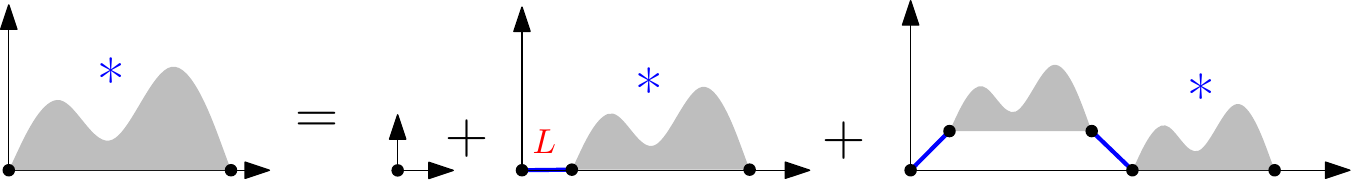}
\end{center}

\begin{equation}\label{relationA2}
\Motz(x)=1+x\Motz(x)+qx^2\Motz(x)\Motzbic(qx).
\end{equation}

\bigskip

 The identity~\eqref{relationA1} gives $\Motzbic(x)=\Motz(x)/(1-x\Motz(x))$, which can be replaced in~\eqref{relationA2} to yield after some simplifications:
\begin{equation}\label{eqfonctMetoile}
\Motz(x)=1+x\Motz(x)+qx(\Motz(x)-1)\Motz(qx).
\end{equation}
Finally, we see through Proposition~\ref{typeA} that~\eqref{eqfonctMetoile} is exactly Equation~\eqref{eqfonctA}.
\end{proof}

Corollary~\ref{italiens} gives another proof of~\cite[Eq. (3.0.2)]{BDPR}, where $A^{FC}(x)$ is denoted by $C(x,q)$. In their work, Barcucci et al. also proved an expression for $C(x,q)$ as a quotient of $q$-Bessel type functions, using a recursive rewriting rule for $321$-avoiding permutations and a result of  Bousquet-M\'elou~\cite[Lemma~2.2]{Mireille}. It is possible to derive their ratio of $q$-Bessel functions by writing 
\begin{equation}\label{qbessel}
A^{FC}(x)+1=\Motz(x)=\frac{\sum_{n\geq0}\alpha_n(q)x^n/(x;q)_{n+1}}{\sum_{n\geq0}\alpha_n(q)x^n/(x;q)_{n}},
\end{equation}
where $(x;q)_n=(1-x)\cdots(1-xq^{n-1})$ stands for the classical $q$-rising factorial, and $\alpha_n(q)$ is a $q$-hypergeometric coefficient. Then plugging~\eqref{qbessel} into~\eqref{eqfonctMetoile} yields $\alpha_n(q)=\alpha_n(q)q^{-n}+\alpha_{n-1}(q)$, from which we deduce 
$$\alpha_n(q)=-\frac{q^n}{1-q^n}\alpha_{n-1}(q),$$
and finally $\alpha_n(q)=(-1)^nq^{n(n+1)/2}\alpha_0(q)/(q;q)_n$, as in~\cite{BDPR}. 

%%%%%%%%%%%%%%%%%%%%%%%%%%%
\subsection{Connection to previous related works}
%%%%%%%%%%%%%%%%%%%%%%%%%%%%

There are many papers which investigate fully commutative elements in type $\aff{A}$, and in some of them one can find explicit characterizations of these elements. We will give a brief account of the four works related to the present one we could find in the literature, and explain their relationship with ours. We also believe this is all the more a worthy enterprise since, based on their respective citations, none of these works seemed to be aware of the earlier ones.

The first work~\cite{FanGreen_Affine} is due to Fan and Green and does not explicitly set out to characterize fully commutative elements. Its goal is to study the Temperley--Lieb algebra of type $\aff{A}$, which has a basis indexed by FC elements. The authors then give a diagrammatic representation of this algebra, by sending the aforementioned basis to an explicit basis of admissible diagrams. This last basis being rather explicit, it gives in return some characterization of FC elements; see Section~\ref{sub:diagramTL} for more informations. 

The second article~\cite{HagiwaraAtilde} of Hagiwara deals with \emph{minuscule heaps of type $\aff{A}$}. In general, minuscule heaps are a strict subset of FC heaps, but here they coincide, as the author shows in his Theorem 5.1. His characterization goes by embedding posets in a family of slanted lattices $L_k$, and Hagiwara proves that FC heaps are precisely the finite convex subsets occurring in a lattice $L_k$. It can be easily seen that this is a corollary of our work; the gradient $k$ defined in~\cite[p.17]{HagiwaraAtilde} can be seen in the path $\varphi'(H)$ as the sum of the number of up steps and the number of horizontal $R$ steps.

The paper~\cite{HanJon} by Hanusa and Jones was already mentioned several times. Here the FC permutations are classified and counted by dividing them first into long and short ones. Long permutations are easily counted and have a pleasing generating function, while the enumeration of short ones requires several pages resulting in a rather complicated generating function. As mentioned before, we could confirm their conjecture about the precise beginning of the periodicity. One can see \emph{a posteriori} that we could manage this by considering all elements in our approach, without dividing them beforehand into adequate ``long'' and ``short'' ones.

In the recent work~\cite{AlHarbat} by Al Harbat, the author classifies FC elements by indicating a normal form for each of them. That is, the main theorem exhibits a family of reduced FC expressions where each FC element is represented exactly once. We will not detail this here, but these normal forms correspond to a particular linear extension of FC heaps which is fairly easy to describe.

%%%%%%%%%%%%%%%%%%%%%%%%%%%%%%%%%%%%%%%%%%%%%%%%%%%%%%%%%%%%%%%%%%%%%%%%%
\section[Classical affine types]{Classical affine types $\widetilde{B},\widetilde{C},\widetilde{D}$%, $B$, $D$
}
\label{sec:bcd}
%%%%%%%%%%%%%%%%%%%%%%%%%%%%%%%%%%%%%%%%%%%%%%%%%%%%%%%%%%%%%%%%%%%%%%%%%

In this section we classify FC elements in types $\aff{C}_n, \aff{B}_{n+1}$, and $\aff{D}_{n+2}$ thanks to their heaps. As could be perhaps expected, the problem is much subtler than in  type $\aff{A}_{n-1}$ and in particular several kinds of elements appear. The pleasant part about our point of view is that we are able to formulate our proof so that the same one essentially works for all three cases.

In each case we will show that the growth sequence of FC elements is ultimately periodic, and we will be able to compute the generating function. Finally, we can easily specialize our characterizations to finite types $B_n$ and $D_{n+1}$, and obtain enumerative results.

\begin{center}
\includegraphics{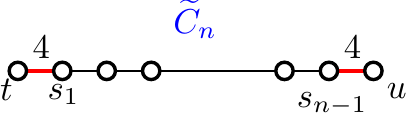}
\end{center}
%%%%%%%%%%%%%%%%%%%%%%%%
\subsection[Fully commutative heaps]{Fully commutative heaps of type $\aff{C}_n$}
\label{sub:Caffine}

We need a couple of notations. A {\em peak} is a heap of the form $P_{\rightarrow}(s_i):=\H (s_i s_{i+1}\dots s_{n-1}u s_{n-1}\dots s_{i+1}s_i)$ or $P_{\leftarrow}(s_i):=\H (s_i s_{i-1}\dots s_{1}t s_{1}\dots s_{i-1}s_i)$. If $H$ is a heap  of type $\aff{C}_n$ and $i\in\{1,\ldots,n-1\}$, then $H_{\{\leftarrow s_i\}}$ ({\em resp.} $H_{\{\rightarrow s_i\}}$) denotes the restriction of $H$ to the labels $\{t,s_1,s_2\ldots,s_i\}$ ({\em resp.} $\{s_i,s_{i+1},\ldots,s_{n-1},u\}$).

\begin{definition}\label{def:famillesCtilde}  We define the five following families of heaps of type $\aff{C}_n$.

\noindent \textbf{(ALT)} {\em Alternating heaps}. $H\in $ (ALT) if it is alternating in the sense of Definition~\ref{defi:alternating} where $\Gamma$ is the Coxeter diagram $\aff{C}_n$.

\noindent \textbf{(ZZ)} {\em Zigzags}. $H\in $ (ZZ) if $H=\H(\mathbf{w})$ where $\mathbf{w}$ is a finite factor of the infinite word $\left(ts_1s_2\cdots s_{n-1}us_{n-1}\cdots s_2s_1\right)^\infty$ such that $|H_{s_i}| \geq 3$ for at least one $i\in\{1,\dots,n-1\}$.

 \noindent \textbf{(LP)}  {\em Left-Peak}. $H\in $(LP) if there exists $j \in\{1,\dots,n-1\}$ such that:
\begin{enumerate}
\item  $H_{\{\leftarrow s_j\}}=P_{\leftarrow}(s_j)$;
\item If $j\neq n-1$ then there is no $s_{j+1}$-element between the two $s_j$-elements; if $j=n-1$ then there is no $u$-element between the two $s_{n-1}$-elements;
\item  $H_{\{{s}_{j}\rightarrow \}}$ is alternating when one $s_j$-element is deleted.
\end{enumerate}
\noindent \textbf{(RP)} {\em Right-Peak}. $H\in $ (RP) if there exists $k \in\{1,\dots,n-1\}$ such that:
\begin{enumerate}
\item $H_{\{s_k \rightarrow\}}= P_{\rightarrow}(s_k)$;
%\item $H_{\{s_{k-1},s_{k}\}}=\H([s_{k-1}]s_ks_k[s_{k-1}])$;
\item If $k\neq 1$ then there is no $s_{k-1}$-element between the two $s_k$-elements; if $k=1$ then there is no $t$-element between the two $s_1$-elements. 
\item $H_{\{\leftarrow {s}_{k}\}}$ is alternating when one $s_k$-element is deleted.
\end{enumerate}
\noindent \textbf{(LRP)} {\em Left-Right-Peak}. $H\in $ (LRP) if there exist $1\leq j<k \leq n-1$ such that:
\begin{enumerate}
\item  LP(1), LP(2), RP(1), RP(2) hold;
\item  $H_{\{s_{j},\ldots,s_{k}\}}$ is  alternating when both a $s_j$- and a $s_k$-element are deleted.
\end{enumerate}
\end{definition}

\begin{figure}[t]
\includegraphics[width=0.98 \textwidth]{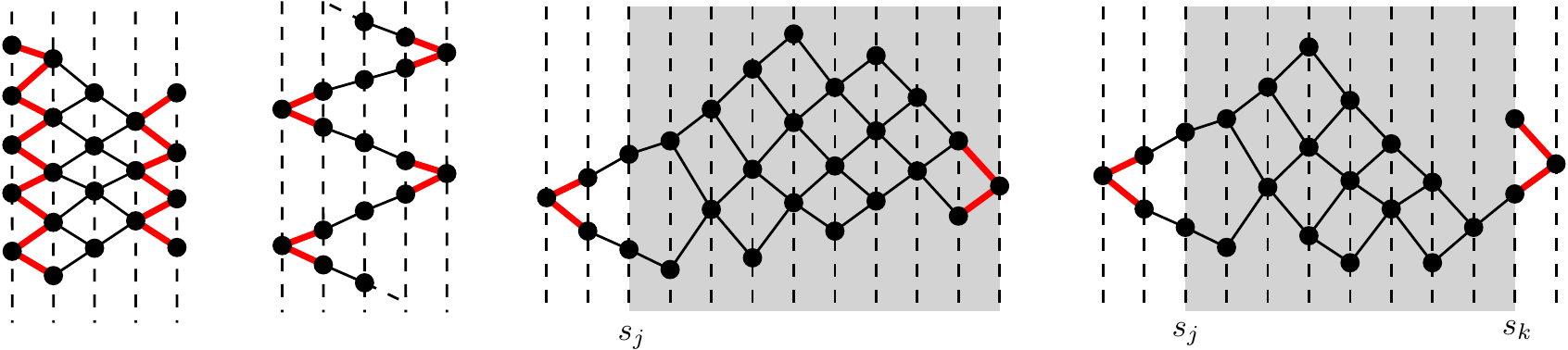}
\caption{Examples  of heaps of type  $\aff{C}_n$ in families (ALT), (ZZ), (LP), (LRP).}
\end{figure}

Before stating the main theorem, we make a couple of important remarks on these definitions.

\begin{remark}
\label{rem:familyZZ}
The condition $|H_{s_i}|\geq 3$ in the definition of (ZZ) is only there to ensure that the families are disjoint, as we will see in Proposition~\ref{prop:sufficient}. %It can be stated equivalently by the condition that the heap has size at least $2n+1$.
\end{remark}

\begin{remark}
\label{rem:familyPeaks}
In families (LP), (RP), (LRP), the indices $j$ and $k$ are {\em uniquely determined}; this will be particularly useful for enumerating purposes. 

 The \emph{extremal cases} for these families are the heaps having no chain $H_{\{s_i,s_{i+1}\}}$ which alternates for $i\in\{1,\ldots,n-2\}$. They correspond to $j=n-2$ or $n-1$ in the case of family (LP); $k=1$ or $2$ for the family (RP); and $k-j=1$ or $2$ in the case of family (LRP). These possibilities are illustrated in Figure~\ref{fig:extremal_heaps}, up to obvious horizontal or vertical symmetries.
\end{remark}

We can now state the main theorem of this section.

\begin{theorem}[Classification of FC heaps of type $\aff{C}_n$]\label{theo:affineCfamilles}
A heap of type $\aff{C}_n$ is  fully commutative if and only if it belongs to one of the five families (ALT), (ZZ), (LP), (RP), (LRP).
\end{theorem}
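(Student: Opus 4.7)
The argument splits into the two directions of the equivalence. For the sufficiency direction, I would verify for each of the five families that both conditions of Stembridge's Proposition~\ref{prop:heaps_fullycom} hold. Family (ALT) is immediate from Proposition~\ref{prop:altFCheaps}, since the two leaves $t$ and $u$ of the diagram satisfy $m_{ts_1} = m_{s_{n-1}u} = 4 > 3$. Family (ZZ) is alternating by construction, and the form of a zigzag factor directly prevents any convex chain of length $4$ on the end-edges. For (LP), (RP), and (LRP), the peaks $P_{\leftarrow}(s_j)$ and $P_{\rightarrow}(s_k)$ are themselves reduced expressions of longest elements of type-$A$ parabolics, hence FC heaps; the additional compatibility items LP(2)--LP(3), RP(2)--RP(3), and LRP(2) then ensure that attaching the alternating piece on the opposite side creates no new convex braid chain.

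For the necessity direction, let $H$ be an FC heap of type $\aff{C}_n$. The plan is a case analysis based on the set of \emph{collision} edges $\{s_i,s_{i+1}\}$ (for $1 \leq i \leq n-2$) with $|H_{\{s_i,s_{i+1}\}}| \geq 3$. If there is no collision edge, then combined with the bound $|H_{\{t,s_1\}}|, |H_{\{s_{n-1},u\}}| \leq 3$ forced by the $m=4$ braid avoidance, one checks $H$ is alternating everywhere and hence lies in (ALT). If a collision exists, I would use a \emph{cascade} argument: a length-$3$ alternating subchain in $H_{\{s_i,s_{i+1}\}}$ cannot be convex in $H$ by Proposition~\ref{prop:heaps_fullycom}(a), so an element of label $s_{i-1}$ or $s_{i+2}$ must interleave it; this element in turn cannot be isolated, and the obstruction propagates outward along the diagram, producing a peak $P_{\leftarrow}(s_j)$ on the left or $P_{\rightarrow}(s_k)$ on the right. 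If the cascade reaches only one leaf, $H$ lands in (LP) or (RP); if two independent cascades reach both leaves at distinct indices $j<k$, it lands in (LRP); if the cascade is dense enough that some $|H_{s_i}| \geq 3$, the structure matches globally a factor of the infinite zigzag word $(ts_1\cdots s_{n-1}us_{n-1}\cdots s_1)^\infty$ and $H$ belongs to (ZZ).

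Disjointness of the families, needed for the classification to be unambiguous, follows from Remarks~\ref{rem:familyZZ} and~\ref{rem:familyPeaks}: the condition $|H_{s_i}| \geq 3$ in (ZZ) rules out overlaps with the peak families, while the uniqueness of $j$ and $k$ separates (LP), (RP) and (LRP). The main obstacle is the cascade argument for necessity: one must carefully track how local non-convexity forces an expanding peak structure, including how the cascade terminates at extremal indices (the cases $j=n-2,n-1$ and $k=1,2$ from Remark~\ref{rem:familyPeaks}), and simultaneously verify that conditions (2) and (3) in the peak family definitions are exactly what the FC constraint imposes. This requires an induction along the diagram whose bookkeeping is nontrivial, especially when two independent peaks coexist and the alternating interior segment between them must be shown to respect both compatibility conditions without creating a new braid chain.
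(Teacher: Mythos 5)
Your overall shape (check Stembridge's conditions for sufficiency; a case analysis driving non-alternating behaviour into peaks and zigzags for necessity) is the right one, but as written the proposal contains concrete errors and leaves the genuinely hard steps unproved. In the easy direction, the justification that the peaks $P_{\leftarrow}(s_j)$, $P_{\rightarrow}(s_k)$ are ``reduced expressions of longest elements of type-$A$ parabolics, hence FC'' is wrong twice over: the parabolics involved contain $t$ or $u$ and are of type $B$, and in any case longest elements of type-$A$ parabolics of rank at least $2$ (e.g.\ $s_1s_2s_1$) are \emph{not} fully commutative; the correct (and easy) route is the direct verification of Proposition~\ref{prop:heaps_fullycom} as in Proposition~\ref{prop:sufficient}. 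More seriously, your case split for necessity is incorrect. The bound $|H_{\{t,s_1\}}|,|H_{\{s_{n-1},u\}}|\leq 3$ is not forced by the $m=4$ braid avoidance: only \emph{convex} alternating chains of length $4$ are forbidden, and heaps in (ALT) have arbitrarily long chains $H_{\{t,s_1\}}$. Moreover ``no collision edge $\Rightarrow H\in$ (ALT)'' is false: for $n\geq 3$ the heap $\H(s_1ts_1)$ has $|H_{\{s_i,s_{i+1}\}}|\leq 2$ for all $i\leq n-2$, yet it is not alternating (its chain $H_{\{s_1,s_2\}}$ reads $s_1s_1$) and it lies in (LP) with $j=1$. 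A workable dichotomy is instead: existence of an alternating chain $H_{\{s_{i_0},s_{i_0+1}\}}$, versus existence of a label with $|H_{s_{i_1}}|\geq 3$, versus neither — the last case producing a finite list of exceptional heaps (those of Figure~\ref{fig:extremal_heaps}) that must be classified by hand; your division never isolates this finite case.

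The ``cascade argument'' is exactly where the content of the theorem lies, and it is only gestured at. Making it precise amounts to stating and proving the three lemmas the paper isolates: the forbidden factors $s_is_{i-1}s_is_i$, $s_is_is_{i-1}s_i$, $s_is_is_i$ in $H_{\{s_{i-1},s_i\}}$ (Lemma~\ref{lemma:forbiddenC}), the Peak Lemma (Lemma~\ref{lemma:peakC}) showing that a factor $s_is_i$ forces the whole interval between the two elements to be a peak, and the Zigzag Lemma (Lemma~\ref{lemma:zigzagC}) showing that a peak together with a third $s_i$-element forces the entire heap to be a zigzag. In particular your sentence ``the structure matches globally a factor of the infinite zigzag word'' is precisely the Zigzag Lemma, whose proof requires excluding, letter by letter, every possible extension of the convex zigzag chain; nothing in the proposal supplies this, nor the propagation-by-induction along the branch that proves the other two lemmas, nor the verification that conditions (2)--(3) of (LP)/(RP)/(LRP) are exactly what remains once the peak is located. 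Since you yourself flag this bookkeeping as the main obstacle and do not carry it out, and since the case division you would hang it on is flawed, the necessity direction is not established.
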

 
We start with the easy direction, by showing that these families contain indeed only fully commutative elements.

\begin{proposition}\label{prop:sufficient} The families (ALT), (ZZ), (LP), (RP), (LRP) are pairwise disjoint, and contain only fully commutative heaps of type $\aff{C}_n$.
\end{proposition}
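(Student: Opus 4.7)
The proposition has two parts: (i) each of the five families consists of FC heaps, and (ii) the five families are pairwise disjoint. For (i) my tool will be Proposition~\ref{prop:heaps_fullycom}, which reduces the FC property to the absence of convex chains realizing a braid relation and of covering relations between equally-labeled elements. For (ii) I will use that the five families can be told apart by local data of the heap, in particular by which of the sub-chains $H_{\{s,s'\}}$ (for adjacent $s,s'$ in the Coxeter graph) alternate, and by the precise value of $|H_t|$ and $|H_u|$.

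The case (ALT) is immediate from Proposition~\ref{prop:altFCheaps}, since the only degree-one vertices of the Coxeter graph $\aff{C}_n$ are $t$ and $u$, both sitting on an edge labeled $4>3$. For (ZZ), I would write $H=\H(\mathbf{w})$ with $\mathbf{w}$ a factor of the infinite word $(ts_1\cdots s_{n-1}us_{n-1}\cdots s_1)^\infty$ and note that every adjacent-label chain $H_{\{s,s'\}}$ can be read off directly from $\mathbf{w}$. Between any two successive same-label elements in the zigzag, the word always traverses a full descent toward the opposite end and back; this descent always provides an intermediate element of $H$ that kills any candidate convex braid chain (of length $3$ at interior edges, length $4$ at the two extremal edges). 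For (LP), I would view $H$ as the peak $P_{\leftarrow}(s_j)$ glued along its two $s_j$-elements to a heap on $\{s_j,s_{j+1},\ldots,u\}$ which becomes alternating once one duplicated $s_j$ is removed. The peak is a classical FC element; the right part is FC by Proposition~\ref{prop:altFCheaps}; and condition (LP2) guarantees that every candidate convex braid chain $s_js_{j+1}s_j$ or $s_{j+1}s_js_{j+1}$ crossing the gluing has one of the two $s_j$-elements of the peak strictly between its extremes, killing convexity. Families (RP) and (LRP) are handled symmetrically, with (RP2) and (LRP2) playing the same role as (LP2).

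For (ii), the principal dichotomy is whether all chains $H_{\{s,s'\}}$ of $H$ alternate. Family (ALT) is exactly the case where this holds everywhere, whereas each of the four other families produces at least one non-alternating such chain: in (ZZ) because $|H_{s_i}|\geq 3$ combined with the zigzag structure forces two consecutive equally-labeled elements in $H_{\{s_i,s_{i\pm 1}\}}$ (or in $H_{\{s_{n-1},u\}}$ when $i=n-1$); in (LP), (RP), (LRP) because the peak directly generates two consecutive equal labels in the adjacent chain (with the extremal cases $j=1,n-1$ or $k=1,n-1$ handled via (LP2) and (RP2)). This already separates (ALT) from the four other families. To separate (ZZ) from the peak families, I would observe that (LP3), (RP3) and (LRP2) force every chain $H_{\{s_l,s_{l+1}\}}$ with $l$ away from the peak indices $j,k$ to be alternating; but a (ZZ) heap with $|H_{s_i}|\geq 3$ always produces such a non-alternating chain outside the peak region, contradicting those conditions. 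Finally, the three peak families are pairwise distinct because of the presence or absence of a left peak and of a right peak, together with the uniqueness of $j$ and $k$ recalled in Remark~\ref{rem:familyPeaks}.

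The main obstacle will be the separation of (ZZ) from the peak families: a long zigzag factor can locally exhibit sub-configurations that look like $P_{\leftarrow}(s_j)$ or $P_{\rightarrow}(s_k)$, so the argument must use the global alternating-after-deletion conditions carefully, locating exactly where the non-alternating chain forced by $|H_{s_i}|\geq 3$ must live and checking that it lies inside the region where the peak-family definition already requires alternation.
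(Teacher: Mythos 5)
Your part (i) follows the paper's own route: (ALT) is dispatched by Proposition~\ref{prop:altFCheaps}, (ZZ) by direct inspection of the zigzag, and for the peak families one only has to check the junction chains in the labels $s_j,s_{j+1}$ (\emph{resp.} $s_{k-1},s_k$), where conditions (LP)(2)/(RP)(2) kill the bad convex chains; this is sound (one bookkeeping slip: for (LRP) the conditions playing this role are LP(2) and RP(2), which are part of (LRP)(1), while (LRP)(2) is the middle-alternation condition analogous to (LP)(3), not to (LP)(2)). Your separation of (ALT) from the other four families, and of the three peak families from each other, also matches the paper.

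The genuine gap is in your separation of (ZZ) from (LP), (RP), (LRP). You propose to contradict (LP)(3)/(RP)(3)/(LRP)(2) by exhibiting, in a (ZZ) heap with $|H_{s_{i_0}}|\geq 3$, a non-alternating chain $H_{\{s_l,s_{l+1}\}}$ inside the region where those conditions force alternation. This works only when the peak indices are non-extremal. If $H$ were in (LP) with $j\in\{n-2,n-1\}$ (and symmetrically $k\in\{1,2\}$ for (RP), $k-j\in\{1,2\}$ for (LRP)), the alternation conditions constrain no chain $H_{\{s_l,s_{l+1}\}}$ with $1\leq l\leq n-2$ at all, so the non-alternating chains forced by the zigzag never land in the forced-alternation region and no contradiction arises this way; your closing paragraph flags this as the main obstacle, but the check you propose there is precisely the one that fails in these cases. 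What is missing is the multiplicity bound: in the extremal cases, condition (1) (the left, \emph{resp.} right, part is exactly a peak) together with condition (3) forces $|H_{s_i}|\leq 2$ for every $i\in\{1,\ldots,n-1\}$ --- this is the content of Remark~\ref{rem:familyPeaks} and the inspection of Figure~\ref{fig:extremal_heaps} --- which contradicts the requirement $|H_{s_{i_0}}|\geq 3$ in the definition of (ZZ) (indeed, ensuring this disjointness is the sole purpose of that clause, cf.\ Remark~\ref{rem:familyZZ}). Adding this observation closes the gap; without it your disjointness argument is incomplete.
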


\begin{proof}
Let $H$ be a heap of type $\aff{C}_n$ belonging to one of the five families. If $H\in$ (ALT), we have by Proposition~\ref{prop:altFCheaps} that it is a FC heap. Now condition $(b)$ of Proposition~\ref{prop:heaps_fullycom} is easily verified by inspection for all families, so we must verify condition $(a)$. This is clear for (ZZ). For the three remaining families, a bad convex chain neither  occurs inside the peak parts, nor inside the alternating parts. The only chains to verify are those occurring at the junction of these parts, and  concern labels $s_{j}$ and $s_{j+1}$ (families (LP) and (LRP)) and labels $s_{k}$ and $s_{k-1}$ (families (RP) and (LRP)). It is an easy verification that no bad convex chain occurs, which concludes the proof that all heaps are indeed FC.

To see that the families are disjoint, we notice that (ALT) is the only family containing alternating heaps. Also (ZZ) is the only family with no alternating chain $H_{\{s_i,s_{i+1}\}}$ and at least three $s_{i_0}$-element for a certain $s_{i_0}$: indeed, if $H$ is in (LP), (RP) or (LRP), it has no alternating chain only in the extreme cases singled out in Remark~\ref{rem:familyPeaks}, the corresponding possible heaps being illustrated in Figure~\ref{fig:extremal_heaps}. By inspection, one notes that for any $i\in\{1,\ldots,n-1\}, |H_{s_i}| \leq 2$. Finally, the families (LP), (RP) or (LRP) are pairwise disjoint since they are easily distinguished by comparing their peaks.
\end{proof}

We now collect three lemmas, which will be useful in the proof of Theorem~\ref{theo:affineCfamilles}.

\begin{lemma}\label{lemma:forbiddenC}
  Let $H$ be a FC heap of type $\aff{C}_n$, and $i\in \{2,\dots,n-1\}$.  Then in $H_{\{s_{i-1},s_{i}\}}$ there is no factor equal to: 
\begin{center} (1) $s_{i}s_{i-1}s_{i}s_{i}$,  (1') $s_{i}s_{i}s_{i-1}s_{i}$, or (2) $s_{i}s_{i}s_{i}$. \end{center}
\end{lemma}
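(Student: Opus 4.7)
The plan is to prove the contrapositive: if $H_{\{s_{i-1},s_i\}}$ contains any of the factors $(1)$, $(1')$, or $(2)$, then $H$ must violate condition~$(a)$ of Proposition~\ref{prop:heaps_fullycom}, contradicting full commutativity. In each case, at least one pair of consecutive $s_i$-elements of $H_{\{s_{i-1},s_i\}}$, call them $s_i^{(a)}<s_i^{(a+1)}$, has no $s_{i-1}$-element between them, and I begin by analyzing such a pair. Set $r_i:=s_{i+1}$ if $i\leq n-2$ and $r_i:=u$ if $i=n-1$. By Proposition~\ref{prop:heaps_fullycom}(b), $s_i^{(a+1)}$ cannot cover $s_i^{(a)}$, so there exists an up-cover $s_i^{(a)}\prec y$ with $y<s_i^{(a+1)}$ in $H$. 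The label $\epsilon(y)$ is adjacent to and distinct from $s_i$, so it lies in $\{s_{i-1},r_i\}$; the value $s_{i-1}$ is ruled out since such a $y$ would sit strictly between the two $s_i$'s in $H_{\{s_{i-1},s_i\}}$, contradicting consecutivity. Hence $\epsilon(y)=r_i$. Because the $r_i$-elements form a totally ordered chain, $y$ is the unique up-cover of $s_i^{(a)}$, and by a symmetric argument the unique down-cover of $s_i^{(a+1)}$. This forces $[s_i^{(a)},s_i^{(a+1)}]=\{s_i^{(a)},y,s_i^{(a+1)}\}$, so $s_i^{(a)}\prec y\prec s_i^{(a+1)}$ is a convex chain of length~$3$ with alternating labels $s_i,r_i,s_i$.

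If $i\leq n-2$, then $m_{s_i,r_i}=3$ and this chain directly violates Proposition~\ref{prop:heaps_fullycom}(a). The remaining case $i=n-1$ requires a convex alternating chain of length~$4=m_{s_{n-1},u}$. For factor~$(2)$, both consecutive pairs of $s_{n-1}$'s qualify and iterating the above yields convex chains $s_{n-1}^{(p)}\prec y_1\prec s_{n-1}^{(p+1)}$ and $s_{n-1}^{(p+1)}\prec y_2\prec s_{n-1}^{(p+2)}$ with $y_1,y_2$ labeled $u$. Uniqueness of the up-cover of $s_{n-1}^{(p)}$ and of the down-cover of $y_2$, combined with the convexity of each length-$3$ subchain, forces $[s_{n-1}^{(p)},y_2]=\{s_{n-1}^{(p)},y_1,s_{n-1}^{(p+1)},y_2\}$. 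Hence $(s_{n-1}^{(p)},y_1,s_{n-1}^{(p+1)},y_2)$ is a convex alternating chain of length~$4$, contradicting~$(a)$.

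For factor~$(1)$, only one qualifying pair exists, giving only the length-$3$ convex chain $s_{n-1}^{(a)}\prec y\prec s_{n-1}^{(a+1)}$ with $\epsilon(y)=u$. I extend it using the other pair of $s_{n-1}$'s from the factor, $s_{n-1}^{(a-1)}<s_{n-1}^{(a)}$, whose only intermediary in $H_{\{s_{n-2},s_{n-1}\}}$ is some $s_{n-2}^{(q)}$, and split on whether $H_{\{s_{n-1},u\}}$ contains a $u$-element strictly between $s_{n-1}^{(a-1)}$ and $s_{n-1}^{(a)}$. If not, the same cover analysis applied to this pair (now forcing $s_{n-2}^{(q)}$ as the unique intermediary in $H$) produces a convex chain $s_{n-1}^{(a-1)}\prec s_{n-2}^{(q)}\prec s_{n-1}^{(a)}$ with alternating labels $s_{n-1},s_{n-2},s_{n-1}$ and $m=3$, violating~$(a)$. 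If there is such a $u$, let $y'$ be the topmost one; then $(y',s_{n-1}^{(a)},y,s_{n-1}^{(a+1)})$ has alternating labels $u,s_{n-1},u,s_{n-1}$. The main technical obstacle is verifying the convexity of this last chain in the presence of the extra element $s_{n-2}^{(q)}$: one shows that the unique up-cover of $y'$ is $s_{n-1}^{(a)}$ (no $s_{n-1}$-element sits strictly between $y'$ and $s_{n-1}^{(a)}$ in $H_{\{s_{n-1},u\}}$), which rules out $y'<s_{n-2}^{(q)}$ in $H$ and, together with the cover structure, forces $[y',s_{n-1}^{(a+1)}]=\{y',s_{n-1}^{(a)},y,s_{n-1}^{(a+1)}\}$. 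Hence the chain is convex and violates~$(a)$. Case $(1')$ follows from $(1)$ by passing to the dual heap, which is FC since the conditions in Proposition~\ref{prop:heaps_fullycom} are preserved under order-reversal, and noting that dualization interchanges factors $(1)$ and $(1')$.
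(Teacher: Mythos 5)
There is a genuine gap, and it sits at the heart of your argument: the claim that for a consecutive pair $s_i^{(a)}<s_i^{(a+1)}$ (no $s_{i-1}$-element between them) one gets $[s_i^{(a)},s_i^{(a+1)}]=\{s_i^{(a)},y,s_i^{(a+1)}\}$ is false whenever $i\leq n-2$. Uniqueness of the $r_i$-labeled up-cover of $s_i^{(a)}$ and of the $r_i$-labeled down-cover of $s_i^{(a+1)}$ does not force these two covers to coincide, nor the interval to have three elements: a saturated chain can have a unique cover at each end and still be long, and the elements strictly between the two $r_i$-covers carry labels $s_{i+2},\dots,s_{n-1},u$ that are not adjacent to $s_i$, so they are invisible to your local cover analysis. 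Concretely, the heap $\H(s_i s_{i+1}\cdots s_{n-1}u s_{n-1}\cdots s_{i+1}s_i)$ is fully commutative (it is exactly the peak $P_{\rightarrow}(s_i)$, which occurs inside the FC families (LP), (RP), (LRP), (ZZ) of Theorem~\ref{theo:affineCfamilles}); in it the two $s_i$-elements are consecutive in $H_{\{s_{i-1},s_i\}}$, yet the interval between them has $2(n-1-i)+3$ elements and the chain $s_i\prec s_{i+1}\prec s_i$ is not convex. If your step were correct it would prove that no FC heap of type $\aff{C}_n$ even contains the factor $s_is_i$ in $H_{\{s_{i-1},s_i\}}$ for $i\leq n-2$, contradicting the existence of these peak families. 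So the main case $i\leq n-2$ of your proof collapses; note also that in that case you never use the third $s_i$-element of the factors (1), (1$'$), (2), which is a warning sign, since the mere pair $s_is_i$ is not forbidden.

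What actually forces a contradiction is a propagation along the branch toward the degree-one end where the edge label is $4$: between two consecutive $s_i$-elements there must be at least two $s_{i+1}$-elements, and between two $s_i$-elements separated by exactly one $s_{i-1}$-element at least one $s_{i+1}$-element (otherwise a convex $s_is_{i-1}s_i$ appears). Hence a factor of type (1) or (1$'$) in $H_{\{s_{i-1},s_i\}}$ produces one of the same type in $H_{\{s_i,s_{i+1}\}}$, and iterating (this is the content of Lemma~\ref{lemma:local} and the induction in Lemma~\ref{lemma:forbidden}) one reaches $i=n-1$, where the convex chain alternating in $s_{n-1}$ and $u$ of length $4=m_{s_{n-1}u}$ violates Proposition~\ref{prop:heaps_fullycom}(a); type (2) reduces to (1) and (1$'$) one level up. Your analysis of the case $i=n-1$ is essentially this base case and is broadly sound (there the interval between consecutive $s_{n-1}$-elements really is $\{s_{n-1},u,s_{n-1}\}$, precisely because $s_{n-1}$ sits at the end of the branch), but without the reduction along the branch the lemma for $i\leq n-2$ remains unproved.
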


\begin{lemma}[Peak Lemma]\label{lemma:peakC}
Suppose that there exists $i\in \{2,\ldots, n-1\}$ such that $H_{\{s_{i-1},s_{i}\}}$ contains the factor $s_{i}s_{i}$. Then the interval of $H$ between the two corresponding $s_{i}$-elements is isomorphic to a peak $P_{\rightarrow}(s_i)$.
\end{lemma}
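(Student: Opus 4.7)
The plan is a descending induction on $i\in\{2,\dots,n-1\}$, with base case $i=n-1$. Throughout, let $x_1<x_2$ denote the two $s_i$-elements forming the factor $s_is_i$ in $H_{\{s_{i-1},s_i\}}$, so that neither $s_{i-1}$- nor $s_i$-elements of $H$ lie strictly between them. I will repeatedly use the following label restriction: every element of the open interval $(x_1,x_2)$ carries a label in $\{s_{i+1},s_{i+2},\dots,s_{n-1},u\}$. This follows by an easy induction on chains from $x_1$ staying inside $[x_1,x_2)$: the first step must be to an element whose label is adjacent to $s_i$ but distinct from $s_{i-1}$ and $s_i$, hence labelled $s_{i+1}$, and each subsequent step can never revisit $s_i$ nor go below it.

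For the base case $i=n-1$, only the label $u$ is available inside $(x_1,x_2)$. Condition $(b)$ of Proposition~\ref{prop:heaps_fullycom} forbids $x_1\prec x_2$ as a covering relation, so at least one $u$-element $w$ lies in between; and a second $u$-element $w_2>w_1$ would again, by $(b)$, require an intermediate element whose label is adjacent to $u$, necessarily $s_{n-1}$, which is excluded. Thus $[x_1,x_2]=\{x_1,w,x_2\}$, which is precisely $P_{\rightarrow}(s_{n-1})=\H(s_{n-1}us_{n-1})$.

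For the inductive step with $i\leq n-2$, enumerate the $s_{i+1}$-elements of $(x_1,x_2)$ as $z_1<\cdots<z_k$; the plan is to show $k=2$. Condition $(b)$ gives $k\geq 1$ as above. Next, $k\leq 2$ follows from Lemma~\ref{lemma:forbiddenC} applied at level $i+1$: the restricted chain $H_{\{s_i,s_{i+1}\}}$ reads $(x_1,z_1,\dots,z_k,x_2)$ between $x_1$ and $x_2$, and a factor $s_{i+1}s_{i+1}s_{i+1}$ is forbidden there. Finally $k\neq 1$, for otherwise the label restriction forces $(x_1,x_2)=\{z_1\}$, yielding a convex chain $(x_1,z_1,x_2)$ with labels $s_is_{i+1}s_i$, forbidden by Proposition~\ref{prop:heaps_fullycom}$(a)$ since $m_{s_is_{i+1}}=3$. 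Therefore $k=2$, and $z_1,z_2$ form a consecutive $s_{i+1}s_{i+1}$ factor in $H_{\{s_i,s_{i+1}\}}$, so the inductive hypothesis at $i+1$ yields $[z_1,z_2]\cong P_{\rightarrow}(s_{i+1})$. A further application of the label restriction — any $y\in(x_1,x_2)$ with label in $\{s_{i+2},\dots,u\}$ is sandwiched by some $z_m\leq y\leq z_{m'}$ through chains via $s_{i+1}$-elements, and label mismatch rules out $m=m'$ — shows $(x_1,x_2)=[z_1,z_2]$. Adjoining $x_1$ at the bottom and $x_2$ at the top realises $[x_1,x_2]$ as $P_{\rightarrow}(s_i)$, as desired.

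The step I would expect to be the most delicate, were Lemma~\ref{lemma:forbiddenC} unavailable, is the bound $k\leq 2$. A direct argument would need to rule out three nested peaks $[z_1,z_2],[z_2,z_3]\cong P_{\rightarrow}(s_{i+1})$ by exhibiting a forbidden convex chain, most naturally $(a_2,z_2,a_3)$ with labels $s_{i+2}s_{i+1}s_{i+2}$, or a length-$4$ chain $(u_1,z_2,u_2,z_3)$ when $i=n-2$. Establishing convexity would force one to exclude ``external'' heap elements, with labels $\geq s_{i+3}$ and only transitively comparable to $z_2$, from the interval $[a_2,a_3]$ — a bookkeeping exercise that Lemma~\ref{lemma:forbiddenC} neatly bypasses by reducing the bound to a forbidden local pattern in a single chain.
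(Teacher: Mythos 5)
Your proof is correct and takes essentially the same route as the paper's proof of its more general Lemma~\ref{lemma:peak} (specialized to type $\aff{C}_n$): a descending induction along the branch in which Proposition~\ref{prop:heaps_fullycom} forces at least one and excludes exactly one $s_{i+1}$-element between the two $s_i$-elements, Lemma~\ref{lemma:forbiddenC}(2) excludes three or more, and the inductive hypothesis identifies the inner interval as a peak. The only difference is presentational: your label-restriction and sandwich arguments spell out why the interval contains no further elements, a point the paper's terse proof leaves implicit.
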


\begin{lemma}[Zigzag Lemma]
\label{lemma:zigzagC}
Let $H$ be a FC heap of type $\aff{C}_n$ having two  $s_i$-elements delimitating an interval of the form $P_{\rightarrow}(s_i)$ or $P_{\leftarrow}(s_i)$ for some $i\in \{1,\dots,n-1\}$. If $H$ contains at least another $s_i$-element, then $H \in$ (ZZ).
\end{lemma}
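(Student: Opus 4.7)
The plan is to exploit the diagram symmetry of $\aff{C}_n$ and the dual heap operation to reduce to one orientation, and then propagate the zigzag shape above the given peak by repeated application of Lemma~\ref{lemma:forbiddenC} together with Proposition~\ref{prop:heaps_fullycom}. By the symmetry of the Coxeter graph of $\aff{C}_n$ exchanging $t\leftrightarrow u$ and $s_j\leftrightarrow s_{n-j}$, I may assume the given interval has shape $P_\rightarrow(s_i)$, with bottom and top $s_i$-elements $\alpha<\beta$. Replacing $H$ by its dual heap if necessary, I may further assume the additional $s_i$-element $\gamma$ satisfies $\gamma>\beta$, and I pick $\gamma$ to be the smallest such.

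First I would show that the element of $H$ strictly between $\beta$ and $\gamma$ has label $s_{i-1}$. Since $\beta\prec\gamma$ cannot be a covering relation (Proposition~\ref{prop:heaps_fullycom}(b)), some element strictly between them has label adjacent to $s_i$, i.e.\ $s_{i-1}$ or $s_{i+1}$. But within the peak, the restricted chain $H_{\{s_i,s_{i+1}\}}$ already ends, just below $\beta$, with the pattern $s_{i+1}\,s_{i+1}\,s_i$ (the two $s_{i+1}$'s being those flanking the apex $u$). Inserting an $s_{i+1}$-element just above $\beta$ would thus create in $H_{\{s_i,s_{i+1}\}}$ the factor $s_{i+1}\,s_{i+1}\,s_i\,s_{i+1}$, forbidden by Lemma~\ref{lemma:forbiddenC}(1') applied with $i$ replaced by $i+1$. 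Hence the element is an $s_{i-1}$.

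Then I would proceed by a descending induction, showing that above $\beta$ and below $\gamma$ the heap $H$ contains the convex chain of labels $s_{i-1},s_{i-2},\ldots,s_1,t,s_1,\ldots,s_{i-1}$, making the $[\beta,\gamma]$-interval isomorphic to $P_\leftarrow(s_i)$. At each stage, having reached $s_j$, Lemma~\ref{lemma:forbiddenC} (applied with the appropriate shift of indices) rules out both the ``back-tracking'' continuation $s_{j+1}$ and any repeated $s_j$, so the only admissible next label is $s_{j-1}$. When $j=1$ is reached, the $4$-edge $\{t,s_1\}$ forces exactly one $t$-element, since two $t$'s would produce the forbidden length-$4$ braid factor $s_1ts_1t$ in $H_{\{t,s_1\}}$. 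A symmetric ascending induction then fills $s_1,s_2,\ldots,s_{i-1}$ up to $\gamma$. Iterating this argument above $\gamma$ (if $H$ has further $s_i$-elements) and, via the dual heap, below $\alpha$, exhibits $H$ as a finite convex sub-heap of $\H\bigl((ts_1\cdots s_{n-1}us_{n-1}\cdots s_1)^\infty\bigr)$, i.e.\ a finite factor of this periodic word. Since $|H_{s_i}|\geq 3$ (from $\alpha,\beta,\gamma$), we conclude $H\in$~(ZZ).

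The main obstacle is the inductive descent: one must carefully verify at each stage that the chain being constructed is genuinely convex in $H$ (no element of $H$ with a far-away label sneaks in between consecutive chain elements), and separately handle the two distinct boundary constraints — the $3$-edges in the interior of the diagram (handled uniformly by Lemma~\ref{lemma:forbiddenC}) versus the $4$-edge $\{t,s_1\}$ at the foot (which needs the direct FC avoidance of the length-$4$ braid word). The dual cases $\gamma<\alpha$ and the $P_\leftarrow$-peak hypothesis are, by contrast, entirely absorbed by the diagram symmetry and the dual-heap reduction.
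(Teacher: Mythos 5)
Your first half follows the paper's route: you rule out an $s_{i+1}$-element between the top of the peak and the next $s_i$-element via the forbidden factor $s_{i+1}s_{i+1}s_is_{i+1}$ from Lemma~\ref{lemma:forbiddenC}(1'), and then obtain a reversed peak $P_{\leftarrow}(s_i)$ on the interval $[\beta,\gamma]$ (the paper simply quotes the mirrored Peak Lemma~\ref{lemma:peakC} here rather than redoing the descent by hand). One boundary case is missing but easily patched: for $i=n-1$ the neighbour of $s_i$ toward the right end is $u$ with $m_{s_{n-1}u}=4$, so (1') does not apply verbatim and one must instead invoke the forbidden convex braid chain $s_{n-1}\,u\,s_{n-1}\,u$ directly from Proposition~\ref{prop:heaps_fullycom}(a).

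The genuine gap is in your final step. Iterating the peak argument across successive $s_i$-elements only describes the portion of $H$ lying between its extremal $s_i$-elements. Membership in (ZZ) requires the \emph{entire} heap to be (a totally ordered heap equal to) a factor of the periodic word, so you must also show that $H$ has no elements beyond the extremal $s_i$-elements, nor incomparable to the chain, other than those prolonging the zigzag: a priori $H$ could carry, say, an extra $t$-element, an $s_{i-1}$-element, or a second $u$-element attached after the chain, and each of these is only excluded by exhibiting a forbidden convex chain (a braid $s_js_{j+1}s_j$ after commutations, or $s_1ts_1t$, or $s_{n-1}us_{n-1}u$). This is precisely where the paper spends its effort: it observes that the zigzag chain $\mathbf{c}$ formed by the two peaks is convex, writes $H=\H(\mathbf{v}\mathbf{c}\mathbf{w})$, analyses which letters can begin $\mathbf{w}$ (and end $\mathbf{v}$) -- only $s_{i+1}$, or $u$ when $i=n-1$, survives, with a second-stage analysis for the longer chain $\mathbf{c}'$ -- and concludes by induction on $|\mathbf{v}|+|\mathbf{w}|$. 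Your sentence claiming that iterating the interval argument ``exhibits $H$ as a finite convex sub-heap of the periodic word'' asserts exactly this conclusion but supplies no argument for it, so as written the proof does not reach $H\in$ (ZZ).
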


These three lemmas will be proved in Section~\ref{sub:proofs} in a more general context. Note that Lemma~\ref{lemma:zigzagC} is equivalent to~\cite[Lemma 3.3.6]{ErnstDiagramII}.

\begin{figure}[!t]
\includegraphics[width=\textwidth]{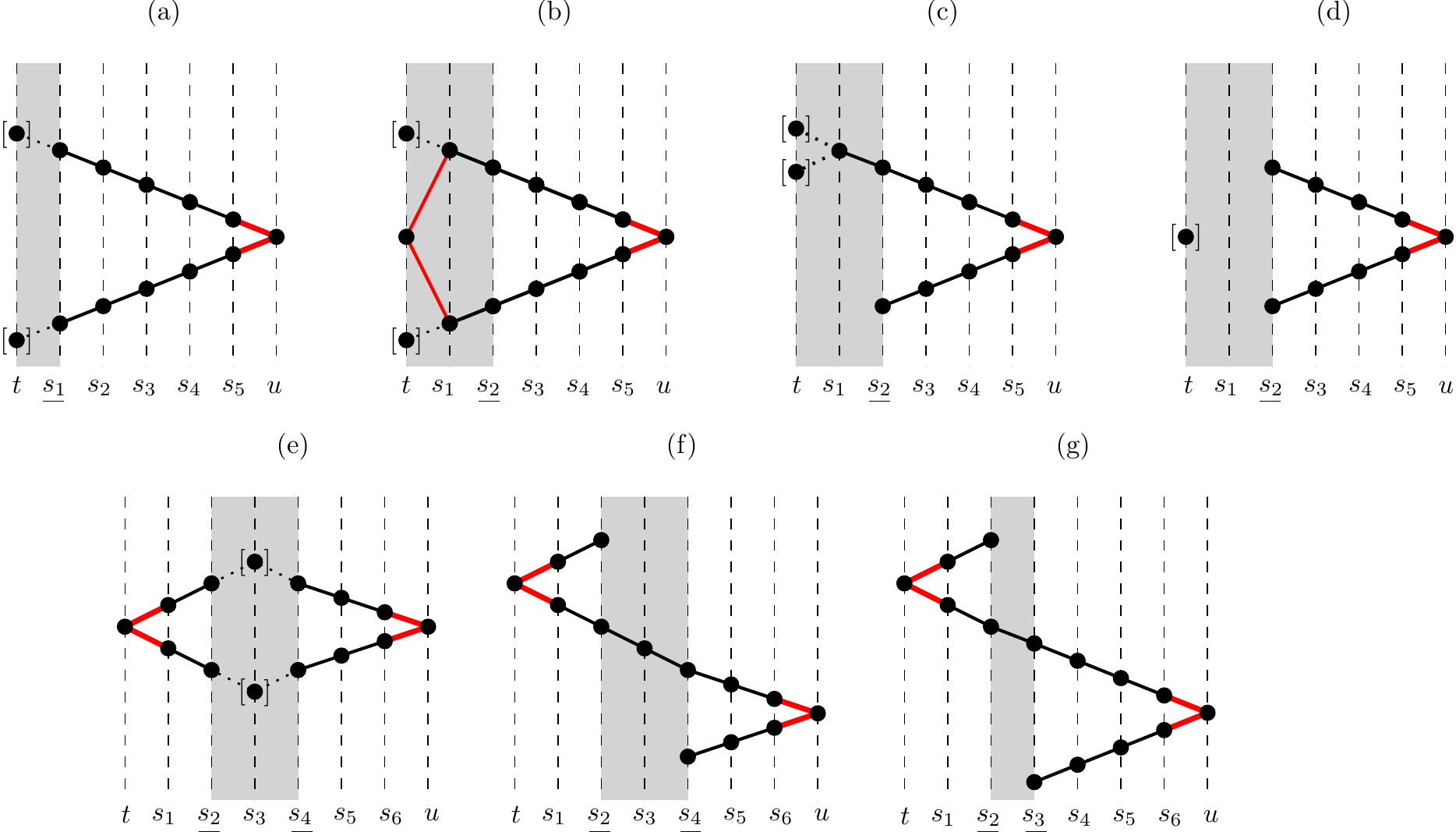}
\caption{Exceptional heaps of type (RP) in the first row, and (LRP) in the second row. \label{fig:extremal_heaps}}
\end{figure}

\begin{proof}[Proof of Theorem~\ref{theo:affineCfamilles}]
By Proposition~\ref{prop:sufficient}, it remains to show that all FC heaps belong to one of the five families of Definition~\ref{def:famillesCtilde}. So let $H$ be a FC heap of type $\aff{C}_n$, and consider the two following properties:
\begin{itemize}
\item[$(P_1)$] There exists $i_0\in \{1,\ldots,n-2\}$ such that $H_{\{s_{i_0},s_{i_0+1}\}}$ is alternating;
\item[$(P_2)$] There exists $i_1\in \{1,\ldots,n-1\}$ such that $|H_{s_{i_1}}|\geq 3$.
\end{itemize}

\noindent $\bullet$ Assume first that $H$ satisfies $(P_1)$. Denote by $j$ the smallest such $i_0$, and let $k \leq n-1$ be the largest index such that 
$H_{\{s_{j} ,\ldots, s_{k}\}}$ is alternating.

If $k=n-1$, so that $H_{\{s_{n-2},s_{n-1}\}}$ is alternating, then the full commutativity of $H$ implies that $H_{\{s_{n-1},u\}}$ is also alternating, and thus we have that $H_{\{s_j\rightarrow\}}$ is alternating.

If $k<n-1$, then  $H_{\{s_{k-1}, s_k\}}$ is alternating, but $H_{\{s_k,  s_{k+1}\}}$ is not. Since $H_{\{s_{k-1}, s_k\}}$
is alternating there exists at least one $s_{k+1}$-element between two $s_k$-elements; and since $H_{\{s_k,  s_{k+1}\}}$ is not alternating there exist two $s_{k+1}$-elements with no $s_k$-element between them. By Lemma~\ref{lemma:forbiddenC}  this is only possible if $|H_{s_{k}}|\leq 2$ and $|H_{s_{k+1}}|=2$. Hence by Lemma~\ref{lemma:peakC}, $H$ verifies conditions (RP)(1) and (2).

Symmetrically, $j=1$ implies $H_{\{\leftarrow s_k\}}$ alternating while $j>1$ implies that $H$ verifies conditions (LP)(1) and (2).
Putting things together, we obtain $H\in {\rm (ALT)}$ if $j=1$ and $k=n-1$; $H\in$ (RP) if $j=1$ and $k<n-1$; $H\in$ (LP) if $j>1$ and $k=n-1$; $H\in$ (LRP) if $j>1$ and $k<n-1$. 

\noindent $\bullet$ Assume now that $(P_2)$ holds but not $(P_1)$. If $i_1>1$, using the fact that $H_{\{s_{i_1},s_{i_1+1}\}}$ is {not} alternating, and the forbidden configurations of Lemma~\ref{lemma:forbiddenC}, there exist necessarily two $s_{i_1}$-elements with no $s_{i_1-1}$-element between them. By Lemma~\ref{lemma:peakC} this implies that the interval between the two $s_{i_1}$-elements is a peak $P_{\rightarrow}(s_{i_1})$, and by Lemma~\ref{lemma:zigzagC} we can conclude that $H \in$ (ZZ). 

\noindent $\bullet$ Assume finally that $H$ satisfies neither $(P_1)$ nor $(P_2)$, so that no chain $H_{\{s_i,s_{i+1}\}}$ is alternating and no $s_i$-element occurs more than twice. There is an $i_0\in\{1,\ldots,n-1\}$ such that $|H_{s_{i_0}}|=2$, otherwise the heap would be alternating. Suppose $i_0<n-1$. Since $H_{\{s_{i_0},s_{i_0+1}\}}$ is not alternating, there is either $0$ or $2$ $s_{i_0+1}$-elements between the two $s_{i_0}$-elements. In the first case we have a peak $P_{\leftarrow}(s_{i_0})$ and in the second we have a peak $P_{\rightarrow}(s_{i_0})$, by applying Lemma~\ref{lemma:peakC}. If $i_0=n-1$, we can apply the same reasoning with $i_0-1$.

So without loss of generality, we have the existence of a peak $P_{\rightarrow}(s_{i_0})$; we choose $i_0$  minimal with this property. It is now easy though a little tedious to identify all possible heaps; we just give the end result, using  the cases (a)-(g) of Figure~\ref{fig:extremal_heaps}. If $i_0=1$, the possibilities are cases (a) and (b). If $i_0>1$, let $m=|H_{s_{i_0-1}}|\in\{0,1,2\}$. If $m=0$, the possibilities are (d) if $i_0=2$, and (e) otherwise. If $m=1$, the possibilities are (c) if $i_0=2$, and (e) or (f) otherwise. Finally for $m=2$ the possibilities are (e) or (g). So we always obtain  heaps belonging to (LP), (RP) or (LRP), which achieves the proof.
\end{proof} 

\subsection{Fully Commutative elements of types  $\aff{B}_{n+1}$ and $\aff{D}_{n+2}$}
\label{sub:FCaffBD}

We will define some operations of substitution, which we use to characterize FC heaps in types $\aff{B}_{n+1}$ and $\aff{D}_{n+2}$ starting from FC heaps in type $\aff{C}_n$. The result can be summarized as follows: a FC heap of type $\aff{B}_{n+1}$  always comes from a FC heap of type $\aff{C}_n$ in which $t$-elements were replaced by elements labeled $t_1,t_2$ or $t_1t_2$, while a FC heap of type $\aff{D}_{n+2}$  always comes from a FC heap of type $\aff{C}_n$ in which additionally $u$-elements were replaced by elements labeled $u_1,u_2$ or $u_1u_2$. We now make these replacements precise.

\begin{center}
\includegraphics{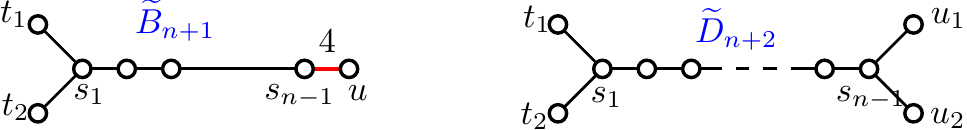}
\end{center}

{\noindent \bf Substitutions $\Delta_t$ and $\Delta_{t,u}$:} Let $H$ be a FC heap of type $\aff{C}_n$, and we fix a word $\mathbf{w}$ such that $H=\H(\mathbf{w})$ (the operations do not depend on the word $\mathbf{w}$ which is chosen). We define a set $\Delta_t(H)$ which is a collection of heaps of type $\aff{B}_{n+1}$ deduced from $H$, as follows:

$\bullet$ $H$ is in (LP) or (LRP). There is a unique $t$-element in $H$ coming from a peak $P_{\leftarrow}(s_i)$. In this case $\Delta_t(H):=\{\H(\mathbf{w'})\}$ where $\mathbf{w'}$ is obtained by replacing $t$ by $t_1t_2$ in $\mathbf{w}$.

$\bullet$ $H$ is in (ZZ). It is a chain so $\mathbf{w}$ is uniquely determined. Replace once again $t$ by $t_1t_2$ in $\mathbf{w}$ for all $t$; in the case where  $\mathbf{w}$ starts or ends with $t$, these occurrences can be replaced also by $t_1$ or $t_2 $. Then $\Delta_t(H)$ is the collection of the possible replacements: it has $1$ element if $\mathbf{w}$ neither starts nor ends with $t$, $3$ elements if $\mathbf{w}$ starts or ends with $t$ but not both, and $9$ elements if $\mathbf{w}$ both starts and ends with $t$.

$\bullet$ $H$ is in (ALT) or (RP). If $H$ contains no $t$-element, define $\Delta_t(H):=\{H\}$. If $H$ contains exactly one $t$-element,  $\Delta_t(H):=\{\H(\mathbf{w^1}),\H(\mathbf{w^2}),\H(\mathbf{w^3})\}$ where $\mathbf{w^1}$ (\emph{resp.} $\mathbf{w^2}$, \emph{resp.} $\mathbf{w^3}$) is obtained by replacing $t$ by $t_1t_2$ (\emph{resp.} $t_1$, \emph{resp.} $t_2$) in  $\mathbf{w}$. If $H$ contains more than one $t$-element, then $\Delta_t(H):=\{\H(\mathbf{w^1}),\H(\mathbf{w^2})\}$, where $\mathbf{w^1}$ (\emph{resp.} $\mathbf{w^2}$) is obtained by replacing the occurrences of $t$ from first to last alternatively by $t_1$ and $t_2$, starting with $t_1$  (\emph{resp.}  $t_2$).

There is one special case, $\H(ts_1\ldots s_{n-1}us_{n-1}\cdots s_1t)$ (cf. Figure~\ref{fig:extremal_heaps}(a)), in which $t$-elements can be independently replaced by $t_1$, $t_2$ or $t_1t_2$.

We also define $\Delta_{t,u}(H)$ which is a set of heaps of type $\aff{D}_{n+2}$ where the substitutions above are also performed similarly on $u$-elements.

\begin{definition}\label{def:famillesBDtilde}
If $X\in\{\text{(ALT),(ZZ),(LP),(RP),(LRP)}\}$ is one of the five families of FC heaps of type $\aff{C}_n$ from Definition~\ref{def:famillesCtilde}, we define the corresponding family $X_{\aff{B}}$ (\emph{resp.} $X_{\aff{D}}$) as family $\displaystyle\cup_{H\in X}\Delta_t(H)$ (\emph{resp.} $\displaystyle\cup_{H\in X}\Delta_{t,u}(H)$).
\end{definition}

The families are disjoint, since one can easily recover $H$ from $\Delta_t(H)$ or $\Delta_{t,u}(H)$, and then use Proposition~\ref{prop:sufficient}. 

\begin{theorem}\label{theo:affineBDfamilles}
A  heap of type $\aff{B}_{n+1}$ (\emph{resp.} $\aff{D}_{n+2}$) is fully commutative if and only if it belongs to one of the families of Definition~\ref{def:famillesBDtilde}.
\end{theorem}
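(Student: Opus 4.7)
The plan is to prove the statement in two parts, soundness and completeness, with the $\aff{D}_{n+2}$ case following from the $\aff{B}_{n+1}$ argument applied symmetrically at the other end of the diagram (the two substitutions $\Delta_t$ and $\Delta_{t,u}$ act at opposite ends of the linear subchain $s_1,\ldots,s_{n-1}$ and therefore do not interact).

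For soundness, given $H$ in one of the five families of Definition~\ref{def:famillesCtilde} and $H' \in \Delta_t(H)$, I would verify directly that $H'$ satisfies the two conditions of Proposition~\ref{prop:heaps_fullycom} for the Coxeter graph $\aff{B}_{n+1}$. Condition (b) is preserved because each case in the definition of $\Delta_t$ (substitute $t_1 t_2$; substitute a single $t_1$ or $t_2$; alternate between $t_1$ and $t_2$ starting with either) is tailored precisely to avoid producing two consecutive equal labels in the chains $H'_{\{t_i\}}$ and $H'_{\{t_i, s_1\}}$. For condition (a), the only new forbidden convex chains to check are the length-$3$ braids at the edges $\{t_i, s_1\}$, since $m_{t s_1} = 4$ in $\aff{C}_n$ is replaced by $m_{t_i s_1} = 3$ in $\aff{B}_{n+1}$; all other convex-chain checks carry over from $H$ unchanged. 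The absence of a convex chain $t_i s_1 t_i$ or $s_1 t_i s_1$ in $H'$ follows because the alternation rule in $\Delta_t$ ensures that two consecutive $t_i$-occurrences of the same index never sandwich a single $s_1$, using in turn the fact that $H$ itself avoids $t s_1 t s_1$ as a convex chain (length-$4$ braid in $\aff{C}_n$).

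For completeness, I would define a folding map $\pi$ sending an FC heap $H'$ of type $\aff{B}_{n+1}$ back to a heap $H$ of type $\aff{C}_n$. The chain $H'_{\{s_1\}}$ partitions the $t_1$- and $t_2$-elements of $H'$ into strips (between consecutive $s_1$-elements, or below the first, or above the last). Full commutativity of $H'$ forbids $t_i s_1 t_i$ as a convex chain, and the commutativity $t_1 t_2 = t_2 t_1$ makes $\{t_1,t_2\}$-elements incomparable when there is no intervening $s_1$; together these force each strip to contain at most one $t_1$ and at most one $t_2$. The fold $\pi(H')$ replaces the (at most two) $t_i$-elements in each strip by a single $t$-element, leaving the rest of $H'$ untouched. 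I must then verify (i) that $\pi(H')$ is a valid $\aff{C}_n$-heap, (ii) that it is FC, and (iii) that $H' \in \Delta_t(\pi(H'))$. Once (i) and (ii) are established, Theorem~\ref{theo:affineCfamilles} places $\pi(H')$ in one of the five families, and comparing the structure of each family with the $\{t_1,t_2\}$-configuration strip by strip recovers the precise rule of $\Delta_t$ for that family.

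The main obstacle will be verifying FC-ness of $\pi(H')$, specifically showing that no convex chain $t s_1 t s_1$ or $s_1 t s_1 t$ of length $4$ can appear in $\pi(H')$. I expect to handle this by lifting a putative convex chain of length $4$ back to $H'$: the two collapsed $t$-elements come from two strips, each containing one or two $t_i$'s, and a case analysis on the choice of indices $i\in\{1,2\}$ in each strip will always yield either a convex chain $t_i s_1 t_i$ in $H'$ (contradicting FC of $H'$) or an obstruction forcing some $t_j$ between the two $s_1$'s, contradicting convexity. A secondary subtlety is the exceptional heap of Figure~\ref{fig:extremal_heaps}(a) where the two boundary $t$-elements can be lifted independently into nine configurations; this case is treated separately in the definition of $\Delta_t$ and must be matched carefully in both directions of the bijection.
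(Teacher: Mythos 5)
Your proposal is correct in outline, but the completeness direction takes a genuinely different route from the paper. The paper does not fold back to type $\aff{C}_n$: it re-runs the classification argument of Theorem~\ref{theo:affineCfamilles} directly on the graphs $\aff{B}_{n+1}$ and $\aff{D}_{n+2}$, observing that the key technical lemmas (Lemmas~\ref{lemma:forbiddenC}, \ref{lemma:peakC}, \ref{lemma:zigzagC}) were deliberately proved in Section~\ref{sub:proofs} for general branches of type $B$ and $D$ (Lemmas~\ref{lemma:forbidden}, \ref{lemma:peak}, \ref{lemma:zigzag}), so they apply verbatim after replacing $t$ by $t_1t_2$ (and $u$ by $u_1u_2$) in the definition of the peaks; the only extra ingredient is the statement that when $H_{\{s_1,s_2\}}$ is alternating, $H_{\{t_1,t_2,s_1\}}$ is obtained from an alternating chain by the (ALT) rule of $\Delta_t$. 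Your approach instead treats Theorem~\ref{theo:affineCfamilles} as a black box and isolates the new work into a projection lemma: the fold $\pi$ collapsing the $\{t_1,t_2\}$-content of each $s_1$-strip to a single $t$ preserves full commutativity, and the strip configurations are exactly those allowed by $\Delta_t$ family by family. This is more modular and avoids repeating the case analysis of the $\aff{C}_n$ proof, but it shifts the burden onto verifying that $\pi$ is well behaved: you must check that no order relations among non-$t$ elements are created or destroyed by the merge (so that convexity statements transfer in both directions), and your lifting case analysis for a putative convex $ts_1ts_1$ does go through. Two small corrections to your sketch: the bound ``at most one $t_1$ and one $t_2$ per strip'' comes from Proposition~\ref{prop:heaps_fullycom}(b) (two $t_1$-elements with no intervening $s_1$ form a covering relation with equal labels), not from the braid condition you cite; and when matching $\pi(H')$ against $\Delta_t$ you need the forbidden chains $t_is_1t_i$ \emph{and} $s_1t_is_1$ to pin down the rules (e.g.\ a lone $t_i$ strictly between two consecutive $s_1$-elements of a peak is excluded by $s_1t_is_1$, forcing $t_1t_2$ there, while equal indices in consecutive occupied strips are excluded by $t_is_1t_i$, forcing the alternation in (ALT)). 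With those points made explicit, your argument is a valid alternative proof, and it extends to $\aff{D}_{n+2}$ by performing the same fold independently at the $u$-end, as you say.
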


\begin{proof}

 It is a simple verification to show that the substitution $\Delta_t$ do not create any bad convex chains $t_1s_1t_1,t_2s_1t_2,s_1t_1s_1$ or $s_1t_2s_1$, and similarly for chains involving $u_1$ or $u_2$. For other types of chains, we deduce easily from Proposition~\ref{prop:sufficient} that they also satisfy Proposition~\ref{prop:heaps_fullycom}. Therefore heaps from the families of Definition~\ref{def:famillesBDtilde} are all FC.
 
 Now let $H$ be a FC heap of type $\aff{B}_{n+1}$ or $\aff{D}_{n+2}$. By the results of Section~\ref{sub:proofs}, Lemma~\ref{lemma:forbiddenC} still holds. Now in type $\aff{B}_{n+1}$, change $t$ into $t_1t_2$ in the definition of $P_{\leftarrow}(s_i)$, and in type $\aff{D}_{n+2}$, change  in addition $u$ into $u_1u_2$ in the definition of $P_{\rightarrow}(s_i)$. Then Lemmas~\ref{lemma:peakC} and~\ref{lemma:zigzagC} still hold if (ZZ) is replaced by (ZZ)$_{\aff{B}}$ or (ZZ)$_{\aff{D}}$ in the conclusion of Lemma~\ref{lemma:zigzagC}; they are indeed specializations of Lemmas~\ref{lemma:forbidden} and ~\ref{lemma:peak}.

 Now the proof of Theorem~\ref{theo:affineBDfamilles} in type $\aff{C}_n$ rests almost uniquely on these lemmas. There is only one exception, which is the fact that if  $H_{\{s_1,s_2\}}$ alternating, then $H_{\{t,s_1\}}$ is alternating. The corresponding statement in type $\aff{B}_{n+1}$ or $\aff{D}_{n+2}$, which is easily verified, is the following: under the same hypothesis that $H_{\{s_1,s_2\}}$ is alternating, then $H_{\{t_1,t_2,s_1\}}$ is obtained from an alternating $H_{\{t,s_1\}}$ by the substitution $\Delta_t$ in the case (ALT) described above. 
 
In conclusion the proof of Theorem~\ref{theo:affineCfamilles} can be mimicked almost verbatim here. By inspection, the end result is that the heap $H$ is necessarily a member of one of the families of Definition~\ref{def:famillesBDtilde}, and the proof is complete.
 \end{proof}

\subsection{Technical results}
\label{sub:proofs}

We give here the proofs of the  lemmas used in the previous subsections. We will actually prove them in a more general context, and we will use for this the following additional lemma, which might be useful to investigate FC heaps for more general Coxeter graphs.

\begin{lemma}
\label{lemma:local}
Consider a Coxeter graph $\Gamma$ possessing $3$ vertices $v_1,v_{2},v_{3}$ with $m_{v_1v_2}=m_{v_2v_3}=3$ and $m_{v_2x}=2$ for $x\neq v_{1},v_{3}$. Let $H$ be a FC heap of type $\Gamma$, and assume $H_{\{v_1,v_2\}}$ contains a factor $\mathbf{w}$ where (a) the $v_1$-elements are never consecutive and (b) there exist two $v_2$-elements which are consecutive. Consider then the factor $\mathbf{w}'$ of the word $H_{\{v_2,v_3\}}$ induced by the $v_2$-elements of $\mathbf{w}$. Then Properties (a) and (b), with $v_1,v_2$ being replaced by $v_2,v_3$ respectively, hold in $\mathbf{w}'$.
\end{lemma}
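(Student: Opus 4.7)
The approach I would take is to prove both claims by contradiction, each time producing a configuration forbidden by Proposition~\ref{prop:heaps_fullycom}: either a same-label covering or a convex chain of shape $sts$ with $m_{st}=3$. As a preliminary remark, since $\mathbf{w}$ is a factor of the totally ordered chain $H_{\{v_1,v_2\}}$, its $v_2$-elements form a consecutive subchain of $H_{v_2}$, say $v_2^{(a)}<\cdots<v_2^{(b)}$, and likewise its $v_1$-elements form a consecutive subchain of $H_{v_1}$. In particular, two $v_2$-elements from $\mathbf{w}$ are consecutive in $\mathbf{w}'$ precisely when they are of the form $v_2^{(l)},v_2^{(l+1)}$ with no $v_3$-element between them in the chain $H_{\{v_2,v_3\}}$.

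The key preliminary step I would establish is the following local observation: any $z\in H$ strictly between $v_2^{(l)}$ and $v_2^{(l+1)}$ in the heap order forces the existence of some $z_1\in H$ with $\epsilon(z_1)\in\{v_1,v_2,v_3\}$ satisfying $v_2^{(l)}<z_1<v_2^{(l+1)}$. This follows from the transitive-closure characterization of the order in Definition~\ref{defi:heaps}: a witnessing chain $v_2^{(l)}=z_0<z_1<\cdots<z_r=z$ must have $\epsilon(z_1)$ equal or adjacent to $v_2$, hence in $\{v_1,v_2,v_3\}$ because of the hypothesis $m_{v_2 x}=2$ for $x\neq v_1,v_3$; and $z_1\leq z<v_2^{(l+1)}$.

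For (a'), fix $l\in\{a,\ldots,b-1\}$ and assume for contradiction that no $v_3$-element lies between $v_2^{(l)}$ and $v_2^{(l+1)}$ in $H_{\{v_2,v_3\}}$. If additionally no $v_1$-element lies between them in $H_{\{v_1,v_2\}}$, then the observation together with consecutivity in $H_{v_2}$ shows that nothing lies strictly between them in $H$, so $v_2^{(l+1)}$ covers $v_2^{(l)}$, contradicting Proposition~\ref{prop:heaps_fullycom}(b). Otherwise, hypothesis (a) and consecutivity in $H_{v_2}$ preclude two or more $v_1$-elements between them: two such $v_1$'s would, by (a), require a $v_2$ between them in $H_{\{v_1,v_2\}}$, hence in $H_{v_2}$. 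So exactly one $v_1^{(k)}$ lies between them, and the observation shows that $v_2^{(l)}<v_1^{(k)}<v_2^{(l+1)}$ is a convex chain of shape $sts$ with $m_{v_1v_2}=3$, contradicting Proposition~\ref{prop:heaps_fullycom}(a).

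For (b'), I would take the pair $v_2^{(p)},v_2^{(p+1)}$ provided by (b), which is $v_2$-consecutive in $H_{v_2}$ and has no $v_1$-element between it in $H_{\{v_1,v_2\}}$. By (a') just established, at least one $v_3$-element lies between them in $H_{\{v_2,v_3\}}$. If there were exactly one, say $v_3^{(m)}$, the same convexity reasoning would yield a forbidden convex chain $v_2^{(p)}<v_3^{(m)}<v_2^{(p+1)}$ of shape $sts$ with $m_{v_2v_3}=3$. Hence at least two $v_3$-elements lie between $v_2^{(p)}$ and $v_2^{(p+1)}$ in $H_{\{v_2,v_3\}}$, and these are consecutive in $\mathbf{w}'$ by consecutivity of $v_2^{(p)},v_2^{(p+1)}$ in $H_{v_2}$. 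The main obstacle I anticipate is the local observation above, which is precisely where the hypothesis $m_{v_2x}=2$ for $x\neq v_1,v_3$ gets used in an essential way; once it is in place, the three forbidden configurations ($v_2\cdot v_2$ covering, $v_2 v_1 v_2$ chain, $v_2 v_3 v_2$ chain) are excluded uniformly and both (a') and (b') drop out.
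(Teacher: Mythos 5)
Your proposal is correct and follows essentially the same route as the paper, whose proof compresses the identical case analysis (zero or one $v_1$-element between consecutive $v_2$-elements of $\mathbf{w}$, forcing via Proposition~\ref{prop:heaps_fullycom} at least two, respectively at least one, $v_3$-elements) into two lines; your ``local observation'' merely spells out the covering/convexity verifications the paper leaves implicit. One small point: in the one-$v_1$ case your one-sided observation only yields $v_1^{(k)}\leq z$ for an element $z$ strictly between the two $v_2$-elements, so to conclude convexity you also need its mirror form (an element with label in $\{v_1,v_2,v_3\}$ strictly below $v_2^{(l+1)}$ on any witnessing chain), which follows by the identical argument or by passing to the dual heap.
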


\begin{proof} Between two $v_2$-elements $v_2^{(j)},v_2^{(j+1)}$ occurring in $\mathbf{w}$, there is by hypothesis zero or one $v_1$-element; by Proposition~\ref{prop:heaps_fullycom}, there must then be at least two $v_3$-elements in the first case and at least one in the second case. Therefore $\mathbf{w}'$ satisfies both (a) and (b). 
\end{proof}

\begin{figure}[h]
\includegraphics[width=0.88 \textwidth]{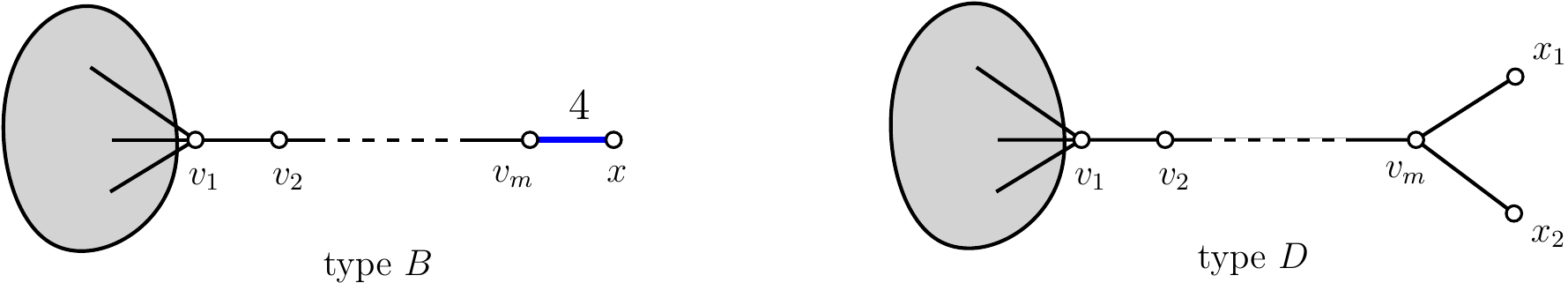}
\caption{\label{fig:branch-typeBD} Branches of types $B$ and $D$.}
\end{figure}

Now we prove the following lemma, which obviously implies Lemma~\ref{lemma:forbiddenC}.
\begin{lemma}
\label{lemma:forbidden}
Let $\Gamma$ be a Coxeter graph containing a branch of type $B$ or $D$ (see Figure \ref{fig:branch-typeBD}). Let $H$ be a FC heap of type $\Gamma$. Then for any $i\in \{2,\dots,m\}$, the word  $H_{\{v_{i-1},v_{i}\}}$ 
contains no factor equal to: \begin{center} (1) $v_{i}v_{i-1}v_{i}v_{i}$,  (1') $v_{i}v_{i}v_{i-1}v_{i}$, or (2) $v_{i}v_{i}v_{i}$. \end{center}
\end{lemma}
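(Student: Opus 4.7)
My plan is to reduce all three forbidden factors to a single propagation argument based on Lemma~\ref{lemma:local}, and then derive a contradiction at the end $v_m$ of the branch.

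I first notice that each of the three factors $\mathbf{w}$ of $H_{\{v_{i-1},v_i\}}$ in (1), (1'), (2) contains at most one occurrence of $v_{i-1}$, so (a) \emph{no two consecutive $v_{i-1}$-elements in $\mathbf{w}$} holds trivially; by inspection (b) \emph{some $v_iv_i$ pair is consecutive in $\mathbf{w}$} is also met. Thus Lemma~\ref{lemma:local} applies to $\mathbf{w}$ with the triple $(v_1,v_2,v_3)=(v_{i-1},v_i,v_{i+1})$, provided $i<m$, since in a $B$ or $D$ branch the middle vertex $v_i$ is interior to the linear part (both incident bonds equal $3$, and $v_i$ commutes with all other vertices of $\Gamma$).

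Next, I iterate. Applying Lemma~\ref{lemma:local} successively with middle vertices $v_i, v_{i+1}, \ldots, v_{m-1}$, the conclusion of each step supplies exactly the hypotheses needed for the next one: ``no consecutive $v_j v_j$'' in the output $\mathbf{w}'$ plays the role of hypothesis (a) at the next level, and ``some consecutive $v_{j+1}v_{j+1}$'' plays the role of (b). After $m-i$ applications I obtain a factor of $H_{\{v_{m-1},v_m\}}$ containing two consecutive $v_m$-elements $x<y$, with no $v_{m-1}$-element of $H$ between them; in the boundary case $i=m$ this pair $x<y$ is already part of the hypothesis and no iteration is required.

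Finally, I would derive a contradiction from the existence of such a pair. Using the concrete description of the heap order from the remark following Definition~\ref{defi:heaps}, any $z$ with $x<z<y$ would force a chain $x = x_0 \prec x_1 \prec \cdots \prec z$ whose first step lies in $H_{v_m}$ or in $H_{\{v_m, v'\}}$ for some neighbor $v'$ of $v_m$ in $\Gamma$. The label $v_m$ is ruled out by the consecutiveness of $x,y$ in $H_{v_m}$, and the branch structure of type $B$ or $D$ provides only the single neighbor $v_{m-1}$, which is also ruled out by the consecutiveness of $x,y$ in $H_{\{v_{m-1},v_m\}}$. Hence $y$ must cover $x$ in $H$, giving a covering pair with identical label, which contradicts condition~(b) of Proposition~\ref{prop:heaps_fullycom}.

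The main technical point is the propagation loop: at each step one must check that the new factor still fulfils both hypotheses of Lemma~\ref{lemma:local}, but this is precisely the content of the lemma's conclusion, so the induction is essentially automatic. The delicate part is the endpoint analysis, where one has to exploit \emph{both} consecutiveness data simultaneously (in $H_{v_m}$ and in $H_{\{v_{m-1}, v_m\}}$) together with the fact that $v_m$ has no other neighbour supplied by the branch, in order to exclude every possible intermediate label.
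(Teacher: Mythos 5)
The propagation part of your argument (iterating Lemma~\ref{lemma:local} up the branch) is sound and is essentially the paper's induction, but your terminal step rests on a misreading of what a branch of type $B$ or $D$ is. In such a branch $v_m$ is \emph{not} a vertex of degree $1$: in type $B$ it carries an extra neighbour $x$ with $m_{v_m x}=4$, and in type $D$ two extra neighbours $x_1,x_2$ with bond $3$ (see Figure~\ref{fig:branch-typeBD} and the statement of Lemma~\ref{lemma:peak}). Hence two consecutive $v_m$-elements with no $v_{m-1}$-element between them do \emph{not} form a same-label covering pair: the open interval between them can, and by full commutativity must, contain an $x$-element (resp.\ one $x_1$- and one $x_2$-element). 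Such configurations really occur in FC heaps --- they are precisely the peaks of Lemma~\ref{lemma:peak}; for instance $\H(v_m x v_m)$ is FC because $m_{v_m x}=4$, and $\H(v_m x_1 x_2 v_m)$ is FC in the type $D$ case. So the contradiction you claim at the end simply does not exist; if it did, the families (LP), (RP), (LRP) of the paper would be empty.

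There is a second, related gap: by retaining from the iteration only the weak invariant ``no consecutive $v_{j-1}$'s and some consecutive pair of $v_j$'s'', you discard exactly the information needed at the top of the branch. That invariant at level $\{v_{m-1},v_m\}$ is already satisfied by a plain peak, so no argument can derive a contradiction from it alone. The paper instead keeps the full pattern alive: a factor of type (1) or (1') reproduces itself in $H_{\{v_i,v_{i+1}\}}$ (and a factor of type (2) produces $v_{i+1}v_{i+1}v_iv_{i+1}v_{i+1}$, hence factors (1) and (1') one level up), and the base case $i=m$ then uses the \emph{extra} letters of the pattern together with the end-of-branch structure --- the bond $m_{v_m x}=4$ in type $B$, or the pair $x_1,x_2$ in type $D$ --- to build a forbidden alternating convex chain such as $x\,v_m\,x\,v_m$, contradicting Proposition~\ref{prop:heaps_fullycom}(a). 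To repair your proof you would need both to strengthen the propagated statement to the full forbidden pattern and to replace your covering-relation endgame by such an analysis at $v_m$ involving $x$ (resp.\ $x_1,x_2$).
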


\noindent A graphical illustration of the forbidden factors is given in Figure~\ref{fig:forbidden}.

\begin{proof} Suppose $i=m$ and the branch is of type $B$. It is easy to see that in $H_{\{v_{m-1},v_{m}\}}$ there are no factors of type (1), (1'), or (2). Indeed, since $H$ is FC, by Proposition~\ref{prop:heaps_fullycom} there should be a $x$-element between any two consecutive $v_{m}$-elements. This would give rise to convex chains labeled either $x v_{m} x v_{m}$ or $v_{m} x v_{m} x$, contradicting  Proposition~\ref{prop:heaps_fullycom}. Similarly, in the type $D$ case, any two consecutive $v_{m}$-elements must be separated either by an occurrence of a $x_1$- or a $x_2$-element, or both. All possibilities are again excluded by Proposition~\ref{prop:heaps_fullycom}.

Now if $i<m$ and the factor is of type (1) or (1'), an immediate application of Lemma~\ref{lemma:local} shows that such a factor also occurs in $H_{\{v_{i},v_{i+1}\}}$, and we are done by induction. If the factor is of type (2), then $H_{\{v_{i},v_{i+1}\}}$ contains necessarily a factor $v_{i+1}v_{i+1}v_{i}v_{i+1}v_{i+1}$, which is impossible since it includes factors of type (1) and (1').
\end{proof}

The end of the proof also applies easily to exclude the occurrence of three $v_1$-elements with no element from $\Gamma-\{v_2,\ldots,v_m,x\}$ between them.

The next result is a generalization of Lemma~\ref{lemma:peakC}.
\begin{lemma}\label{lemma:peak}
Let $\Gamma$ and $H$ be as in Lemma~\ref{lemma:forbidden}. Suppose that there exists $i\in \{2,\ldots, m\}$ such that $H_{\{v_{i-1},v_{i}\}}$ contains a factor $v_{i}v_{i}$. Then the interval of $H$ between these two $v_{i}$-elements is isomorphic to $\H (v_i v_{i+1}\dots v_{m}x v_{m}\dots v_{i+1}v_i)$ for a branch of type $B$, and $\H (v_i v_{i+1}\dots v_{m}x_1x_2 v_{m}\dots v_{i+1}v_i)$ for a branch of type $D$.
\end{lemma}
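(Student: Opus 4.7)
My plan is to proceed by induction on $m-i$, with the base case $i=m$ and the inductive step $i<m$. Throughout, let $a<b$ denote the two $v_i$-elements realizing the factor $v_iv_i$ in $H_{\{v_{i-1},v_i\}}$; consecutiveness in this chain forbids any $v_{i-1}$-element in the open interval $(a,b)_H$, and Proposition~\ref{prop:heaps_fullycom}(b) forbids any $v_i$-element there either. The key auxiliary observation is a \emph{label-propagation} principle: given any $d\in(a,b)_H$ and any chain $a=c_0<c_1<\cdots<c_k=d$ in $H$ witnessing $a<d$, every intermediate element $c_j$ lies in $(a,b)_H$, and since the only neighbours of $v_i$ in the branch are $v_{i-1}$ and $v_{i+1}$ (or, when $i=m$, $v_{m-1}$ and $x$ in type $B$, respectively $v_{m-1},x_1,x_2$ in type $D$), the admissible labels along such a chain are tightly constrained once the forbidden labels $v_{i-1},v_i$ have been ruled out.

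For the base case $i=m$, the label-propagation argument forces every element of $(a,b)_H$ to carry label $x$ in type $B$, respectively $x_1$ or $x_2$ in type $D$. In type $B$, combining Proposition~\ref{prop:heaps_fullycom}(a) for the edge $\{v_m,x\}$, where $m_{v_mx}=4$, with condition (b) forces exactly one $x$-element in $(a,b)_H$, giving $[a,b]_H\cong\H(v_mxv_m)$. In type $D$, a similar but slightly more delicate count---using that $x_1$ and $x_2$ are non-adjacent in $\Gamma$, so any chain in $H$ between two same-labelled elements of $(a,b)_H$ must stay at that single label---yields exactly one $x_1$- and one $x_2$-element in $(a,b)_H$, incomparable in $H$; existence is forced by Proposition~\ref{prop:heaps_fullycom}(a) applied to the edges $\{v_m,x_j\}$ with $m_{v_mx_j}=3$, and the resulting interval is $\H(v_mx_1x_2v_m)$.

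For the inductive step $i<m$, I would apply Lemma~\ref{lemma:local} with $(v_1,v_2,v_3):=(v_{i-1},v_i,v_{i+1})$ to the factor $\mathbf{w}=v_iv_i$: its hypotheses (a) and (b) hold trivially, so the induced factor $\mathbf{w}'$ of $H_{\{v_i,v_{i+1}\}}$ exhibits two consecutive $v_{i+1}$-elements. Since $(a,b)_H$ contains no $v_i$-element, $\mathbf{w}'$ has shape $v_iv_{i+1}^pv_i$ with $p\geq 2$, and Lemma~\ref{lemma:forbidden} (forbidding the factor $v_{i+1}v_{i+1}v_{i+1}$) then forces $p=2$. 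Writing $a'<b'$ for the two resulting $v_{i+1}$-elements, the inductive hypothesis gives $[a',b']_H\cong\H(v_{i+1}\cdots v_mxv_m\cdots v_{i+1})$, or its type $D$ analogue. A final round of label-propagation shows $[a,a')_H=\{a\}$ and $(b',b]_H=\{b\}$: any strict intermediate element $d$ in $(a,a')_H$ would force a chain starting at $a$ whose first step lands on a $v_{i+1}$-element strictly below $a'$, contradicting that $a'$ is the lowest $v_{i+1}$-element of $(a,b)_H$. Gluing the endpoints $a,b$ onto $[a',b']_H$ then yields the claimed peak isomorphism.

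The main obstacle is the label-propagation principle itself: one must rule out that some element carrying a label remote from the tip of the branch sneaks into $[a,b]_H$ via a long chain in the Hasse diagram. Making this airtight requires an inner induction on chain length, tracking at each step how the admissible labels of $c_j$ shrink away from $v_i$, until one arrives at the label of $d$ and derives a contradiction with the label restrictions already established on $(a,b)_H$.
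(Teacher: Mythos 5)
Your proposal is correct and follows essentially the same route as the paper: induction with base case $i=m$ settled by Proposition~\ref{prop:heaps_fullycom}, and the step $i<m$ forcing exactly two $v_{i+1}$-elements via Lemma~\ref{lemma:local} together with Lemma~\ref{lemma:forbidden}(2), then invoking the inductive hypothesis (the paper merely says ``the proof follows by induction'', so your gluing step just makes explicit what the paper leaves implicit). One tiny phrasing point: proving $[a,a')_H=\{a\}$ and $(b',b]_H=\{b\}$ alone would not exclude elements of $(a,b)_H$ incomparable to $a'$ or $b'$, but your chain argument (first step up from $a$, and last step down from $b$, must land on a $v_{i+1}$-element of the interval) in fact shows directly that $(a,b)_H\subseteq[a',b']_H$, which is exactly what is needed.
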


\begin{proof}
Suppose  $i=m$, and  the factor $v_{m}v_{m}$ is in $H_{\{v_{m-1},v_{m}\}}$. Since $H$ is FC, Proposition~\ref{prop:heaps_fullycom}(b) implies that exactly one element ({\em resp.} two elements)  labeled $x$ ({\em resp.} $x_1$ and $x_2$) must occur between the two $v_{m}$-elements in the type $B$ ({\em resp.} type $D$) case, which is what we wanted. Now let $i \in \{2,\ldots, m-1\}$. If  $H_{\{v_{i-1},v_{i}\}}$ contains the factor $v_{i}v_{i}$, then Proposition~\ref{prop:heaps_fullycom} and Lemma~\ref{lemma:forbidden} (2) imply that exactly two $v_{i+1}$-elements occur between the two $v_{i}$-elements. The proof follows by induction.
\end{proof}

The conclusion also holds with $i=1$ if one assumes that there exist two $v_1$-elements with no element from $\Gamma-\{v_2,\ldots,v_m,x\}$ between them.

\begin{figure}[h]
\includegraphics[width=0.70 \textwidth]{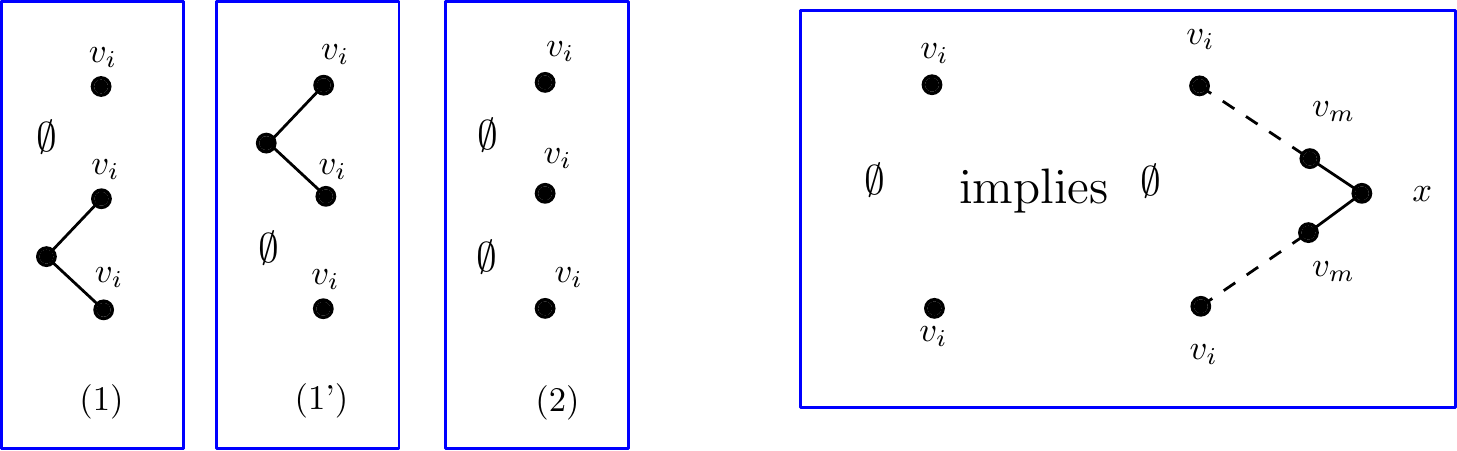}
\caption{\label{fig:forbidden} Forbidden factors of Lemma \ref{lemma:forbidden}, and an illustration of Lemma~\ref{lemma:peak} for a branch of type $B$.}
\end{figure}

Finally, the following result implies Lemma~\ref{lemma:zigzagC}, and is therefore a generalization of Ernst's~\cite[Lemma 3.3.6]{ErnstDiagramII}. Recall the definition of $P_{\rightarrow}(s_i)$ and $P_{\leftarrow}(s_i)$ in Section~\ref{sub:Caffine}, and their natural extension to types $\aff{B}$ and $\aff{D}$ in the proof of Theorem~\ref{theo:affineBDfamilles}.

\begin{lemma}[Zigzag Lemma]\label{lemma:zigzag}
Let $\Gamma$ be the Coxeter graph $\aff{C}_n$, $\aff{B}_{n+1}$ or $\aff{D}_{n+2}$. Let $H$ be a FC heap of type $\Gamma$ with an interval of the form  $P_{\rightarrow}(s_i)$ or $P_{\leftarrow}(s_i)$ for a certain $i\in \{1,\dots,n-1\}$. If $H$ contains at least another $s_i$-element, then $H$ belongs to the family (ZZ)$_{\Gamma}$.
\end{lemma}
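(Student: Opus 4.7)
The plan is to prove that, under the hypotheses, $H$ is a chain whose label word is a factor of the infinite zigzag word $(ts_1\cdots s_{n-1}us_{n-1}\cdots s_1)^\infty$ (with the substitutions $t\mapsto t_1t_2$ and $u\mapsto u_1u_2$ in types $\aff{B}$ and $\aff{D}$), which is exactly the defining property of $(\mathrm{ZZ})_\Gamma$. Using the duality of heaps and the symmetry of the diagram, I may assume the distinguished interval is $P_{\rightarrow}(s_i)$ with bottom $a$ and top $b$, and take $c>b$ to be the immediate successor of $b$ in the chain $H_{s_i}$.

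The heart of the argument is to identify $[b,c]$ with $P_{\leftarrow}(s_i)$. The peak $[a,b]$ contributes the factor $s_is_i$ (from $a,b$) in the chain $H_{\{s_{i-1},s_i\}}$, so between $b$ and $c$ this chain reads $s_i s_i (s_{i-1})^q s_i$ for some $q\geq 0$. Proposition~\ref{prop:heaps_fullycom}(b) gives $q\geq 1$; Lemma~\ref{lemma:forbidden}~(1$'$) applied to the $t$-branch rules out $q=1$; and Lemma~\ref{lemma:forbidden}~(2) applied to the $u$-branch (where $s_{i-1}$ is now the far vertex) rules out $q\geq 3$. Hence $q=2$. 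A parallel analysis in $H_{\{s_i,s_{i+1}\}}$ shows that no $s_{i+1}$-element lies between $b$ and $c$ (a single one would yield the forbidden factor $s_{i+1}s_{i+1}s_is_{i+1}$ from the $t$-branch), and transitive comparability through the chains $H_{\{s_j,s_{j+1}\}}$ then forbids all labels in $\{s_{i+1},\ldots,s_{n-1},u\}$ (or their $\aff{B},\aff{D}$ counterpart) from occurring in $[b,c]$. Starting from the factor $s_i,s_{i-1},s_{i-1},s_i$ I would then iterate Lemma~\ref{lemma:local} along the sub-path $s_{i-1}-s_{i-2}-\cdots-s_1$, combining at each step with the factor-(2) bound from the opposite branch, to get exactly two $s_{j-1}$-elements between the two $s_j$-elements for $j=i-1,\ldots,2$. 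At the $s_1$-level the incident edge has weight~$4$ so Lemma~\ref{lemma:local} no longer applies; instead, Proposition~\ref{prop:heaps_fullycom} (same-label cover rule plus the no-convex-chain-of-length-$3$ rule for the adjacent generator) forces exactly one $t$-element (respectively one $t_1t_2$ or $u_1u_2$ pair in types $\aff{B}$, $\aff{D}$) between the two $s_1$-elements. Counting yields $|[b,c]|=2i+1$, and a convexity check gives $[b,c]=P_{\leftarrow}(s_i)$.

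At this stage the chain $[a,c]$ is one full period of the zigzag and already uses every label of $\Gamma$. To finish, I would argue that any element $e\in H\setminus[a,c]$ lies strictly below $a$ or strictly above $c$: the chain $H_{\epsilon(e)}$ makes $e$ comparable to some element of $[a,c]$ sharing its label, and attempting to place $e$ strictly in the middle of $[a,c]$ would either duplicate a label inside one of the peaks already determined (forbidden by the counting of the central step) or produce a forbidden convex chain under Proposition~\ref{prop:heaps_fullycom}. Iterating the central step at each further $s_i$-element yields successive peaks of alternating orientation attached to the chain, and symmetric versions of the same arguments determine the two (possibly incomplete) tails. The resulting chain has labels spelling a factor of the infinite zigzag, whence $H\in(\mathrm{ZZ})_\Gamma$. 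The passage from $\aff{C}_n$ to $\aff{B}_{n+1}$ and $\aff{D}_{n+2}$ is automatic since Lemmas~\ref{lemma:forbidden}, \ref{lemma:peak} and~\ref{lemma:local} are stated in the general branch framework, and only the final $x$-end step changes cosmetically.

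The main obstacle is this last paragraph: a priori a heap may carry elements incomparable to the built chain. The key is that once every label of $\Gamma$ appears in $[a,c]$ with its forced multiplicity, any would-be extra occurrence is either absorbed into one of the already-determined peaks (contradicting the counting) or forced by the forbidden-factor analysis to extend the chain at one of its ends.
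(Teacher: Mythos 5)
Your first half follows the same route as the paper but re-derives by hand what could simply be quoted: once you know there is no $s_{i+1}$-element between $b$ and $c$, the pair $b,c$ gives a factor $s_is_i$ in $H_{\{s_i,s_{i+1}\}}$ and the mirror version of Lemma~\ref{lemma:peak} identifies $[b,c]$ with $P_{\leftarrow}(s_i)$ in one stroke. Within your propagation the details are shaky in minor, repairable ways: the forbidden factors $s_is_is_{i-1}s_i$ and $s_{i+1}s_{i+1}s_is_{i+1}$ come from the branch ending at $u$ while $(s_{i-1})^3$ comes from the branch ending at $t$ (your attributions are swapped), the case $i=1$ (where the neighbour below is $t$, with $m_{ts_1}=4$) is not covered by Lemma~\ref{lemma:forbidden} as stated, and the claim that Proposition~\ref{prop:heaps_fullycom}(b) alone gives $q\geq 1$ is premature, since $b\prec c$ need not be a covering relation before the $s_{i+1}$-elements have been excluded.

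The genuine gap is the final step, which you yourself flag. Knowing that $[a,c]$ is a full zigzag period does not yet show that $H$ is totally ordered with labels forming a factor of the zigzag word. Your dichotomy ``below $a$ or above $c$'' silently ignores elements incomparable to $a$ or $c$: for instance a $t$-element lying above the upper $s_1$ of the down-peak but a priori incomparable to the elements labelled $s_2,\dots,s_i$ above it; excluding such configurations requires an argument, because the would-be forbidden chains (here of length $4$ for the label pair $\{t,s_1\}$) are only forbidden when convex, and convexity is exactly what is not automatic. Likewise, ``symmetric versions of the same arguments determine the tails'' does not work as stated: your central step needs a bounding $s_i$-element on both sides, which the portions of $H$ beyond the extremal $s_i$-elements lack, so nothing you proved forces these tails to be truncated peaks rather than, say, branching configurations. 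The paper closes precisely this hole by a different device: it observes that the double peak $\mathbf{c}$ from $a$ to $c$ is a \emph{convex} chain of $H$, hence $H=\H(\mathbf{v}\mathbf{c}\mathbf{w})$ for some words $\mathbf{v},\mathbf{w}$, and then a letter-by-letter case analysis, using commutations and Proposition~\ref{prop:caracterisation_fullycom} on reduced words, forces the first letter of $\mathbf{w}$ (and the last of $\mathbf{v}$) to continue the zigzag; induction on $|\mathbf{v}|+|\mathbf{w}|$ then gives $H\in(\mathrm{ZZ})_\Gamma$. Some argument of this kind (word-based or a careful convexity analysis) must be supplied to turn your sketch into a proof; as written, the conclusion that $H$ is a chain of zigzag type is asserted rather than established.
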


\begin{proof}  
For symmetry reasons, we can assume that we have a peak $P_{\rightarrow}(s_i)$ whose $s_i$-elements are $s_i^{(j)}$ and $s_i^{(j+1)}$, and a third $s_i$-element $s_i^{(j+2)}$. There can be no $s_{i+1}$- element between $s_i^{(j+1)}$ and $s_i^{(j+2)}$, since this would create a factor $s_is_is_{i+1}s_i$ in the chain $H_{\{s_i,s_{i+1}\}}$, which is of the forbidden form (1') in Lemma~\ref{lemma:forbidden}. By Lemma~\ref{lemma:peak}, $s_i^{(j+1)}$ and $s_i^{(j+2)}$  therefore determine an interval of the form $P_{\leftarrow}(s_i)$.

Consider now the chain thus formed between $s_i^{(j)}$ and $s_i^{(j+2)}$: it is of the form $\H(\mathbf{c})$ where $\mathbf{c}$ is the word 
\[\mathbf{c}=s_i s_{i+1}\dots s_{n-1}u s_{n-1}\dots s_{i+1}s_i s_{i-1}\dots s_{1}t s_{1}\dots s_{i-1}s_i.\]
The chain is easily seen to be convex in $H$, and therefore there exist words $\mathbf{v},\mathbf{w}$  so that $H$ is of the form $\H(\mathbf{v}\mathbf{c}\mathbf{w})$. Now let $x$ be the first letter of $\mathbf{w}$ (if $\mathbf{w}$ is nonempty). The case $x=s_i$ is impossible since it would create a factor $s_i^2$; $x=s_j$ for $j\in\{1,\ldots,i-1\}$ is also impossible since, up to commutation, it would create a long braid $s_js_{j+1}s_j$, while $x=t$ (\emph{resp.} $x=t_1$, \emph{resp.} $x=t_2$) would create a long braid $s_1ts_1t$ (\emph{resp.} $t_1s_1t_1$, \emph{resp.} $t_2s_1t_2$). Similarly, $x=s_j$ for $j=i+2,\ldots,n-1$ and, for $i<n-1$, $x=u$ (\emph{resp.} $x=u_1$, \emph{resp.} $x=u_2$) are excluded since they would create similar long braid words after some commutations. The only remaining possibilities are $x=s_{i+1}$ if $i<n-1$, and $x=u$ for $i=n-1$ in type $\aff{C}_n$ or $\aff{B}_{n+1}$, and $x=u_1$ or $u_2$ in type $\aff{D}_{n+2}$.
  
  We need to study what happens for heaps of the form $H=\H(\mathbf{v}\mathbf{c'}\mathbf{w})$ where in type $\aff{C}_n$ \[\mathbf{c'}=u s_{n-1}\dots \dots s_{1}t s_{1}\dots s_{n-1}u.\]
In type $\aff{B}_{n+1}$, replace $t$ by $t_1t_2$ above in $\mathbf{c'}$. In type $\aff{D}_{n+2}$, there are several subcases for $\mathbf{c'}$, consisting in replacing both $u$'s independently by $u_1$, $u_2$ or $u_1u_2$. Let again $x$ be the first letter of $\mathbf{w}$ if $v$ is nonempty. Then by inspection there is only one possibility for $x$, namely $x=s_{n-1}$ in types $\aff{C}_n$ and $\aff{B}_{n+1}$; and $x=s_{n-1}$ (\emph{resp.} $x=u_1$, \emph{resp.} $x=u_2$) in type  $\aff{D}_{n+2}$ when $\mathbf{c'}$ ends with $u_1u_2$ (\emph{resp.} $u_1$, \emph{resp.} $u_2$). 

We have naturally similar results for the last letter of the prefix $\mathbf{v}$. By induction on the sum of the lengths of $\mathbf{v}$ and $\mathbf{w}$, we have then that $H$ is an element of (ZZ)$_{\Gamma}$.
\end{proof}

%%%%%%%%%%%%%%%%%%%%%%%%%%%%%%%%%%%%%%%%%%%%%%%%%%%%%%%%%%%%%%%%%%%%%%%%%%%%%
\section{Enumeration}
\label{sec:enum}
%%%%%%%%%%%%%%%%%%%%%%%%%%%%%%%%

We now use the previous description of FC heaps in classical affine types  $\aff{C}_n$, $\aff{B}_{n+1}$ and  $\aff{D}_{n+2}$ to obtain information about their growth sequences. 

%%%%%%%%%%%%%
\subsection{Periodicity}
%%%%%%%%%%%%%

\begin{theorem}\label{theo:periodicityBCD}
Let $n\geq 2$. The growth sequence of FC elements in type $\aff{C}_n$ (\emph{resp.} $\aff{B}_{n+1}$, \emph{resp.} $\aff{D}_{n+2}$) is ultimately periodic, with period $n+1$ (\emph{resp.} $(n+1)(2n+1)$, \emph{resp.} $(n+1)$). The periodic part starts at length $\ell_0=n(n+1)/2 +3$ (\emph{resp.} $\ell_0=(n+1)(n+2)/2 +3$, \emph{resp.} $\ell_0=(n+1)(n+2)/2 +3$), except in type $\aff{C}_2$ where it starts at length $4$.
\end{theorem}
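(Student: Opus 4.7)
The plan is to extend the argument of Theorem~\ref{theo:periodicityA} by using the classification of FC heaps given by Theorems~\ref{theo:affineCfamilles} and~\ref{theo:affineBDfamilles}. The fundamental idea is the same: for each family in the classification, one exhibits a length-increasing shift which becomes a bijection once the length is large enough, so that the growth sequence becomes periodic with the period equal to the length increment.

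For type $\aff{C}_n$, I would partition FC heaps into the five families (ALT), (ZZ), (LP), (RP), (LRP). Using Theorem~\ref{theorem:walk_encoding}, heaps in the (ALT) family correspond to walks in $\Genset_n^*$ (with the endpoint conditions from Proposition~\ref{prop:altFCheaps}), and the shift $S$ that translates the entire walk up by one unit adds exactly one element of each of the $n+1$ labels, so it increases the Coxeter length by $n+1$. This shift is a bijection at level $\ell$ as soon as $\ell$ exceeds the area of the largest walk that touches the $x$-axis, which is a triangular walk of area $n(n+1)/2$. The (LP), (RP), (LRP) families decompose as a rigid peak glued to alternating parts, so the same vertical shift applied to the alternating parts still increases length by $n+1$; the small additive constant $3$ in $\ell_0 = n(n+1)/2+3$ comes from the extremal peak heaps shown in Figure~\ref{fig:extremal_heaps}. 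Finally, the (ZZ) family consists of finite factors of a periodic word of period $2n$, so its count at length $\ell$ stabilizes for $\ell$ large enough, making its contribution eventually constant and hence compatible with any period. The special case $\aff{C}_2$ would be checked by hand.

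For types $\aff{B}_{n+1}$ and $\aff{D}_{n+2}$, each FC heap is obtained from an $\aff{C}_n$ FC heap via the substitutions $\Delta_t$ or $\Delta_{t,u}$ (Theorem~\ref{theo:affineBDfamilles}). The shift descends to a length-$(n+1)$ shift on the (ALT), (LP), (RP), (LRP) families whenever the underlying $\aff{C}_n$ heap contains at least two $t$-elements (and two $u$-elements in type $\aff{D}$), which is automatic once the length is large enough. The new feature is the (ZZ) family. In type $\aff{B}_{n+1}$, ZZ heaps are essentially factors of the periodic word $(t_1 t_2 s_1 \cdots s_{n-1} u s_{n-1} \cdots s_1)^\infty$; moving one $\aff{C}_n$ period forward on the underlying $\aff{C}_n$ ZZ word adds one inner $t$-element, whose substitution $t \mapsto t_1 t_2$ contributes an extra $1$ to the length, so the ZZ contribution becomes periodic with period $2n+1$. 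Since $\gcd(n+1, 2n+1)=1$, the LCM with the shift period $n+1$ is $(n+1)(2n+1)$. In type $\aff{D}_{n+2}$, an analogous computation would give length increments consistent with period $n+1$ once the symmetry between the two forked ends is taken into account.

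The main obstacle will be the careful combinatorial bookkeeping in types $\aff{B}$ and $\aff{D}$: tracking how the endpoint substitutions of $\Delta_t$ and $\Delta_{t,u}$ affect the length, and especially verifying that the (ZZ) family in $\aff{D}_{n+2}$ produces a growth contribution whose period divides $n+1$. A secondary difficulty is pinpointing the exact preperiod $\ell_0$: for each of the five families one must locate the largest length at which the shift fails to be bijective (chiefly due to walks touching the $x$-axis and the extremal peak configurations of Figure~\ref{fig:extremal_heaps}), and then take the maximum over all families, while handling the anomalous case $\aff{C}_2$ separately.
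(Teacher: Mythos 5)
Your global strategy is the same as the paper's: use the classification of Theorems~\ref{theo:affineCfamilles} and~\ref{theo:affineBDfamilles}, run the shift argument of Theorem~\ref{theo:periodicityA} on alternating heaps via the walk encoding of Theorem~\ref{theorem:walk_encoding}, treat zigzags as factors of a periodic word, and take the lcm of the periods; your analysis of (ALT), of (ZZ) in type $\aff{C}_n$, and of (ZZ)$_{\aff{B}}$ (period $2n+1$, hence total period $(n+1)(2n+1)$) matches the paper. However, there is a genuine error in your treatment of (LP), (RP), (LRP). These families are \emph{finite}: the peak pins $|H_{s_j}|=2$, so by condition LP(3) the remaining part is encoded by a walk anchored at height $1$ at the peak position, its heights are bounded by a staircase, and no heap in these families has length larger than $n(n+1)/2+2$. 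Consequently there is no ``vertical shift applied to the alternating parts'' inside these families --- a family of bounded length cannot carry a length-increasing bijection, and concretely, shifting the part beyond the peak up by one unit destroys condition LP(3) and, via Lemma~\ref{lemma:forbiddenC}, full commutativity. The correct statement, which the paper uses, is that only (ALT) and (ZZ) are infinite; the peak families simply stop contributing beyond length $n(n+1)/2+2$, and it is this maximal length --- attained by a peak glued to a maximal staircase, not by the small extremal configurations of Figure~\ref{fig:extremal_heaps} --- that produces $\ell_0=n(n+1)/2+3$ in type $\aff{C}_n$.

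For types $\aff{B}_{n+1}$ and $\aff{D}_{n+2}$ your determination of the preperiod misses the decisive source. There the start of periodicity is governed neither by peaks nor by walks touching the axis, but by alternating heaps whose underlying $\aff{C}_n$-heap has \emph{exactly one} $t$-element: for these, $\Delta_t$ follows a special rule (three images, one of which gains a letter), so the count of (ALT)$_{\aff{B}}$ or (ALT)$_{\aff{D}}$ at length $\ell$ agrees with the count at $\ell+(n+1)$ only once every contributing underlying heap has at least two $t$-elements. The largest exceptional heap has length $(n+1)(n+2)/2+2$, which is exactly where $\ell_0=(n+1)(n+2)/2+3$ comes from; you assert that having at least two $t$-elements is ``automatic once the length is large enough'' but never locate this threshold, and your recipe (largest failure of bijectivity, attributed to walks touching the $x$-axis and extremal peak configurations) points to the wrong place. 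Finally, the claim that (ZZ)$_{\aff{D}}$ contributes with period dividing $n+1$ (the underlying zigzag word has period $2(n+1)$; the extra elements created by the end substitutions occur at lengths divisible by $n+1$ because of the $t$/$u$ symmetry) is precisely the point you flag as unverified, so as written it remains a gap, though a smaller one than the peak-family issue.
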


\begin{proof}
Among the five families of Definition~\ref{def:famillesCtilde}, only (ALT) and (ZZ) are infinite, so we only need to look at them in order to prove ultimate periodicity; the same is then also true for (ALT)$_{\Gamma}$ and (ZZ)$_{\Gamma}$ if ${\Gamma}=\aff{B}_{n+1}$ or $\aff{D}_{n+2}$. 

In type $\aff{C}_n$, the growth sequence of (ALT) is ultimately periodic of period $n+1$: as in Theorem~\ref{theo:periodicityA}, this is most easily seen by shifting up the paths $\varphi(H)$ from Section~\ref{sub:alternating} when the length is large enough. For (ZZ), it is clear that the growth sequence is ultimately constant equal to $2n$ for $\ell$ larger than $2n+1$, which does not modify the period and yields the ultimate period $n+1$.

In types ${\Gamma}=\aff{B}_{n+1}$ or $\aff{D}_{n+2}$, (ALT)$_{\Gamma}$ is also ultimately periodic of period $n+1$: indeed, for $\ell$ large enough, any element $H$ of (ALT) have more than one $t$-element and more than one $u$-element. Therefore by the rules of Section~\ref{sub:FCaffBD}, the corresponding $\Delta_t(H)$ and $\Delta_{t,u}(H)$ in (ALT)$_{\Gamma}$ have a constant number of elements, namely $2$ and $4$ respectively. The periodicity for (ZZ)$_{\Gamma}$ is $2n+1$ in type $\aff{B}_{n+1}$ and $n+1$ in type $\aff{D}_{n+2}$, because of the special cases where the underlying $\aff{C}_n$-heap starts with a $t$ or a $u$. This does not modify the global period in type $\aff{D}_{n+2}$, while one needs to multiply both periods in type $\aff{B}_{n+1}$. Note that $n+1$ and $2n+1$ are coprime so we cannot be more precise without further knowledge.\medskip

We now indicate the precise length where periodicity starts, for each of the three types. To achieve this, we look at both infinite families (ALT) and (ZZ) and indicate the respective lengths $\ell_1+1$ and $\ell_2+1$ at which they start being periodic, as well as the largest length $\ell_3$ for which there exists a heap from one of the three finite families. If one of the three quantities $\ell_i$ in $\{\ell_1,\ell_2,\ell_3\}$ is larger than the two others, we can deduce immediately that $\ell_0:=\ell_i+1$ is the start of periodicity.

For the family (ALT) in type $\aff{C}_n$, we can reason as in Theorem~\ref{theo:periodicityA}, and this shows  that $\ell_1$ is the largest size of a heap $H$ whose corresponding path $\varphi(H)$ has a horizontal step at height zero. One has clearly $\ell_1=n(n-1)/2$, and such a heap $H$ is shown in Figure~\ref{fig:preperiodeCtilde}, left. In types ${\Gamma}$ equal to $\aff{B}_{n+1}$ and $\aff{D}_{n+2}$, the periodicity of (ALT)$_{\Gamma}$ starts when the length is such that all heaps have an underlying $\aff{C}_n$-heap in (ALT) with at least two $t$-elements, because of the special rule for the substitutions $\Delta_t$ when there is only one $t$-element. So $\ell_1$ is the largest size of a heap whose underlying $\aff{C}_n$-heap has exactly one $t$-element, such a heap $H'$ being illustrated for type $\aff{B}_{n+1}$ in Figure~\ref{fig:preperiodeCtilde}, middle. This gives $\ell_1=(n+1)(n+2)/2+2$ in these cases.

The periodic part of (ZZ)$_{\Gamma}$ is easily determined: it starts at length $\ell_2+1$ with $\ell_2$ equal to $2n+1$ in type $\aff{C}_{n}$, $2n+3$ in type $\aff{B}_{n+1}$ and $2n+4$ in type $\aff{D}_{n+2}$.

Finally, the largest heaps in the remaining finite families  have size $\ell_3$ equal to $n(n+1)/2+2$ in type $\aff{C}_n$ and $n(n+1)/2+3 $ in types $\aff{B}_{n+1}$ and $\aff{D}_{n+2}$ (see Figure~\ref{fig:preperiodeCtilde}, right, for an example $H''$ in type $\aff{C}_n$, while in type $\aff{B}_{n+1}$ (\emph{resp.} $\aff{D}_{n+2}$) pick the unique (\emph{resp.} an)  element in $\Delta_t(H'')$ ( \emph{resp.} $\Delta_{t,u}(H'')$)).

To conclude,  one has clearly $\ell_3>\ell_1,\ell_2$ in type $\aff{C}_{n}$ for $n\geq 4$, which proves the theorem for this case (small cases $n=2$ and $3$ are checked separately). In types $\aff{B}_{n+1}$ and $\aff{D}_{n+2}$, one has now $\ell_1>\ell_2,\ell_3$ for $n\geq 2$, except for $\aff{D}_{4}$ which is easily checked separately, and this achieves the proof.

\end{proof}

\begin{figure}[!ht]
\begin{center}
\includegraphics[width=0.8\textwidth]{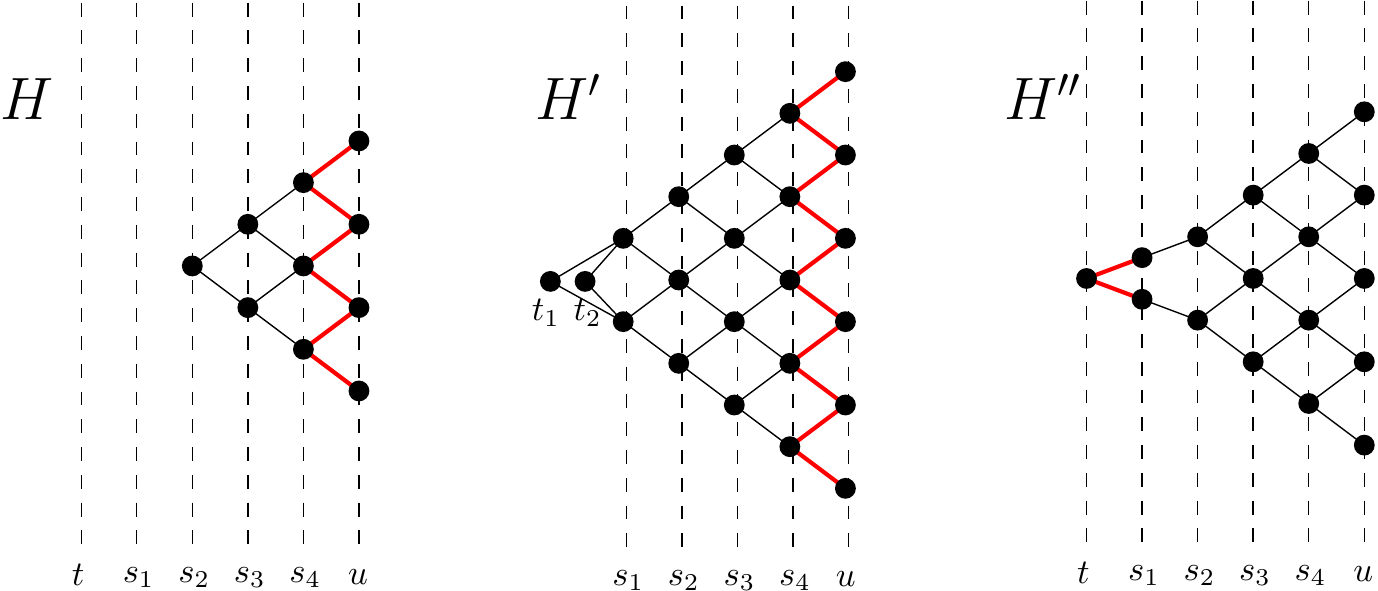}
\caption{\label{fig:preperiodeCtilde}}
\end{center}
\end{figure}

%%%%%%%%%%%%%%%%%%%%%%%%%%%%%%%%%%%%%%%%%%%%%%%%%%%
\subsection{Generating functions}
\label{sub:affines}
%%%%%%%%%%%%%%%%%%%%%%%%%%%%%%%%%%%%%%%%%%%%%%%%%%%%%%
In this section we give functional equations which allow to compute the generating functions $W^{FC}(q)$ for types $\aff{C}_n$, $\aff{B}_{n+1}$ and  $\aff{D}_{n+2}$. These equations can be easily translated into systems of recurrence relations for the generating functions. 

Let us stress that our goal was not to derive the simplest possible expressions here, though this is certainly a worthwile investigation. What we show is that our descriptions from Section~\ref{sec:bcd} apply easily here, and in particular give us equations that are straightforwardly  programmed on a computer.

\begin{proposition}\label{prop:gf_affC}
We have the generating function:
\begin{equation}
\label{eq:Ctilde}
 \aff{C}^{FC}_n(q)= \frac{q^{n+1}\touch{\Gen}_n(q)}{1-q^{n+1}}+\touch{\Gen}^*_n(q)+\frac{2nq^{2n+2}}{1-q}+(2n-2)q^{2n+1}+2LP_n(q)+LRP_n(q),
\end{equation}
 in which the four non explicit  polynomials on the right-hand side can be evaluated as coefficients of $x^n$ in the following formulas:
\begin{align}\label{eq:touchgn}
\touch{\Gen}(x)&=\Motzbic(x)\left(1+qx\Pos(qx)\right)^2;\\
\label{eq:touchgn_s}\touch{\Gen}^*(x)&=\Motz(x)\left(1+qx\Pos(qx)\right)^2;\\
\label{eq:LP} LP(x)&=\frac{xq^2}{1-xq^2}\left(xq\Motzbic(qx)\WPos(x) + q(\Pos(qx)-1)\right);\\
\label{eq:LRP} LRP(x)&=\frac{x^2q^4}{(1-xq^2)^2}\left(x^2q^2\Motzbic^2(qx)\Motz(x) + q(\Motzbic(qx)-1)\right),
\end{align}
while  $\Pos(x)$ and $\WPos(x)$ can be explicitly computed by using 
\begin{equation}\label{systemB}
\Pos(x)=\Motzbic(x)(1+xq\Pos(qx))\quad\text{and}\quad\WPos(x)=\Motz(x)(1+xq\Pos(qx)).
\end{equation}
We finally recall that $\Motzbic(x)$ and $\Motz(x)$ are evaluated through~\eqref{relationA1} and~\eqref{eqfonctMetoile}.
\end{proposition}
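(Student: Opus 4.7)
By Theorem~\ref{theo:affineCfamilles}, $\aff{C}_n^{FC}$ partitions into the five disjoint families (ALT), (ZZ), (LP), (RP), (LRP), so I compute each contribution separately and match them with the five groups of terms on the right-hand side of~\eqref{eq:Ctilde}. The horizontal $t\leftrightarrow u$, $s_i\leftrightarrow s_{n-i}$ symmetry of the $\aff{C}_n$ diagram gives a size-preserving bijection between (LP) and (RP), so their generating functions coincide and combine into $2LP_n(q)$.

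For (ALT), the Coxeter graph of $\aff{C}_n$ is the linear graph $\Gamma_n$ with $n+1$ vertices, so by Theorem~\ref{theorem:walk_encoding} alternating heaps are in size-preserving bijection with walks in $\Genset_n^*$. Decomposing such a walk by its minimum height $h$: the case $h=0$ contributes $\touch{\Gen}^*_n(q)$, and the case $h\geq 1$ contributes $\frac{q^{n+1}\touch{\Gen}_n(q)}{1-q^{n+1}}$, since each unit of shift multiplies the weight by $q^{n+1}$ (the walk having $n+1$ points) while the condition $(*)$ becomes vacuous once the walk has been lifted off the $x$-axis. For (ZZ), each heap is a chain corresponding to a finite factor of the periodic word $\bigl(ts_1\cdots s_{n-1}us_{n-1}\cdots s_1\bigr)^\infty$ of period $2n$, subject to $|H_{s_i}|\geq 3$ for some $i\in\{1,\ldots,n-1\}$. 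Since each $s_i$ appears twice per period while $t$ and $u$ appear only once, inspection shows that no factor of length $\leq 2n$ qualifies, that at length $2n+1$ exactly $2n-2$ of the $2n$ distinct starting positions give zigzags (all except those starting with $t$ or $u$), and that for $\ell\geq 2n+2$ all $2n$ starting positions do. Summing yields $(2n-2)q^{2n+1}+\frac{2nq^{2n+2}}{1-q}$.

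For (LP), an (LP)-heap with peak at $s_j$ decomposes as the peak $P_\leftarrow(s_j)$ of size $2j+1$ (contributing $q^{2j+1}$) together with a right part on the induced subdiagram $\{s_j,\ldots,u\}$. After factoring out the two $s_j$-elements already counted in the peak, the right part reduces to an alternating heap on $\{s_{j+1},\ldots,u\}$ endowed with a sandwich datum recording which of the (at most two) neighbouring $s_{j+1}$-elements sit below and which sit above the pair of $s_j$'s; this datum translates into a walk in the family of Section~\ref{sub:alternating} with an appropriately constrained initial segment. Summing over $j\geq 1$ with $x$ tracking the Coxeter index factors out $\frac{xq^2}{1-xq^2}$ and produces the bracketed two-term expression of~\eqref{eq:LP}: the first term $xq\Motzbic(qx)\WPos(x)$ arises from the generic case $j<n-1$ by decomposing the walk encoding of the right part around its first hit of the $x$-axis (the substitution $qx$ absorbing the height shift due to the sandwiched $s_j$'s), while the second term $q(\Pos(qx)-1)$ accounts for the extremal case $j=n-1$ depicted in Figure~\ref{fig:extremal_heaps}. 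The analogous decomposition of an (LRP)-heap, now with a left peak at $s_j$, a right peak at $s_k$ with $j<k$, and an alternating middle portion on $\{s_j,\ldots,s_k\}$ encoded through $\Motz$ and $\Motzbic$, yields~\eqref{eq:LRP}. Finally, the auxiliary functional equations~\eqref{eq:touchgn}--\eqref{eq:touchgn_s} follow by decomposing a touching walk as a central $\Motzbic$- (resp.~$\Motz$-) walk delimited by its first and last visits to the $x$-axis, flanked by two independent strictly positive legs each encoded by $(1+qx\Pos(qx))$; and~\eqref{systemB} records that a $\Pos$- (resp.~$\WPos$-) walk either coincides with its initial $\Motzbic$ (resp.~$\Motz$) portion or extends beyond it by an up-step into a further shifted $\Pos$-walk.

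The principal technical hurdle is matching the extremal boundary configurations catalogued in Remark~\ref{rem:familyPeaks} and Figure~\ref{fig:extremal_heaps} — where the peak parameter reaches the boundary of the diagram and the walk encoding of the right part degenerates — with the secondary term $q(\Pos(qx)-1)$ of~\eqref{eq:LP} and its counterpart in~\eqref{eq:LRP}, while at the same time ensuring that no heap is inadvertently double-counted between the (LP), (LRP) families and either (ALT) or (ZZ).
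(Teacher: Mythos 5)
Most of your proposal coincides with the paper's argument and is fine: the partition into the five families of Theorem~\ref{theo:affineCfamilles}, the left--right symmetry identifying (LP) and (RP), the decomposition of (ALT)-walks by their minimum height (each unit of upward shift costing $q^{n+1}$), the count of zigzags giving $(2n-2)q^{2n+1}+2nq^{2n+2}/(1-q)$, the first/last-touch decomposition behind \eqref{eq:touchgn}--\eqref{eq:touchgn_s}, and the first-return decomposition behind \eqref{systemB} all match the paper.

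The genuine gap is in your derivation of \eqref{eq:LP} (and, by the same token, \eqref{eq:LRP}). The paper's splitting keeps \emph{one} of the two $s_j$-elements with the right part: an (LP)-heap is the peak $P_{\leftarrow}(s_j)$ glued onto a heap which, after deleting one $s_j$-element, is alternating on $\{s_j,\ldots,s_{n-1},u\}$, hence via $\varphi$ a nonempty walk of length $n-j$ in $\Genset^*$ starting at height $1$; the weight factors as $q^{2j}$ times the size of that heap, giving the prefactor $\sum_{j\geq 1}x^jq^{2j}=xq^2/(1-xq^2)$. The bracket is then obtained by splitting these height-$1$ walks according to whether they \emph{touch the $x$-axis} (decompose at the first hit: $xq\Motzbic(qx)$ for the strictly positive prefix plus its down-step, and $\WPos(x)$ for the remainder, using reversal to see that walks issued from height $0$ under $(*)$ are counted by $\WPos$) or \emph{stay at height $\geq 1$} (shift down one unit, giving $q(\Pos(qx)-1)$). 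This touch/no-touch dichotomy applies uniformly for every $j$; your attribution of the term $q(\Pos(qx)-1)$ to the extremal case $j=n-1$ of Figure~\ref{fig:extremal_heaps} cannot be correct, since that series has nonzero coefficients in every power of $x$ and sits inside the full $j$-sum, so under your reading it would count spurious heaps for every $j<n-1$ while missing, for each $j$, the legitimate heaps in which every label to the right of $s_j$ occurs. In addition, your ``sandwich datum'' reduction (removing both $s_j$-elements and assigning the peak weight $q^{2j+1}$) is never actually translated into walks, and its bookkeeping is inconsistent with the prefactor $xq^2/(1-xq^2)$, which carries only $q^{2j}$. The same objection applies to your one-sentence treatment of \eqref{eq:LRP}: there one must consider nonempty walks starting \emph{and} ending at height $1$ and split them in the same touch/no-touch fashion, which is exactly what produces $x^2q^2\Motzbic^2(qx)\Motz(x)+q(\Motzbic(qx)-1)$; the extremal configurations of Remark~\ref{rem:familyPeaks} require no separate correction term in either formula.
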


\begin{proof}
Applying Theorem~\ref{theo:affineCfamilles}, it is enough to show that the sum of the generating functions for the five families in Definition~\ref{def:famillesCtilde} can be written as in \eqref{eq:Ctilde}. 

First, elements in (ALT) correspond to walks in $\Genset_n^*$ by the bijection $\varphi$ from  Theorem~\ref{theorem:walk_encoding}. Therefore their generating function is given by the first two terms in \eqref{eq:Ctilde} thanks to the decomposition

\begin{figure}[h]
\includegraphics[width=0.6 \textwidth]{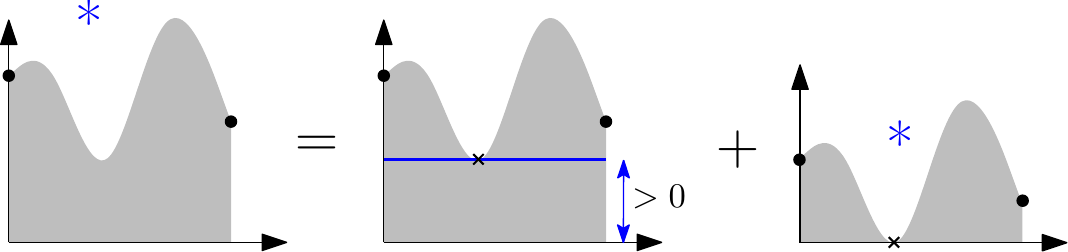}
\end{figure}

\noindent while the following one for walks in $\touch{\Genset}$ proves Equations~\eqref{eq:touchgn} and~\eqref{eq:touchgn_s}.
\begin{center}
\includegraphics[width=0.4 \textwidth]{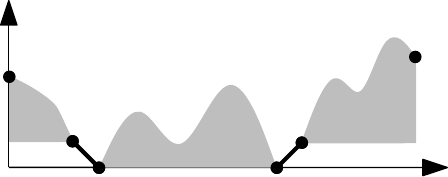}
\end{center}

Next, the zigzag heaps in (ZZ) are totally ordered, their minimal length is $2n+1$ and there are $2n-2$ of these, while for any length $>2n+1$  there are $2n$ of them. Therefore this gives the third and fourth terms in~\eqref{eq:Ctilde}.

By left-right symmetry, the generating functions for elements in (LP) and (RP) are the same. Now by definition of (LP) elements, these can be uniquely split in a peak $P_{\leftarrow}(s_j)$ and a certain heap which is alternating when one of its two $s_j$-elements is deleted. Therefore, by the bijection $\varphi$, their generating function $LP_n(q)$ is the coefficient of $x^n$ in the series
\[
\frac{xq^2}{1-xq^2}\Gen^{(1)}(x),
\]
where $\Gen^{(1)}(x)$ is the generating function for nonempty walks in $\Genset^*$ starting at height $1$. Such paths can then either touch the $x$-axis or always stay at height $\geq 1$, and the expression~\eqref{eq:LP} comes from writing the generating functions for each case.  

A similar reasoning gives us the generating function for elements in (LRP), for which we 
need to consider nonempty walks in $\Genset^*$ starting and ending at height $1$.\\
Finally, equations \eqref{systemB} are easily computed by decomposing paths in $\Posset_n$ and $\Posset_n^*$ according to their first return on the $x$-axis.
\end{proof}

In the same way, we prove the following expression for $\aff{B}^{FC}_{n+1}(q)$.

\begin{proposition}\label{prop:afftypeB}
We have the generating function:
\begin{equation}
\label{eq:Btilde}
\aff{B}^{FC}_{n+1}(q)= \frac{2q^{n+1}\touch{\Gen}_n(q)}{1-q^{n+1}}+T_n(q)+ZZ_{\aff{B}_{n+1}}(q)+q(LP_n(q)+LRP_n(q))+RP^{\Delta_t}_n(q),
\end{equation}
in which we have
\begin{equation}\label{eq:ZZ_B}
 ZZ_{\aff{B}_{n+1}}(q)=\frac{(2n+3)q^{2n+4}}{1-q}+\frac{q^{2(2n+1)}}{1-q^{2n+1}}+(2n+2)q^{2n+3}+(2n-2)q^{2n+2},
\end{equation}
while the polynomials $T_n(q)$ and $ RP^{\Delta_t}_n(q)$ are the respective coefficients of $x^n$ in the following series:
\begin{align}
\label{eq:ALT_B}
T(x)&=\WPos(x)+2(\touch{\Gen}^*(x)-\WPos(x))+xq^2\Motzbic(qx)\WPos(x)+q^2\Pos(qx);\\
 RP^{\Delta_t}(x)&=\frac{xq^2}{1-xq^2}\left[xq\Motzbic(qx)\Motz(x) + 2qx\Motzbic(qx)(\Pos^*(x)-\Motz(x))\right.\nonumber\\
&\left.+2q(\Pos(qx)-1)+q^2(\Motzbic(qx)-1)+q^3x^2\Motzbic^2(qx)\Motz(x)\right]\nonumber\\
\label{eq:RP_B} &+\frac{q^3+4q^2+2q}{1-xq^2}.
\end{align}
\end{proposition}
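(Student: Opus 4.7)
The plan is to mirror the proof of Proposition~\ref{prop:gf_affC} by invoking Theorem~\ref{theo:affineBDfamilles} to split $\aff{B}_{n+1}^{FC}$ into the five families $\text{(ALT)}_{\aff{B}}$, $\text{(ZZ)}_{\aff{B}}$, $\text{(LP)}_{\aff{B}}$, $\text{(RP)}_{\aff{B}}$, $\text{(LRP)}_{\aff{B}}$, each obtained from its $\aff{C}_n$ counterpart via the substitution $\Delta_t$ from Section~\ref{sub:FCaffBD}. For each family I would compute the $q$-weighted size of $\bigcup_H \Delta_t(H)$, using that among the three possible replacements of a single $t$-element, only $t\mapsto t_1t_2$ alters the weight (by multiplication by $q$), while $t\mapsto t_1$ and $t\mapsto t_2$ preserve it.

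The two easiest families are $\text{(LP)}_{\aff{B}}$ and $\text{(LRP)}_{\aff{B}}$. Each source heap in $\mathrm{(LP)}$ or $\mathrm{(LRP)}$ contains a unique $t$-element, coming from a peak $P_\leftarrow(s_j)$, on which $\Delta_t$ is forced to apply $t\mapsto t_1t_2$. This yields the contribution $q(LP_n(q)+LRP_n(q))$ directly using Proposition~\ref{prop:gf_affC}. For $\text{(ZZ)}_{\aff{B}}$ the source heap is a chain so the reduced word is unique: each interior $t$ contributes a weight factor $q$ (forced to $t_1t_2$) while each boundary $t$ contributes a factor $(q+2)$ (three possible substitutions summing to $q+1+1$). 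Splitting the enumeration of zigzag words by length and by whether their two endpoints are labelled $t$, and summing over the $2n-2$ minimal-length zigzags together with the $2n$ zigzags at each greater length, collects the four summands of~\eqref{eq:ZZ_B}, the geometric factor $q^{2(2n+1)}/(1-q^{2n+1})$ reflecting the period $2n+1$ of the underlying word after all interior $t$-doublings.

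The heart of the proof lies in $\text{(ALT)}_{\aff{B}}$ and $\text{(RP)}_{\aff{B}}$, where the weight of $\Delta_t(H)$ depends nontrivially on the number of $t$-elements in the source heap $H$. From the substitution rules one finds a global weight factor of $1$ when $|H_t|=0$, $(q+2)$ when $|H_t|=1$, $2$ when $|H_t|\geq 2$ (generic), and $(q+2)^2$ on the exceptional heap $\H(ts_1\cdots s_{n-1}us_{n-1}\cdots s_1t)$ of Figure~\ref{fig:extremal_heaps}(a). For $\text{(ALT)}_{\aff{B}}$, I would write the generic factor as ``$2$ plus correction''; applying the $2$ uniformly to all of $\mathrm{(ALT)}$ reproduces the first summand $\tfrac{2q^{n+1}\touch{\Gen}_n(q)}{1-q^{n+1}}$ (from walks not touching the $x$-axis) via the decomposition by lowest height already used in Proposition~\ref{prop:gf_affC}, together with $2\touch{\Gen}_n^*(q)$ that will be reabsorbed into $T_n(q)$. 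The remaining corrections for $|H_t|\in\{0,1\}$ and for the exceptional heap translate through the bijection $\varphi$ of Theorem~\ref{theorem:walk_encoding} into walks in $\Gen_n^*$ with prescribed starting height, and standard walk decompositions (first-return at the left endpoint, substitution $x\mapsto qx$ as in~\eqref{systemB}) express them as the four summands of~\eqref{eq:ALT_B}, with $q^2\Pos(qx)$ isolating the exceptional-heap contribution. For $\text{(RP)}_{\aff{B}}$ the same $|H_t|$-analysis applies, combined with the peak decomposition used for $LP_n(q)$ in Proposition~\ref{prop:gf_affC}: a source heap splits as the peak $P_\rightarrow(s_k)$ (generating function $xq^2/(1-xq^2)$) together with a left alternating tail with one $s_k$-element deleted, whose generating function I would decompose along the four $|H_t|$-regimes and according to whether it returns to the $x$-axis, obtaining the bracketed expression in~\eqref{eq:RP_B}; the additive term $(q^3+4q^2+2q)/(1-xq^2)$ then gathers the finitely many degenerate configurations where the tail is empty or extremal.

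The main obstacle is the combinatorial bookkeeping of the four $\Delta_t$-regimes in conjunction with the walk decompositions, and especially isolating the exceptional heap so that it is counted once with weight $(q+2)^2$ rather than with the generic weight $2$. Once the case analysis and decompositions are in place, summing the five contributions yields~\eqref{eq:Btilde}.
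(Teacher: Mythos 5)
Your overall strategy is exactly the paper's: split $\aff{B}^{FC}_{n+1}$ into the five families via $\Delta_t$, weight each source $\aff{C}_n$-heap by $1$, $q+2$ or $2$ according to whether it has $0$, $1$ or at least two $t$-elements (with $(q+2)^2$ for the special heap), and convert the (ALT)- and (RP)-type counts into walk decompositions as in Proposition~\ref{prop:gf_affC}; your treatments of (ZZ)$_{\aff{B}}$, (LP)$_{\aff{B}}$ and (LRP)$_{\aff{B}}$ are fine. However, your accounting of the exceptional heap $\H(ts_1\cdots s_{n-1}us_{n-1}\cdots s_1t)$ is wrong, and this is precisely the delicate point you single out as the main obstacle. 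That heap is not alternating (its chain on $\{t,s_1\}$ reads $t\,s_1\,s_1\,t$), so it does not enter the (ALT)$_{\aff{B}}$ analysis at all; it is the extremal (RP) heap of Figure~\ref{fig:extremal_heaps}(a), and its special $\Delta_t$ rule contributes the correction $\bigl((q+2)^2-2\bigr)q^{2n+1}$, which is exactly the last term $(q^3+4q^2+2q)/(1-xq^2)$ of \eqref{eq:RP_B} --- not a catch-all for ``degenerate or extremal tails'', since those configurations are already covered by the bracket (whose five terms are: touching tail with no $t$, factor $1$; touching tail with at least one $t$, factor $2$; non-touching tail, factor $2$; and the two $+q$ corrections $q^2(\Motzbic(qx)-1)$ and $q^3x^2\Motzbic^2(qx)\Motz(x)$ for exactly one $t$). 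Consequently the term $q^2\Pos(qx)$ in \eqref{eq:ALT_B} has nothing to do with the exceptional heap: for each $n$ it is a full power series in $q$, whereas a single heap of length $2n+1$ contributes only finitely many monomials. It is the $+q$ correction for alternating heaps with exactly one $t$-element whose walk never touches the $x$-axis (such a walk, shifted down by one, is an arbitrary walk ending on the axis, giving $q\Pos(qx)$, hence $q\cdot q\Pos(qx)$ for the correction); the touching walks with one $t$ give $xq^2\Motzbic(qx)\WPos(x)$, and those with no $t$ give the $-\WPos(x)$ adjustment hidden in $\WPos+2(\touch{\Gen}^*-\WPos)$.

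As written, your plan would count the special heap once inside (ALT)$_{\aff{B}}$, where it does not live, and again inside (RP)$_{\aff{B}}$ under a generic weight, and it would not reproduce the four summands of \eqref{eq:ALT_B} nor the final term of \eqref{eq:RP_B}. The repair is local: carry out the (ALT) corrections purely in terms of the starting height ($0$, $1$, $\geq 2$) of the walk, with no exceptional case, and keep the special heap entirely inside the (RP) computation, where the bracket handles all heaps with the generic weights and the single additive term accounts for replacing its factor $2$ by $(q+2)^2$. With that bookkeeping fixed, the rest of your argument goes through and coincides with the paper's proof.
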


\begin{proof}
Elements in (ALT)$_{\aff{B}}$ are by definition of the form $\Delta_t(H)$ where $H$ is a heap of type $\aff{C}_{n}$ in (ALT). $\Delta_t$ acts differently on heaps with $0$, $1$ or at least two $t$-elements, and distinguishing these cases yields the two first terms of~\eqref{eq:Btilde} (and the expression~\eqref{eq:ALT_B}) as the generating function  for elements in (ALT)$_{\aff{B}}$: recall indeed that $\Gen_n^*(q)$ can be written as $\touch{\Gen}_n^*(q)+q^{n+1}\touch{\Gen}_n(q)/(1-q^{n+1})$. Moreover, it can be computed thanks to Equations~\eqref{eq:touchgn} and \eqref{eq:touchgn_s}. 

Heaps in (ZZ)$_{\aff{B}_{n+1}}$ have minimal length  $2n+2$. Moreover, there are $2n-2$ (\emph{resp.} $2n+2$) such elements of length $2n+2$ (\emph{resp.}  $2n+3$). Then for any length $\ell>2n+3$  there are $2n+3$ elements in (ZZ)$_{\aff{B}_{n+1}}$ unless $\ell$ is divisible by $2n+1$, in which case there are $2n+4$ of them. Therefore the generating function for elements in (ZZ)$_{\aff{B}}$ is given by~\eqref{eq:ZZ_B}.

Elements in (LP)$_{\aff{B}}$ or (LRP)$_{\aff{B}}$ are obtained from elements in (LP) or (LRP) by replacing $t$ by $t_1t_2$, so the corresponding generating functions are simply multiplied by $q$. Elements in (RP)$_{\aff{B}}$ are also deduced from those of type $\aff{C}_{n}$ in (RP), but one has to distinguish whether there are $0$, exactly $1$ or more than $1$ occurrences of $t$. It is then routine to write the Equation~\eqref{eq:RP_B}, whose last term is a correction accounting for the special heap occurring in the definition of $\Delta_t$.
\end{proof} 

Finally, our method also yields the following expression for $\aff{D}^{FC}_{n+2}(q)$.

\begin{proposition}\label{prop:afftypeD}
We have the generating function:
\begin{equation}
\label{eq:Dtilde}
\aff{D}^{FC}_{n+2}(q)=  \frac{4q^{n+1}\touch{\Gen}_n(q)}{1-q^{n+1}}+U_n(q)+ZZ_{\aff{D}_{n+2}}(q)+q^2LRP_n(q)+2qRP^{\Delta_t}_n(q),
\end{equation}
in which we have
\begin{equation}\label{eq:ZZ_D}
 ZZ_{\aff{D}_{n+2}}(q)=\frac{(2n+6)q^{2n+5}}{1-q}+\frac{2q^{3(n+1)}}{1-q^{n+1}}+(2n+4)q^{2n+4}+(2n-2)q^{2n+3},
\end{equation}
while the polynomial $U_n(q)$ is the coefficient of $x^n$ in the following series:
\begin{align}
U(x)&=\Motz(x)+4x^2q^2\Motz(x)\Pos(qx)^2\nonumber \\
&+q^2[q\Motzbic(qx)+\Motz(x)(xq\Motzbic(qx))^2]+4xq\Motz(x)\Pos(qx)\nonumber \\
\label{eq:ALT_D} &+2xq^2\Motz(x)\Motzbic(qx)+4q^2\Pos(qx)+4x^2q^3\Motz(x)\Motzbic(qx)\Pos(qx).
%ALT_{\aff{D}}(x)&=4(\Gen^*(x)-2\Pos^*(x))+5\Motz(x)+2xq^2\Motz(x)\Motzbic(qx)\nonumber\\
%\label{eq:ALT_D} &+4x^2q^3\Motz(x)\Motzbic(qx)\Pos(qx)+q^2\Motz(x)(xq\Motzbic(qx))^2;\\
%LP^{\Delta_{t,u}}(x)&=\frac{x^2q^3}{1-xq^2}\left[xq\Motzbic(qx)\Motz(x) + 2qx\Motzbic(qx)(\Pos^*(x)-\Motz(x))\right.\nonumber\\
%&\left.+2q(\Pos(qx)-1)+q^2(\Motzbic(qx)-1)+q^3x^2\Motzbic^2(qx)\Motz(x)\right]\nonumber\\
%\label{eq:RP_D} &+\frac{q^4+4q^3+2q^2}{1-xq^2}.
\end{align}

\end{proposition}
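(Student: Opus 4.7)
The plan is to mimic the proof of Proposition~\ref{prop:afftypeB} by applying Theorem~\ref{theo:affineBDfamilles}: the set of FC heaps of type $\aff{D}_{n+2}$ partitions into the five families (ALT)$_{\aff{D}}$, (ZZ)$_{\aff{D}}$, (LP)$_{\aff{D}}$, (RP)$_{\aff{D}}$, (LRP)$_{\aff{D}}$ of Definition~\ref{def:famillesBDtilde}, and one computes the generating function of each. The novelty with respect to type $\aff{B}$ is that the substitution $\Delta_{t,u}$ acts independently on both the $t$- and the $u$-elements, so the combinatorial analysis has to be carried out on both ends of the heap simultaneously.

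The three peak families are the easiest. In an (LRP) heap the unique $t$-element comes from the left peak $P_\leftarrow(s_j)$ and the unique $u$-element from the right peak $P_\rightarrow(s_k)$; hence $\Delta_{t,u}$ produces a single heap in which $t$ becomes $t_1t_2$ and $u$ becomes $u_1u_2$, so the length generating function is multiplied by $q^2$, yielding the $q^2 LRP_n(q)$ term. In an (LP) heap the $t$ is likewise forced to $t_1t_2$, while the $u$-elements are free to be substituted under the rules of $\Delta_u$; the left-right mirror symmetry of the diagram $\aff{C}_n$ (exchanging $t\leftrightarrow u$ and $s_i\leftrightarrow s_{n-i}$) sends this set bijectively onto (RP) with $\Delta_t$ applied, so its length generating function equals $q\cdot RP^{\Delta_t}_n(q)$. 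The same computation for (RP)$_{\aff{D}}$ produces another copy, giving the $2q\cdot RP^{\Delta_t}_n(q)$ contribution.

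For zigzags, a (ZZ) heap is a totally ordered chain indexed by a finite factor of the infinite word $(ts_1\cdots s_{n-1}us_{n-1}\cdots s_1)^\infty$ subject to the size condition of Definition~\ref{def:famillesCtilde}. Applying $\Delta_{t,u}$ one replaces every interior $t$ by $t_1t_2$ and every interior $u$ by $u_1u_2$, while the special rule for $\Delta_t$ (resp. $\Delta_u$) gives three choices for a $t$ (resp. $u$) occurring at the very beginning or end of the word. A direct enumeration by length then gives the minimal length $2n+3$, the transitional values at $2n+4$ and $2n+5$, and the ultimate boundary count $2n+6$ for generic length, with an additional correction of $2$ every $n+1$ steps coming from the extremal case; collecting these contributions yields exactly~\eqref{eq:ZZ_D}.

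The main difficulty is the family (ALT)$_{\aff{D}}$: the generating function is $\sum_{H\in\mathrm{(ALT)}}|\Delta_{t,u}(H)|\,q^{|H|+\delta(H)}$ where $\delta(H)$ tracks the additional letters created by the substitutions. Setting $a=|H_t|$ and $b=|H_u|$, one has $|\Delta_{t,u}(H)|=n_a\,n_b$ with $n_0=1$, $n_1=3$, $n_{\geq 2}=2$, except for the extremal heap $\H(ts_1\cdots s_{n-1}us_{n-1}\cdots s_1t)$ and its mirror, where the independent-replacement rule applies. I would split the sum according to the nine cases $(a,b)\in\{0,1,\geq 2\}^2$; in each case, the bijection $\varphi$ of Theorem~\ref{theorem:walk_encoding} transforms the local conditions on $H$ at $s_0=t$ and $s_n=u$ into constraints on the first and last steps of the encoding walk, and the middle of the walk is enumerated via $\Motz$, $\Motzbic$ or $\Pos$ using the decompositions already drawn in Proposition~\ref{prop:gf_affC}. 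The case $a,b\geq 2$ contributes the dominant $\frac{4q^{n+1}\touch{\Gen}_n(q)}{1-q^{n+1}}$ term (factor $2\cdot 2=4$ from $n_a n_b$, with the shift up of walks as in the proofs of Theorems~\ref{theo:periodicityA} and~\ref{theo:periodicityBCD}), while the remaining eight combinations, corrected for the two extremal heaps, aggregate into $U_n(q)$ as displayed in~\eqref{eq:ALT_D}. The bookkeeping of these contributions is the only delicate point; once each term has been matched to its combinatorial origin via $\varphi$, the identity~\eqref{eq:Dtilde} follows by summing the five family generating functions.
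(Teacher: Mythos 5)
Your overall route is the paper's: split $\aff{D}^{FC}_{n+2}$ along the five families of Definition~\ref{def:famillesBDtilde}, reuse the type $\aff{B}$ computations for the peak families, count the zigzags directly, and treat (ALT)$_{\aff{D}}$ by a case analysis on the numbers of $t$- and $u$-elements via the walk encoding. Your handling of the peaks (in particular the mirror-symmetry argument giving $q\,RP^{\Delta_t}_n(q)$ for (LP)$_{\aff{D}}$, which the paper leaves implicit) and your reading of the zigzag counts behind \eqref{eq:ZZ_D} are correct.

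However, the (ALT)$_{\aff{D}}$ part --- the only genuinely hard part of the proposition --- is organized around two incorrect claims. First, the heap $\H(ts_1\cdots s_{n-1}us_{n-1}\cdots s_1t)$ and its mirror are \emph{not} alternating: the chain $H_{\{s_1,s_2\}}$ (or $H_{\{t,s_1\}}$ when $n=2$) contains two consecutive equal labels, and these heaps are precisely the exceptional members of (RP) and (LP) (Figure~\ref{fig:extremal_heaps}(a)); their special independent-replacement rule is already built into $RP^{\Delta_t}_n(q)$ via the correction term in \eqref{eq:RP_B}, which you are reusing for the peak families. So no ``extremal correction'' may be applied inside the (ALT) sum: doing so would double count. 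Second, the identification ``the case $a,b\geq 2$ gives $\frac{4q^{n+1}\touch{\Gen}_n(q)}{1-q^{n+1}}$'' is false. That term equals $4$ times the generating function of walks in $\Genset^*_n$ that never touch the $x$-axis (obtained by shifting down, as in the identity $\Gen^*_n(q)=\touch{\Gen}^*_n(q)+q^{n+1}\touch{\Gen}_n(q)/(1-q^{n+1})$), i.e.\ of alternating heaps with $|H_{v_i}|\geq 1$ for every $i$; this set neither contains nor is contained in the set of heaps with $a,b\geq 2$ (compare the height profiles $(1,1,\ldots,1)$ and $(2,1,0,\ldots,0,1,2)$), and the low-degree coefficients already disagree. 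The correct bookkeeping, as in the proof of Proposition~\ref{prop:afftypeB}, is to weight \emph{every} alternating heap by $4$ (the generic value of $|\Delta_{t,u}(H)|$), which produces $4\Gen^*_n(q)$ and hence the first term from its non-touching part, and then to add the corrections for heaps with $0$ or $1$ $t$-elements and $0$ or $1$ $u$-elements (weights $1$ and $2+q$ instead of $2$ at each end); these corrections together with $4\touch{\Gen}^*_n(q)$ are what must be shown to assemble into $U_n(q)$ as in \eqref{eq:ALT_D}. As written, your case-by-term matching would not reproduce the stated formula, so the proof of \eqref{eq:Dtilde} is incomplete at exactly the point where the content of the proposition lies.
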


\begin{proof}
Finite families are easily dealt with thanks to the proofs of types $\aff{C}_n$ and $\aff{B}_{n+1}$. The analysis of the family (ZZ)$_{\aff{D}_{n+2}}$ is similar to (ZZ)$_{\aff{B}_{n+1}}$. For the family (ALT)$_{\aff{D}_{n+2}}$, one needs to consider if the underlying FC heap $H$ has $0,1$ or at least two $t$-elements, and the same with $u$-elements. Thus one needs to compute the generating functions for the corresponding families of paths $\varphi(H)$, which is elementary but tedious, and we omit the details here. 
\end{proof}

%%%%%%%%%%%%%%%%%%%%%%
\subsection{Mean values}
\label{sub:MeanValuesBCD}
%%%%%%%%%%%%%%%%%%%%%
From these complicated generating functions it is not easy to obtain simple formulas for individual coefficients. In the article~\cite{JouhetNadeau}, more will be said about these, at least for the purely periodic part of the sequences.

Nevertheless one can compute already precisely what the {\em mean value} of these coefficients is. As we shall see in Section~\ref{sub:TL}, this tells us in particular how fast the corresponding Temperley--Lieb algebras grow with respect to their usual generators.

\begin{proposition}
\label{prop:meanBCD}
For $W$ of type $\aff{C}_n$ (\emph{resp.} $\aff{B}_{n+1}$, \emph{resp.} $\aff{D}_{n+2}$), the mean value $\mu_W$ of the growth sequence of FC elements is given by $2n+\frac{4^n}{n+1}$ (\emph{resp.} $ 2n+3+\frac{1}{2n+1}+\frac{2\cdot 4^n}{n+1}$, \emph{resp.} $ 2n+6+\frac{2+4^{n+1}}{n+1} $).
\end{proposition}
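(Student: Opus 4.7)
The plan is to apply Lemma~\ref{lemma:periodic} to each of the generating functions $\aff{C}_n^{FC}(q)$, $\aff{B}_{n+1}^{FC}(q)$, and $\aff{D}_{n+2}^{FC}(q)$ given in Propositions~\ref{prop:gf_affC}, \ref{prop:afftypeB}, and \ref{prop:afftypeD}. Since the mean value over a period is additive and polynomial summands contribute nothing, it will suffice to identify the genuinely rational summands $P(q)/(1-q^N)$ in each decomposition and sum their contributions $P(1)/N$. An inspection of \eqref{eq:Ctilde}, \eqref{eq:Btilde}, \eqref{eq:Dtilde} together with \eqref{eq:ZZ_B} and \eqref{eq:ZZ_D} shows that the only non-polynomial pieces are the one involving $\touch{\Gen}_n(q)/(1-q^{n+1})$ and the zigzag rational terms with denominators $(1-q)$ and, in types $\aff{B}$ and $\aff{D}$, either $(1-q^{2n+1})$ or $(1-q^{n+1})$. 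The remaining summands $\touch{\Gen}_n^*(q)$, $LP_n(q)$, $LRP_n(q)$, $T_n(q)$, $U_n(q)$, $RP^{\Delta_t}_n(q)$ are all polynomials in $q$, as can be checked directly from the defining series relations.

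The key computation is thus the evaluation $\touch{\Gen}_n(1)=4^n$. Setting $q=1$ in \eqref{eqfonctMetoile} yields $\Motz(x)|_{q=1}=C(x)$, the Catalan generating function satisfying $C=1+xC^2$. Then \eqref{relationA1} gives $\Motzbic(x)|_{q=1}=C/(1-xC)=C^2$, using $1-xC=1/C$. Setting $q=1$ in \eqref{systemB} yields $\Pos(x)|_{q=1}=C^2/(1-xC^2)=C^2/(2-C)$. Plugging these into \eqref{eq:touchgn} we obtain
\[
\touch{\Gen}(x)\big|_{q=1}=C^2\left(1+\frac{xC^2}{2-C}\right)^2=\frac{C^2}{(2-C)^2}.
\]
With $C=(1-\sqrt{1-4x})/(2x)$, a direct verification gives $2-C=\sqrt{1-4x}\cdot C$, whence $\touch{\Gen}(x)|_{q=1}=1/(1-4x)$. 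Extracting $[x^n]$ yields the claimed identity $\touch{\Gen}_n(1)=4^n$.

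With this in hand the proof concludes by routine bookkeeping. For type $\aff{C}_n$, the rational contributions to the mean are $\touch{\Gen}_n(1)/(n+1)$ and $2n/1$, summing to $2n+4^n/(n+1)$. For $\aff{B}_{n+1}$ the contributions are $2\cdot 4^n/(n+1)$, $2n+3$, and $1/(2n+1)$. For $\aff{D}_{n+2}$ they are $4^{n+1}/(n+1)$, $2n+6$, and $2/(n+1)$, which combine as $2n+6+(2+4^{n+1})/(n+1)$. The principal obstacle is establishing $\touch{\Gen}_n(1)=4^n$; alternatively, one could prove this combinatorially by extending the bilateral-walk bijection used in the proof of Proposition~\ref{prop:meanA} via Remark~\ref{rem:MotzToDyck}, but the algebraic route via $C(x)$ seems cleaner here.
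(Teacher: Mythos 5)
Your proposal is correct, and its skeleton is the same as the paper's: apply Lemma~\ref{lemma:periodic} summand by summand to the decompositions \eqref{eq:Ctilde}, \eqref{eq:Btilde}, \eqref{eq:Dtilde}, observe that the only non-polynomial pieces are the alternating term $q^{n+1}\touch{\Gen}_n(q)/(1-q^{n+1})$ (with multiplicity $1$, $2$, $4$) and the zigzag rational terms, and reduce everything to the single identity $\touch{\Gen}_n(1)=4^n$; your bookkeeping of the contributions $2n$, $2n+3+\tfrac{1}{2n+1}$, $2n+6+\tfrac{2}{n+1}$ matches the paper exactly. The one genuine difference is how the key identity is established. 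The paper does it in one combinatorial line, exactly in the spirit of Proposition~\ref{prop:meanA}: shifting a walk of $\touch{\Genset}_n$ so that it starts at the origin is a bijection onto unconstrained walks of length $n$ from the origin with steps $U,D,L,R$ (the inverse shifts a walk up so its minimum sits on the $x$-axis, which is legitimate precisely because walks in $\touch{\Genset}_n$ attain height $0$), and there are plainly $4^n$ such walks. You instead specialize the functional equations at $q=1$: from \eqref{eqfonctMetoile}, \eqref{relationA1}, \eqref{systemB} and \eqref{eq:touchgn} you get $\Motz|_{q=1}=C$, $\Motzbic|_{q=1}=C^2$, $\Pos|_{q=1}=C^2/(2-C)$ and $\touch{\Gen}|_{q=1}=C^2/(2-C)^2=1/(1-4x)$, using $2-C=\sqrt{1-4x}\,C$; I checked these manipulations and they are right (the specialization at $q=1$ is licit since all the relevant coefficient series $\touch{\Gen}_n(q),\Motz_n(q),\Motzbic_n(q),\Pos_n(q)$ are polynomials in $q$). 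The bijective argument is shorter and explains the number $4^n$ directly, while your algebraic route has the mild virtue of serving as a consistency check on the functional equations of Proposition~\ref{prop:gf_affC} -- and you yourself note the bijective alternative, which is what the paper actually does.
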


\begin{proof}
The proof goes as in Proposition~\ref{prop:meanA} and uses the simple Lemma~\ref{lemma:periodic}. By inspecting the generating functions involved, one checks that one only has to prove $\touch{\Gen}_n(1)=4^n$. Given any path in $\touch{\Genset}$, shift it first so that it starts at the origin. This is a bijection from $\touch{\Genset}$ to paths of length $n$ which start at the origin and are not necessarily positive, of which there are clearly $4^n$ since steps can then be chosen independently.
\end{proof}

%%%%%%%%%%%%%%%%%%%%%%
\subsection[Type B]{Types $B_n$ and $D_{n+1}$}
\label{sub:finBD}
%%%%%%%%%%%%%%%%%%%%%%
FC heaps of type $B_n$ (\emph{resp.} $D_{n+1}$) embed easily as FC heaps of type $\aff{C}_n$ (\emph{resp.} $\aff{B}_{n+1}$) which have no $u$-element. For type $B_n$, they correspond to elements of type $\aff{C}_n$ which belong to one of the two families (ALT) or (LP) and have at most one $s_{n-1}$-element; the same holds for type $D_{n+1}$ after applying the substitution map $\Delta_t$.

\begin{proposition}
\label{typeB} The generating polynomials $B_n^{FC}(q)$ and $D_{n+1}^{FC}(q)$ for $n\geq 2$ are the coefficients of $x^n$ in the following series:
 \begin{align}
 B^{FC}(x)&=\WPos(x) + \frac{x^2q^3}{1-xq^2} \Motz(x)\Motzbic(qx);\\
 D^{FC}(x) &= 2 \WPos(x) -\Motz(x) + \frac{xq^2}{1-xq^2}\Motz(x)\Motzbic(qx).
\end{align}
\end{proposition}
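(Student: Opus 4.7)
The plan is to use the embedding stated before the proposition: FC heaps of type $B_n$ (resp.\ $D_{n+1}$) embed as FC heaps of type $\aff{C}_n$ (resp.\ $\aff{B}_{n+1}$) with no $u$-element. By Theorem~\ref{theo:affineCfamilles}, such $\aff{C}_n$-heaps must lie in (ALT) or (LP), since the families (ZZ), (RP), (LRP) all contain $u$-elements by construction (from their right-peak or zigzag structure). Thus the generating function splits as an (ALT) plus an (LP) contribution, each computed via the walk encoding $\varphi$ of Theorem~\ref{theorem:walk_encoding}.

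The (ALT) contribution in type $B_n$ is immediate: $\varphi$ sends alternating FC heaps of $\aff{C}_n$ with $|H_u|=0$ bijectively to walks of length $n$ in $\Posset_n^*=\WPosset_n$, the length being preserved as total height. This gives the first summand $\WPos(x)$.

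For the (LP) contribution, I would decompose an (LP) heap into the peak $P_{\leftarrow}(s_j)$ and the right part $H_{\{s_j\rightarrow\}}$. The peak condition $H_{\{\leftarrow s_j\}}=P_{\leftarrow}(s_j)$ forces $|H_t|=1$ and $|H_{s_i}|=2$ for $1\leq i\leq j$, in particular $|H_{s_j}|=2$, contributing length $2j+1$. Deleting one $s_j$-element from $H_{\{s_j\rightarrow\}}$ yields an alternating heap on $\{s_j,\ldots,u\}$ with exactly one $s_j$-element and $|H_u|=0$; via $\varphi$ on $\Gamma_{n-j}$, this is a walk of length $n-j$ from $(0,1)$ to $(n-j,0)$ satisfying $(*)$. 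A first-passage decomposition at the initial descent to the $x$-axis (shift down by one, concatenate with a Motzkin-type walk) produces the series $xq\Motz(x)\Motzbic(qx)$ for such walks. Reincluding the deleted $s_j$ as $q$-weight, bundling with the peak factor, and summing $\sum_{j\geq 1}(xq^2)^j=\frac{xq^2}{1-xq^2}$ over peak positions yields the second summand $\frac{x^2q^3}{1-xq^2}\Motz(x)\Motzbic(qx)$.

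For $D_{n+1}$, I would apply the substitution $\Delta_t$ of Section~\ref{sub:FCaffBD} to the above. An (LP) heap has a unique $t$-element (from the peak), which $\Delta_t$ replaces by $t_1t_2$, multiplying the (LP) contribution by $q$. For an (ALT) heap, $\Delta_t$ produces $1$, $3$, or $2$ heaps according to whether $|H_t|$ equals $0$, $1$ or is $\geq 2$, with exactly one heap of length $+1$ in the case $|H_t|=1$. Writing $A_0(x)=\Motz(x)$ (walks $(0,0)\to(n,0)$) and $A_1(x)=xq\Motz(x)\Motzbic(qx)$ (walks $(0,1)\to(n,0)$, by the same first-passage argument used above), the total (ALT) contribution is $A_0+(2+q)A_1+2A_{\geq 2}$ where $A_{\geq 2}=\WPos-A_0-A_1$; this simplifies to $2\WPos(x)-\Motz(x)+xq^2\Motz(x)\Motzbic(qx)$. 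Adding the (LP) term $\frac{x^2q^4}{1-xq^2}\Motz(x)\Motzbic(qx)$ and collapsing the two $\Motz(x)\Motzbic(qx)$ coefficients yields the stated $D^{FC}(x)$. The main technical obstacle is verifying that the right-part walk starts at height exactly $1$ (not merely $\geq 1$), which uses the (LP) condition $|H_{s_j}|=2$ precisely: this is what produces $\Motzbic(qx)$ (start at $0$ after the shift) rather than $\Pos(qx)$ in the formula.
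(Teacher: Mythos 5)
Your proposal is correct and follows essentially the same route the paper intends (and leaves implicit): embed $B_n$ and $D_{n+1}$ heaps as $\aff{C}_n$- resp.\ $\aff{B}_{n+1}$-heaps with no $u$-element, observe via Theorem~\ref{theo:affineCfamilles} that only (ALT) and (LP) survive, encode by the walks of Theorem~\ref{theorem:walk_encoding} with a first-passage decomposition of the height-$1$-to-$0$ walks (exactly as in the proofs of Propositions~\ref{prop:gf_affC} and~\ref{prop:afftypeB}), and handle $D_{n+1}$ through the case analysis of $\Delta_t$ according to $|H_t|\in\{0,1,\geq 2\}$. The only wording to tighten is the weight bookkeeping for (LP): the factor $(xq^2)^j$ must account for the $t$, the pairs $s_1,\dots,s_{j-1}$, and the one $s_j$ not recorded by the walk (the walk's starting height already carries the other $s_j$), which is exactly what makes your stated answer $\frac{x^2q^3}{1-xq^2}\Motz(x)\Motzbic(qx)$ come out right.
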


Note that the FC elements of type ${B}_n$ corresponding to alternating heaps are called \emph{fully commutative top elements of ${B}_n$} by Stembridge in~\cite{St2}, and \emph{commutative elements of the Weyl group ${\mathcal C}_n$} by Fan in~\cite{Fan} (see also \cite[Remark~2.4]{St3}). Therefore $\WPos(x)$  gives a generating function for these particular elements.

%%%%%%%%%%%%%%%%%%%%%%%%%%%%%%%%%%%%%%%%%%%%%%%%%%%%

%%%%%%%%%%%%%%%%%%%%%%%%
\section[Exceptional types]{Exceptional types}
\label{sec:excep}
%%%%%%%%%%%%%%%%%%%%%%%%

In this section we study FC elements in the exceptional types. We start with the finite case this time, in which everything can be left to the computer. In the affine case, there are two types which have a finite number of FC elements, and they can be dealt with as in the finite case. The analysis is more subtle in the remaining three affine types.

\subsection{Finite case}
\label{sub:finExcept}

The exceptional types are $I_2(m),H_3,H_4,F_4,E_6,E_7,$ and $E_8$, whose Coxeter graphs were shown in Figure~\ref{fig:finite_diagrams}. For the dihedral group $I_2(m)$, only the element of maximal length is not FC. For the remaining types, we used the GAP package GBNP to find the generating polynomials $W^{FC}(q)$ in each case, as mentioned in the introduction and explained in Section~\ref{sub:TL}.

\begin{eqnarray*}
I_2(m)^{FC}(q)&=&1+2q+2q^2+\cdots+2q^{m-1}=1+{2q(1-q^{m-1})}/(1-q);\\
H_3^{FC}(q)&=&q^{10}+2q^{9}+3q^{8}+4q^{7}+5q^{6}+7q^{5}+7q
^{4}+6q^{3}+5q^{2}+3q+1;\\
H_4^{FC}(q)&=&q^{16}+2q^{15}+3q^{14}+4q^{13}+8q^{12}+12q^{11}+16q^{10}+18q^{9}+20q^{8}+21q^{7}\\
&&+23q^{6}+21q^{5}+18q^{4}+14q^{3}+9q^{2}+4q+1;\\
F_4^{FC}(q)&=&2q^{10}+4q^{9}+8q^{8}+12q^{7}+16q^{6}+18q^{5}+
18q^{4}+14q^{3}+9q^{2}+4q+1;\\
E_6^{FC}(q)&=&2q^{16}+4q^{15}+6q^{14}+8q^{13}+14q^{12}+27q^{
11}+40q^{10}+53q^{9}+76q^{8}\\
&&+91q^{7}+99q^{6}+95q^{5}+75q^{4}+45q^{3}+20q^{2}+6q+1;\\
E_7^{FC}(q)&=&q^{27}+2q^{26}+3q^{25}+4q^{24}+5q^{23}+8q^{22}+
11q^{21}+14q^{20}+17q^{19}\\
&&+22q^{18}+39q^{17}+57q^{16}+73q^{15}+91q^{14}+125q^{13}+159q^{12}+198q^{11}\\
&&+236q^{10}+275q^{9}+297q^{8}+298q^{7}+273q^{6}+216q^{5}+140q^{4}+71q^{3}\\
&&+27q^{2}+7q+1;\\
E_8^{FC}(q)&=&15q^{29}+30q^{28}+43q^{27}+56q^{26}+69q^{25}+83q^{24}+113q^{23}
+143q^{22}\\
&&+171q^{21}+205q^{20}+259q^{19}+319q^{18}+387q^{17}+457q^{16}
+527q^{15}\\
&&+609q^{14}+701q^{13}+794q^{12}+867q^{11}+924q^{10}+936{
q}^{9}+897q^{8}\\
&&+796q^{7}+631q^{6}+427q^{5}+238q^{4}+105q^{3}+35q^{2}+8q+1.
\end{eqnarray*}

Note that the number of FC elements in a Coxeter group may be finite even though the group itself is infinite: Stembridge~\cite{St1} discovered that there are  three families $E_n (n>8), F_n (n>4), H_n (n>4)$ of infinite groups with a finite number of FC elements. Extending Stembridge~\cite{St3} with similar walk techniques, it is possible to enumerate such elements according to their length.
% and give formulas in terms of certain walks for each of these.
%\todo{il faut donc le faire, enfin si on a envie}

\subsection{Affine case}
\label{sub:affExcept}

The exceptional irreducible affine types are $\aff{E}_6,\aff{E}_7,\aff{E}_8,\aff{F}_4,$ and $\aff{G}_2$, and shown in Figure~\ref{fig:affinediagrams}. The number of FC elements in types $\aff{F}_4$ or $\aff{E}_8$ is finite, since they correspond in Stembridge's classification~\cite[Theorem 4.1]{St1} to types ${F}_5$ and ${E}_9$.

\begin{align*}
\aff{F}_4^{FC}(q)&=q^{18}+2q^{17}+3q^{16}+6q^{15}+9q^{14}+13q^{13}+18q^{12}+27q^{11}+35q^{10}\\
&+44q^{9}+52q^{8}+57q^{7}+57q^{6}+52q^{5}+41q^{4}+27q^{3}+14q^{2}+5q+1;\\
\aff{E}_8^{FC}(q)&=q^{44}+2q^{43}+3q^{42}+4q^{41}+5q^{40}+6q^{39}+9q^{38}\\
&+16q^{37}+23q^{36}+ 32q^{35}+47q^{34}+68q^{33}+97q^{32}+138q^{31}+256q^{30}\\
&+368q^{29}+462q^{28}+562q^{27}+669q^{26}+786q^{25}+916q^{24}+1065q^{23}+1199q^{22}\\
&+1355q^{21}+1529q^{20}+1728q^{19}+1916q^{18}+2118q^{17}+2298q^{16}+2494q^{15}\\
&+2693q^{14}+2866q^{13}+2970q^{12}+3002q^{11}+2923q^{10}+2710q^{9}\\
&+2354q^{8}+1862q^{7}+1297q^{6}+770q^{5}+378q^{4}+148q^{3}+44q^{2}+9q+1.
\end{align*}

\begin{figure}[!ht]
\begin{center}
\includegraphics[height=8cm]{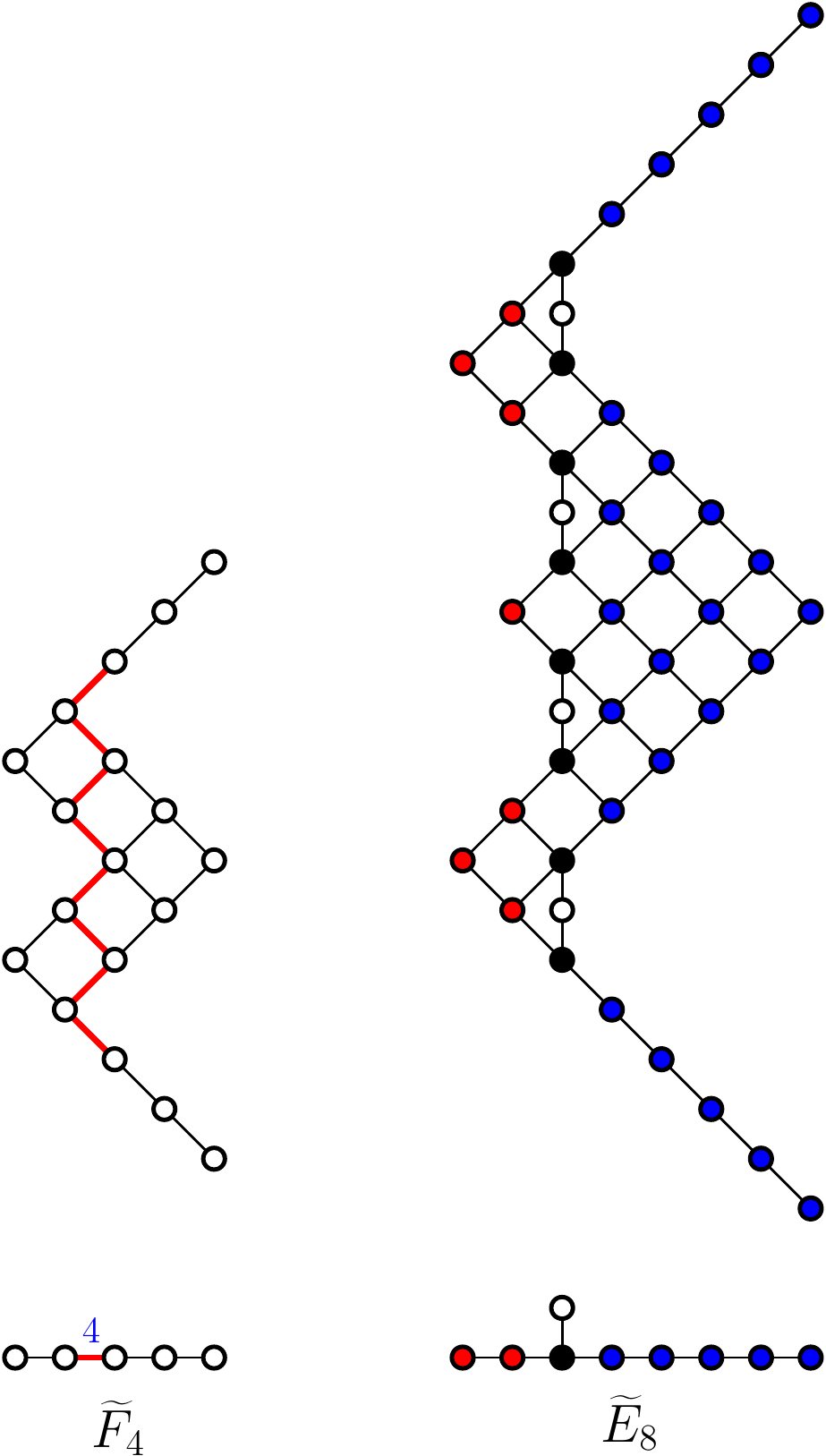}
\caption{The unique longest FC elements in types $\aff{F}_4$ and $\aff{E}_8$.
\label{fig:exceptional_affine_finite}
}
\end{center}
\end{figure}

Notice that the leading coefficient of each of these polynomials is equal to 1; the corresponding longest heaps are depicted in Figure~\ref{fig:exceptional_affine_finite}.
\smallskip

%\todo{j'inclus les lemmes 5.2--5.4 de St1; en parler dans la partie sur $\aff{C}_n$, les lemmes qu'on a sont similaires}
In the remaining three types, we will make repeated use of the following lemma, whose proof presents no difficulty. A {\em simple branch} in a Coxeter graph $\Gamma$ is a path $t,s_1,s_2,\ldots,s_m$, where all edges have label $3$ (see Figure~\ref{f:simplebranch}). The next lemma corresponds essentially to Lemmas 5.3 and 5.4 in \cite{St1}. Its proof is an easy consequence of Lemma~\ref{lemma:local}.

\begin{lemma}[Stembridge \cite{St1}]
\label{lemma:stemb_simple}
Let $t,s_1,s_2,\ldots,s_m$ be a simple branch in a  diagram $\Gamma$, where $s_m$ has degree $1$, and $H$ a FC heap of type $\Gamma$. Then the following hold true:
\begin{enumerate}
\item there is at least one $t$-element in $H$ between any two $s_1$-elements.
\item If $H_{\{s_1,t\}}$ contains an alternating factor with $l$ $s_1$-elements, then $l\leq m$.
\end{enumerate}
 \end{lemma}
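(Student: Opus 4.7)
The plan is to prove part~(1) first by an iterated application of Lemma~\ref{lemma:local} along the simple branch, and then to deduce part~(2) by induction, reusing (1) and the convex-chain argument of Proposition~\ref{prop:heaps_fullycom}.

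For part~(1), I would argue by contradiction: suppose $H$ contains two $s_1$-elements $x<y$ with no $t$-element between them. They are then consecutive in $H_{\{s_1\}}$, so $\mathbf{w}^{(0)}:=s_1 s_1$ is a factor of $H_{\{s_1,t\}}$ satisfying hypothesis (a) of Lemma~\ref{lemma:local} vacuously and (b) by construction. Applying that lemma with $(v_1,v_2,v_3)=(t,s_1,s_2)$ produces a factor $\mathbf{w}^{(1)}$ of $H_{\{s_1,s_2\}}$ whose $s_1$-elements are never consecutive but containing two consecutive $s_2$-elements. Iterating along the branch with $(v_1,v_2,v_3)=(s_{i-1},s_i,s_{i+1})$ for $i=2,\ldots,m-1$, one obtains a factor $\mathbf{w}^{(m-1)}$ of $H_{\{s_{m-1},s_m\}}$ containing two $s_m$-elements $u<v$ with no $s_{m-1}$-element between them in $H$. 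By Proposition~\ref{prop:heaps_fullycom}(b), $u<v$ is not a covering relation, so there exists a cover $z$ of $u$ in $H$ with $z<v$; the remark following Definition~\ref{defi:heaps} forces $\epsilon(z)$ to be equal or adjacent to $s_m$ in $\Gamma$. Since $\epsilon(z)=s_m$ is impossible ($u,v$ are consecutive in $H_{\{s_m\}}$) and $s_m$ has degree $1$, we must have $\epsilon(z)=s_{m-1}$, contradicting the hypothesis.

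For part~(2), I would induct on $i\in\{0,\ldots,m-1\}$ (setting $s_0:=t$) to show that $H_{\{s_i,s_{i+1}\}}$ contains an alternating factor with at least $l-i$ occurrences of $s_{i+1}$; the base case $i=0$ is the assumption. In the inductive step, given such a factor with $k\geq l-i$ $s_{i+1}$-elements, pick two consecutive such elements $u<v$: alternation places exactly one $s_i$-element between them in $H$, and the forbidden convex chain $s_{i+1}s_is_{i+1}$ combined with the cover-minimal argument used in (1) forces a further intermediate element, necessarily of label $s_{i+2}$. The sub-chain of $H_{\{s_{i+1},s_{i+2}\}}$ spanning the $k$ chosen $s_{i+1}$-elements is then a factor with at least one $s_{i+2}$ in each gap; applying part~(1) to the sub-branch $s_{i+1},s_{i+2},\ldots,s_m$ (still a simple branch of $\Gamma$ ending at the degree-$1$ vertex $s_m$) rules out two consecutive $s_{i+2}$-elements, so this factor is alternating with $k-1\geq l-(i+1)$ $s_{i+2}$-elements. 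Taking $i=m-1$: if $l\geq m+1$ we obtain two consecutive $s_m$-elements of an alternating factor of $H_{\{s_{m-1},s_m\}}$ separated by exactly one $s_{m-1}$-element, and the same cover argument as in (1) yields a contradiction, proving $l\leq m$.

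The main technical point is the cover analysis at the degree-$1$ endpoint $s_m$: one must carefully justify that any element of $H$ covering an $s_m$-element has label in $\{s_m,s_{m-1}\}$, which is an immediate consequence of the description of the heap order as the transitive closure of the defining chains. Once this is secured, both parts reduce to mechanical iterations along the simple branch powered by Lemma~\ref{lemma:local} and the ingredients of its proof.
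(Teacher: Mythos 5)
Your proof is correct and matches the paper's intended route: the paper disposes of this lemma with the single remark that it is an easy consequence of Lemma~\ref{lemma:local}, and your argument is exactly the natural elaboration, iterating that lemma along the branch to the degree-one endpoint for part (1) and reusing its convex-chain/cover ingredients together with part (1) for the induction in part (2). One phrase to tighten: in the inductive step the intermediate element witnessing non-convexity is not itself ``necessarily of label $s_{i+2}$''; rather, any cover of $u$ lying below $v$ (or any element covered by $v$ lying above $u$) must carry label $s_i$ or $s_{i+2}$, and since the unique intermediate $s_i$-element cannot play both roles for a further intermediate element, some such cover is labeled $s_{i+2}$ --- which is the two-sided form of the cover-minimal argument you invoke.
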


 \begin{figure}
 \begin{center}
\includegraphics[width=0.5\textwidth]{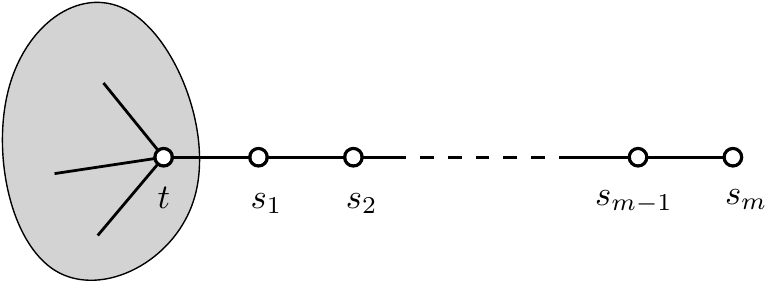}\label{f:simplebranch}
\end{center} 
\caption{A simple branch.}
\end{figure}
\smallskip

To compute the growth sequence in each case, we need to prove that, starting from a certain length $\ell_0$+1, it has period $P$. Then the knowledge of all terms up to length $\ell_0+P$ suffices. We explain in Section~\ref{sub:TL} how to use the \texttt{GAP} package \texttt{GBNP} to perform this.

\subsubsection{Type $\aff{G}_2$}

The key lemma is the following.

\begin{lemma}
Let $H$ be a FC heap of type $\widetilde{G}_2$ with more than four $t$-elements. Then the interval between extremal $t$-elements is a factor of the word $(tsutu)^N$ for a certain integer $N>0$.
\end{lemma}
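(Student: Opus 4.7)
The plan is to use the convention for $\widetilde G_2$ under which the word $(tsutu)$ is FC and reduced, namely the Coxeter graph is the path $s\,\text{--}\,t\,\text{--}\,u$ with $m_{ts}=3$, $m_{tu}=6$ and $m_{su}=2$; so $s$ and $u$ commute and $t$ is the middle vertex. Write $t^{(1)}<\cdots<t^{(k)}$ for the $t$-elements of $H$ (with $k\geq 5$), and $G_i:=(t^{(i)},t^{(i+1)})$ for the open poset interval. I would first classify the possible gaps: every element of $G_i$ has label $s$ or $u$; Proposition~\ref{prop:heaps_fullycom}(b) rules out two $s$- or two $u$-elements inside a single gap, and also forces $G_i$ to be nonempty; moreover $G_i=\{s_i\}$ alone would yield a convex $tst$-chain, forbidden by Proposition~\ref{prop:heaps_fullycom}(a) since $m_{ts}=3$. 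Hence each $G_i$ is either $\{u_i\}$ or $\{s_i,u_i\}$.

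Next I would show that consecutive gaps must alternate between these two types. Applying Lemma~\ref{lemma:stemb_simple}(2) to the simple branch $(t,s)$ of length one, $H_{\{s,t\}}$ cannot contain an alternating factor $sts$; so if $G_i=G_{i+1}=\{s_i,u_i\}$, then $s_i,t^{(i+1)},s_{i+1}$ would appear consecutively in $H_{\{s,t\}}$, producing exactly such a forbidden factor. Conversely, suppose $G_i=G_{i+1}=\{u\}$. Since $k\geq 5$, at least one of the neighbours $G_{i-1}$ or $G_{i+2}$ exists; by left-right symmetry assume $G_{i-1}$ does, and let $u_{i-1}'$ be its $u$-element. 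Then
\[
u_{i-1}'\prec t^{(i)}\prec u_i\prec t^{(i+1)}\prec u_{i+1}\prec t^{(i+2)}
\]
is a six-element chain with alternating $t,u$-labels, and I would argue it is convex in $H$: any intruder must be an $s$-element, but any $s$ belonging to an adjacent $\{s,u\}$-gap is either incomparable to $u_{i-1}'$ (because $s$ and $u$ commute within a single gap, and so the $s$ and $u$ of a common gap are never comparable in the heap) or already lies strictly above $t^{(i+2)}$. This convex six-alternating chain contradicts Proposition~\ref{prop:heaps_fullycom}(a) for $m_{tu}=6$, so no two consecutive $\{u\}$-gaps either.

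Combining the two forbidden transitions, $G_1,\ldots,G_{k-1}$ must alternate strictly between $\{s,u\}$ and $\{u\}$. Reading the poset interval between $t^{(1)}$ and $t^{(k)}$ via a linear extension that places the $s$-element before the $u$-element inside each $\{s,u\}$-gap gives a word of the form $\cdots t\,s\,u\,t\,u\,t\,s\,u\,t\,u\cdots$, which is visibly a factor of $(tsutu)^N$ for any sufficiently large $N$. The delicate part is the convexity verification in the second forbidden transition: one must track carefully which elements of $H$ can possibly lie in the open interval $(u_{i-1}',t^{(i+2)})$, a check that depends crucially on the commutativity of $s$ and $u$ keeping the two elements of a common $\{s,u\}$-gap incomparable.
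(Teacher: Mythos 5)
Your proof is correct and follows essentially the same route as the paper's: you classify each gap between consecutive $t$-elements as $\{u\}$ or $\{s,u\}$ using Proposition~\ref{prop:heaps_fullycom}, exclude two consecutive $\{s,u\}$-gaps via Lemma~\ref{lemma:stemb_simple} applied to the branch $(t,s)$, and exclude two consecutive $\{u\}$-gaps via a convex alternating $\{t,u\}$-chain of length $6$, exactly the three ingredients of the paper's argument. The only difference is that you spell out the convexity check (incomparability of the $s$- and $u$-elements of a common gap) which the paper leaves implicit; this is a welcome but not essential addition.
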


An example of such a factor is illustrated in Figure~\ref{fig:exceptional_affine}, left.

\begin{proof}
 Let $t^{(1)}<\dots<t^{(k)}$ be the $t$-elements in $H$. They determine $k-1$  intervals $H_1,\ldots, H_{k-1}$ with $H_i:=]t^{(i)},t^{(i+1)}[$. Define also two other intervals $H_0$ (\emph{resp.} $H_k$) corresponding  to elements smaller than $t_1$ (\emph{resp.} larger than $t^{(k)}$). By Definition~\ref{defi:heaps} it is clear that these $k+1$ intervals are pairwise disjoint and contain all $s$- and $u$-elements.

 Consider first the chain $H_{\{t,u\}}$. No two $u$-elements in $H_{\{t,u\}}$ can be consecutive by Proposition~\ref{prop:heaps_fullycom}(b). No two $t$-elements can be consecutive either: indeed, if $t^{(i)}$ and $t^{(i+1)}$ were not separated by a $u$-element, then by Proposition~\ref{prop:heaps_fullycom}(b) they would be separated by exactly one $s$-element, which then contradicts Proposition~\ref{prop:heaps_fullycom}(a). We have thus proven that $H_{\{t,u\}}$ is alternating.

Now consider the chain $H_{\{s,t\}}$. Here $t,s$ constitutes a simple branch with $m=1$, so by Lemma~\ref{lemma:stemb_simple} there is at most one $s$-element in each $H_i$, and there are no $s$-elements in two consecutive $H_i$'s.

Now assume $k\geq 4$. Before going further, notice the important fact that there is but a finite number of heaps with $k\leq 3$, as follows easily from the previous reasoning; in fact the maximal size of such heaps is easily bounded above by $9$. Now suppose  that there exists $i\in\{1,\ldots,k-2\}$ such that both $H_i$ and $H_{i+1}$ contain no $s$-element. Then because $H_{\{t,u\}}$ is alternating and $k\geq 4$,  $H$ contradicts Proposition~\ref{prop:heaps_fullycom}(a) with a convex $\{t,u\}$-chain of length $6$.
So $s$-elements must occur in every other $H_i$ for $i\in\{1,\ldots,k-1\}$, which proves the proposition.
\end{proof}

It is then easy to prove that the growth sequence for FC elements in type $\aff{G}_{2}$ has periodicity $P=5$, starting from  length $\ell_0+1=8$, which corresponds to the repetition of the length $5$-pattern $tsutu$. Then the whole series is  
\begin{align*}
\aff{G}_2^{FC}(q)&=1+3q+5q^{2}+6q^{3}+7q^{4}+9q^{5}+9q^{6}+8q^{7}\\
&+7q^{8}+7q^{9}+8q^{10}+7q^{11}+7q^{12}\\
&+7q^{13}+7q^{14}+8q^{15}+7q^{16}+7q^{17}\\&+\cdots
\end{align*}

%Occurs in Stembridge's point (6), bottom of page 367 in~\cite{St1}.

\begin{figure}[!ht]
\begin{center}
\includegraphics[height=8cm]{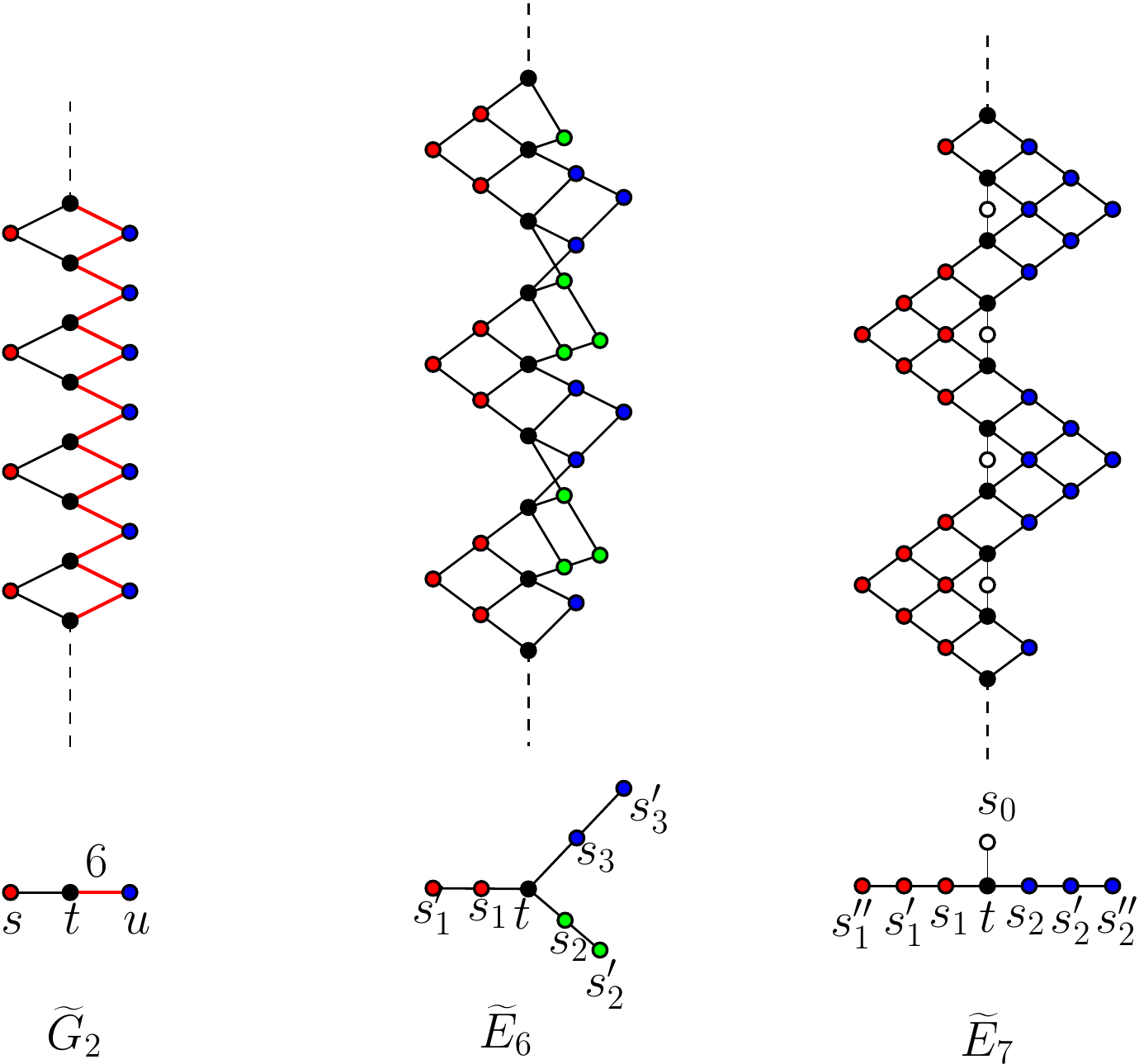}
\caption{ Central sections of long FC elements.
\label{fig:exceptional_affine}
}
\end{center}
\end{figure}

\subsubsection{Type $\aff{E}_6$}

In this case, the key lemma is the following.

\begin{lemma}
Let $H$ be a FC heap of type $\widetilde{E}_6$ with more than five $t$-elements. Then the interval between extremal $t$-elements occurs as an interval in $\H(\mathbf{w})$ with
\[\mathbf{w}=\left((s_1ts_1's_1)(s_2ts_2's_2)(s_3ts_3's_3)\right)^N\quad\text{or}\quad\mathbf{w}=\left((s_1ts_1's_1)(s_3ts_3's_3)(s_2ts_2's_2)\right)^N,
\]
for a certain integer $N>0$.
\end{lemma}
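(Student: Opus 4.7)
The plan is to emulate the argument used for $\aff{G}_2$. Label the $t$-elements of $H$ as $t^{(1)} < \cdots < t^{(k)}$ (with $k \geq 6$) and consider the open intervals $H_i := \,]t^{(i)}, t^{(i+1)}[$ for $i = 1, \ldots, k-1$. Each of the three branches emanating from $t$ is a simple branch $t, s_j, s_j'$ of length $m = 2$ with $s_j'$ of degree $1$, so Lemma~\ref{lemma:stemb_simple}(1) applied to each such branch gives that any $H_i$ contains at most one $s_j$-element for each fixed $j \in \{1,2,3\}$. Moreover, no $s_j'$-element can lie strictly between two consecutive $t$-elements: such a position would require two distinct $s_j$-elements within the same $H_i$ in order to route the necessary comparabilities between $t^{(i)}, s_j', t^{(i+1)}$, contradicting the previous assertion.

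Define $b_i^{(j)} \in \{0,1\}$ as the indicator that $H_i$ contains an $s_j$-element and set $c_i := b_i^{(1)} + b_i^{(2)} + b_i^{(3)}$. The key claim is that $c_i = 2$ for every interior index $i \in \{1, \ldots, k-1\}$. The values $c_i \in \{0,1\}$ are forbidden by Proposition~\ref{prop:heaps_fullycom}: $c_i = 0$ produces the same-labelled covering $t^{(i)} \lessdot t^{(i+1)}$, while $c_i = 1$ gives a convex alternating chain $t^{(i)} < s_j < t^{(i+1)}$ of length $3$. To rule out $c_i = 3$, Lemma~\ref{lemma:stemb_simple}(2) with $m = 2$ yields the key inequality $b_{i-1}^{(j)} + b_i^{(j)} + b_{i+1}^{(j)} \leq 2$ for every $j$; summing over $j$ gives $c_{i-1} + c_i + c_{i+1} \leq 6$, which combined with $c_{i-1}, c_{i+1} \geq 2$ (and analogous reasoning handling the boundary indices $i \in \{1, k-1\}$ using a shifted triple of indices) forces $c_i \leq 2$. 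Letting $j(i)$ denote the unique index missing from $H_i$, the same inequality states that among any three consecutive interior indices each of $1, 2, 3$ must appear as $j(\cdot)$ at least once; with exactly three slots available, $(j(i), j(i+1), j(i+2))$ is therefore a permutation of $(1, 2, 3)$. Consequently $j$ is purely periodic of period $3$ and realises one of the two cyclic orderings $(1, 2, 3)$ or $(1, 3, 2)$.

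It remains to locate the $s_j'$-elements. For each pair of $s_j$-elements residing in consecutive intervals $H_i, H_{i+1}$ (which corresponds to a block $(s_j t s_j' s_j)$ in $\mathbf{w}$), Proposition~\ref{prop:heaps_fullycom}(a) forces exactly one $s_j'$-element strictly between them: without such an $s_j'$, the chain $s_j < t^{(i+1)} < s_j$ would be convex in $H$, for all other potential intermediate elements (the $s_{j''}$'s sharing $H_i$ or $H_{i+1}$, other $t$'s, etc.) turn out to be incomparable with at least one of the two $s_j$'s; uniqueness is then ensured by Proposition~\ref{prop:heaps_fullycom}(b). A symmetric argument rules out any extra $s_j'$-element between two $s_j$'s from different factor-blocks: such an insertion would produce a convex alternating chain $s_j' s_j s_j'$ with a neighbouring $s_j'$ in $H_{\{s_j,s_j'\}}$, since the $t$-elements and $s_{j''}$'s in the relevant range are incomparable with the added $s_j'$. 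Assembling these facts yields an isomorphism between the labelled interval $[t^{(1)}, t^{(k)}]$ of $H$ and an interval of $\H(\mathbf{w}^N)$ for the appropriate $\mathbf{w}$ and $N$. The main technical difficulty should lie in this last step, where one must carefully track comparability relations in order to exclude all spurious placements of $s_j'$-elements; the remainder of the proof parallels the $\aff{G}_2$ argument closely.
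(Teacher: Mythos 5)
Your proof is correct and takes essentially the same route as the paper's: decompose the heap by the intervals between consecutive $t$-elements, use Lemma~\ref{lemma:stemb_simple} for the three simple branches $t,s_j,s_j'$ together with Proposition~\ref{prop:heaps_fullycom} to force exactly two $s$-labels per interval rotating cyclically, and then pin down the placement of the $s_j'$-elements. Your counting inequality forcing $c_i=2$ and your explicit convex $s_j's_js_j'$-chain argument excluding spurious $s_j'$-elements are mild streamlinings of the corresponding steps in the paper (which instead argues that the exceptional configurations force $k\leq 3$ or $k=4$), but the substance and the key lemmas are the same.
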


Such a factor is illustrated in Figure~\ref{fig:exceptional_affine}, center.

%See~\cite[Figure 7]{St1}, and Figure~\ref{fig:exceptional_affine}, center.

\begin{proof}
Let $H$ be a FC heap with $k\geq 0$ $t$-elements $t^{(1)}<\dots<t^{(k)}$. By Proposition~\ref{prop:heaps_fullycom}, there are at least two elements with labels in $\{s_1,s_2,s_3\}$ between $t^{(i)}$ and $t^{(i+1)}$ for any $i<k$. But by Lemma~\ref{lemma:stemb_simple}, there is at most one $s_j$-element between $t^{(i)}$ and $t^{(i+1)}$  for $j=1,2,3$. It follows that between two successive $t$-elements, there are either (a) two elements labeled with distinct labels in $\{s_1,s_2,s_3\}$ or (b) one $s_j$-element for $j=1,2,3$.

Assume Case (b) occurs in $H$, say between $t^{(i)}$ and $t^{(i+1)}$; we will show that $k=2$ or $3$ in this case. If there exists no other $t$-element, we are done: otherwise we can assume $t^{(i+2)}$ is defined, up to considering the dual heap. There are at least two elements between $t^{(i+1)}$ and $t^{(i+2)}$ as seen above; by the symmetry of the Coxeter diagram, we can assume they have labels $s_1$ and $s_2$. No $t$-element can occur below $t^{(i)}$ or above $t^{(i+2)}$: indeed this would force either a $s_1$- or $s_2$-element to occur there, which is impossible by Lemma~\ref{lemma:stemb_simple}. Therefore $i=1$ and $k=3$ in this case.

Now we assume $k\geq 4$, so that case (a) holds: between two successive $t$-elements, there are two elements with distinct labels in $\{s_1,s_2,s_3\}$. Consider first $t^{(1)},t^{(2)}$ and assume without loss of generality that these labels are $\{s_1,s_2\}$. The possible labels in $]t^{(2)},t^{(3)}[$ are then either $\{s_1,s_3\}$ or $\{s_2,s_3\}$; indeed  $\{s_1,s_2\}$ cannot occur since the same argument as in the previous paragraph shows that this forces $k=3$. We can assume these labels are $\{s_2,s_3\}$ thanks to the Coxeter graph symmetry. Now the possible labels in $]t^{(3)},t^{(4)}[$ are $\{s_1,s_2\}$ are $\{s_1,s_3\}$, but the first possibility is excluded by Lemma~\ref{lemma:stemb_simple}. Following the analysis, we see that the labels are alternatively $\{s_1,s_2\},\{s_2,s_3\},\{s_3,s_1\},\{s_1,s_2\}$ and so on. 

So the $s_j$-elements fit with the statement of the lemma; it remains to see that the $s'_j$-elements do too. By Lemma~\ref{lemma:stemb_simple}, there is at most one $s'_j$-element between two consecutive $s_j$-elements, and exactly one if these $s_j$-elements occur in $]t^{(i)},t^{(i+1)}[$ and $]t^{(i+1)},t^{(i+2)}[$ for a certain $i$. To contradict the statement of the lemma, there must therefore be a $s'_j$-element between two $s_j$-elements occurring in $]t^{(i)},t^{(i+1)}[$ and $]t^{(i+2)},t^{(i+3)}[$ respectively. It is easy to see that in this case, no other $t$-element can occur, so that $i=1$ and $k=4$ is the only possibility. Therefore the lemma holds since it supposes $k\geq 5$.
\end{proof}

\begin{figure}
\begin{center}
\includegraphics{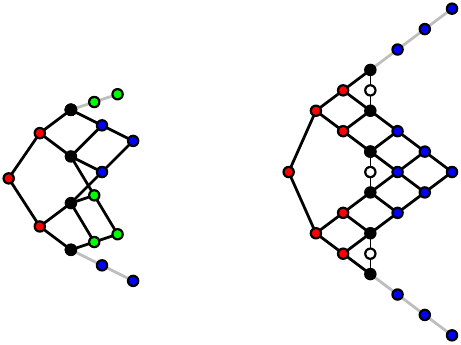}
\caption{ Special elements in type $\aff{E}_6$ and $\aff{E}_7$.
\label{fig:specialE6E7}
}
\end{center}
\end{figure}

By analysing the previous proof, one checks that the longest length among FC elements with $k\leq 4$ is $\ell_0=17$, an example being given in Figure~\ref{fig:specialE6E7}, left. Also there is only a finite number of heaps with a given extremal $t$-interval for $k\geq 5$. So periodicity of the growth sequence for FC elements of type $\aff{E}_6$ is $P=4$, and the series is
\begin{align*}
  \aff{E}_6^{FC}(q)&=1+7q+27q^{2}+71q^{3}+141q^{4}+220q^{5}+284q^{6}  +322q^{7}+338q^{8}+327q^{9}\\
  &+298q^{10}+269q^{11}+232q^{12}+177q^{13}+150q^{14}+138q^{15}+138q^{16}+126q^{17}\\
  &+120q^{18}+120q^{19}+126q^{20}+120q^{21}\\
  &+120q^{22}+120q^{23}+126q^{24}+120q^{25}+\cdots
\end{align*}

%%%%%%%%%%%%%%%%%%
\subsubsection{Type $\aff{E}_7$}

\begin{lemma}
Let $H$ be a FC heap of type $\widetilde{E}_7$ with $k\geq 7$ $t$-elements. Then the interval between extremal $t$-elements occurs as an interval in $\H(\mathbf{w})$ with
\[\mathbf{w}=\left((s_1ts'_1s_0s'_1s''_1ts'_1s_1)(s_2ts'_2s_0s'_2s''_2ts'_2s_2)\right)^N,
\]
for a certain integer $N>0$.

\end{lemma}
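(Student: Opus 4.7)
The plan mirrors the proofs of the analogous lemmas in types $\aff{G}_2$ and $\aff{E}_6$. Let $H$ be a FC heap of type $\aff{E}_7$ with $t$-elements $t^{(1)}<\cdots<t^{(k)}$, and for $i\in\{1,\ldots,k-1\}$ set $H_i = \,]t^{(i)},t^{(i+1)}[$. The Coxeter graph $\aff{E}_7$ decomposes at the central degree-three vertex $t$ into three simple branches: the short one $(t,s_0)$ of length one, and two long ones $(t,s_1,s'_1,s''_1)$ and $(t,s_2,s'_2,s''_2)$ of length three. First I would gather local information: by Proposition~\ref{prop:heaps_fullycom}(b) each $H_i$ is nonempty, and by Lemma~\ref{lemma:stemb_simple}(1) applied to each branch, each of the labels $s_0,s_1,s_2$ occurs at most once in any given $H_i$.

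Next, Lemma~\ref{lemma:stemb_simple}(2) yields sequential constraints on the sequence $(H_i)_i$. For the short branch ($m=1$), no alternating factor $s_0\,t\,s_0$ is admissible in $H_{\{s_0,t\}}$, so no two consecutive intervals $H_i,H_{i+1}$ can both contain an $s_0$-element. For the two long branches ($m=3$), no alternating factor with four $s_j$-elements can appear in $H_{\{s_j,t\}}$, so $s_j$-elements occupy at most three consecutive intervals ($j=1,2$). Proposition~\ref{prop:heaps_fullycom}(a) applied to the forbidden convex chains $t\,s_\alpha\,t$ and $s_\alpha\,t\,s_\alpha$ with $\alpha\in\{0,1,2\}$ then couples these per-label constraints into compatibility conditions between the labels appearing in adjacent intervals. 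Finally, propagating Lemma~\ref{lemma:stemb_simple}(1) along the sub-branches $(s_j,s'_j,s''_j)$ and $(s'_j,s''_j)$ for $j=1,2$ shows that, once the content of $H_i$ in labels $s_0,s_1,s_2$ is known, the positions and multiplicities of the $s'_j$- and $s''_j$-elements are entirely determined.

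Encoding each $H_i$ by its label content among $\{s_0,s_1,s_2\}$, the admissible patterns form a discrete dynamical system on a small alphabet with explicit transition rules. The crux is then to classify all sequences compatible with all the above constraints and to show that, for $k\geq 7$, the only one is the periodic pattern prescribed by $\mathbf{w}$. This final case analysis is what I expect to be the main obstacle: the interplay of two long arms of length three with the short arm of length one through the central vertex produces significantly more configurations than in types $\aff{G}_2$ or $\aff{E}_6$, and each exceptional candidate pattern must be eliminated either by exhibiting a forbidden convex chain (via Proposition~\ref{prop:heaps_fullycom}) or by re-invoking Lemma~\ref{lemma:stemb_simple} on a suitable sub-branch. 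The threshold $k\geq 7$ arises because short transient patterns not matching $\mathbf{w}$ remain possible for smaller $k$, in a way that parallels the role of the bound $k\geq 5$ in the $\aff{E}_6$ proof.
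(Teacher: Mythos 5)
Your setup coincides with the paper's: the same decomposition into intervals $H_i=\,]t^{(i)},t^{(i+1)}[$ between successive $t$-elements, and the same local input, namely Proposition~\ref{prop:heaps_fullycom} together with Lemma~\ref{lemma:stemb_simple} applied to the three branches, giving at most one $s_0$-, $s_1$-, $s_2$-element per interval, no $s_0$ in consecutive intervals, and each of $s_1,s_2$ in at most three consecutive intervals. But your argument essentially stops there: the classification of admissible interval sequences, which is the actual content of the lemma, is announced as ``the main obstacle'' and left unexecuted. This is where the paper's (admittedly sketched) proof does its work: (i) an interval containing all three labels $s_0,s_1,s_2$ forces $k\le 4$; (ii) for $k\ge 5$ every interval contains exactly two distinct labels among $s_0,s_1,s_2$, and two consecutive intervals without $s_0$ force $k\le 5$; hence for $k\ge 6$ the $s_0$-intervals and the $\{s_1,s_2\}$-intervals strictly alternate, and the three-consecutive-interval bound then forces the $s_1/s_2$ alternation among the $s_0$-intervals. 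None of these deductions appear in your proposal beyond the raw local constraints, so as it stands this is a plan rather than a proof.

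The one substantive claim you do make about the missing step -- that once the $s_0,s_1,s_2$ content of the intervals is known, ``the positions and multiplicities of the $s'_j$- and $s''_j$-elements are entirely determined'' -- is exactly the delicate point, and it fails at the relevant boundary: the paper's proof records an exceptional FC configuration with $k=6$ in which an $s''$-element sits outside the periodic pattern (Figure~\ref{fig:specialE6E7}, right), and ruling this out is precisely what makes the threshold $k\ge 7$ rather than $k\ge 6$. If your determination claim were true as stated, the lemma would already hold at $k=6$; so your plan either proves too much or stalls where the extra argument is needed. What is required is the explicit convex-chain analysis: an $s'_j$-element must separate consecutive $s_j$-elements lying in adjacent intervals, an $s''_j$-element must separate the two $s'_j$-elements of each run, and for $k\ge 7$ no further $s'_j$- or $s''_j$-element can be inserted without creating a forbidden convex chain. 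A final practical caution: carrying this out yields, per period of four $t$-elements, three $s_j$-, two $s'_j$- and one $s''_j$-element on each long branch, so you should verify your final pattern against the displayed word $\mathbf{w}$ carefully (its letter multiplicities, as printed, do not obviously match this count) rather than taking it verbatim as the target.
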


We will sketch the proof here, since it closely resembles the one for $\aff{E}_6$.

\begin{proof}
Let $H$ be  a FC heap of type $\widetilde{E}_7$ with $k\geq 0$ $t$-elements $t^{(1)}<\dots<t^{(k)}$. Between two successive $t$-elements, there are either (a) two elements with distinct labels in $\{s_0,s_1,s_2\}$ or (b) three elements with labels $s_0,s_1$ and $s_2$. Case (b) occurs only if $k\leq 4$, as one checks easily. 
 Now assume $k\geq 5$ so that between two successive $t$-elements, there are two elements with distinct labels in $\{s_0,s_1,s_2\}$. The label $s_0$ cannot appear in consecutive intervals, and if it does not appear in two consecutive intervals, then $k\leq 5$ is forced. For $k\geq 6$ it is then easy to check that the $s$-labels fit with the statement of the lemma, and this holds as well for the $s'$-labels; it remains to see that the $s''$-elements do too. For $k=6$ there is a possibility for such an $s''$-element to contradict the lemma, see Figure~\ref{fig:specialE6E7}, right, but such things cannot happen for  $k\geq 7$, which proves the lemma.
\end{proof}

By analysing the previous proof, one checks that the longest length among FC elements with $k\leq 6$ which do not obey the conclusion of the lemma is $28$, an example being given in Figure~\ref{fig:specialE6E7}, right. Also there is only a finite number of heaps with a given extremal $t$-interval for $k\geq 7$. So periodicity of the growth sequence for FC elements of type $\aff{E}_6$ starts at $\ell_0+1=29$ and the function has then period $P=9$.

\begin{align*}
&\aff{E}_7^{FC}(q)=1+8q+35q^{2}+105q^{3}+238q^{4}+428q^{5}+634q^{6}+806q^{7}+918q^{8}\\
&+976q^{9}+979q^{10}+940q^{11}+873q^{12}+802q^{13}+713q^{14}+623q^{15}+546q^{16}\\
&+473q^{17}+390q^{18}+295q^{19}+256q^{20}+228q^{21}+212q^{22}+200q^{23}+188q^{24}\\
&+184q^{25}+180q^{26}+180q^{27}+176q^{28}\\
&+174q^{29}+174q^{30}+174q^{31}+174q^{32}+174q^{33}+174q^{34}+174q^{35}+176q^{36}+174q^{37}\\
&+174q^{38}+174q^{39}+174q^{40}+174q^{41}+174q^{42}+174q^{43}+174q^{44}
+176q^{45}+174q^{46}\\
&+\cdots
\end{align*}

%%%%%%%%%%%%%%%%%%%%%%%%%%%%%%%%%%%%%%%%%
\section{Applications and further questions}
\label{sec:further}
%%%%%%%%%%%%%%%%%%%%%%%%%%%%%%%%%%%%%%%%%

In this section we first give a direct application of our results to the growth of Temperley--Lieb algebras, as was announced in the introduction. Then in Sections~\ref{sub:min} to ~\ref{sub:gencox}, we announce some work in progress concerning natural extensions of the results. Finally a few questions which we believe deserve further study are listed in Section~\ref{sub:further}.

%%%%%%%%%%%%%%%%%%%%%
\subsection{Temperley--Lieb algebras and their growth}
\label{sub:TL}
%%%%%%%%%%%%%%%%%%%%%
 %The Temperley-Lieb algebra $TL_n$ has a standard presentation given by generators $e_i$ for $i=1,\ldots n-1$ and relations $e_i^2=\beta e_i$, $e_ie_{i\pm 1}e_i= e_i$ and $e_ie_j=e_je_i$ if $|j-i|>1$.

Consider the ring $\mathcal{A}=\mathbb{Z}[\mathfrak{q},\mathfrak{q}^{-1}]$; here we use $\mathfrak{q}$ instead of $q$ to avoid confusion with the variable in our generating functions. For $W$ a Coxeter group with Coxeter matrix $M=(m_{st})_{s,t\in S}$, the associated  {\em Hecke algebra} $\mathcal{H}(W)$ is given by generators $T_s$ and relations 
\begin{align*}
T_s^2&=(\mathfrak{q}-1)T_s+\mathfrak{q}\mathbf{1}\quad&\text{for }s\in S;\\
\underbrace{T_sT_tT_s\cdots}_{m_{st}} &= \underbrace{T_tT_sT_t\cdots}_{m_{st}}\quad&\text{for }s\neq t\in S.
\end{align*}

For any $w\in W$, define $T_w\in \mathcal{H}(W)$ by picking any reduced decomposition $s_{i_1}\cdots s_{i_m}$ for $w$ and setting $T_w:=T_{s_{i_1}}\cdots T_{s_{i_m}}$, and $T_e=\mathbf{1}$. These elements $T_w$ then form a basis of $ \mathcal{H}(W)$ (see for instance~\cite{Humphreys}).

 The {\em generalized Temperley--Lieb algebra} $\TL(W)$ is defined as the quotient of $\mathcal{H}(W)$  by the ideal generated by the elements 
\[\sum_{w\in W_{s,t}}T_w,\quad\text{if}\quad m_{st}\geq 3,\]
where $W_{s,t}$ is the (dihedral) subgroup generated by $s$ and $t$. For instance if $m_{st}=3$ the element is $T_sT_tT_s+T_sT_t+T_tT_s+T_t+T_s+\mathbf{1}$. Let $b_w$ be the image of $T_w$ in $\TL(W)$. Then the elements $b_w$, for $w\in W^{FC}$, form a basis of $\TL(W)$ (see ~\cite[Theorem 6.2]{Graham}).

Consider now the natural filtration $\TL(W)_0\subset \TL(W)_1\subset \cdots$ of $\TL(W)$, where $\TL(W)_{\ell}$ is the linear span in $\TL(W)$ of all products $b_{s_{i_1}}\cdots b_{s_{i_k}}$ with $k\leq \ell$. A linear basis for $\TL(W)_\ell$ is clearly given by $(b_w)_w$, where $w$ lies in the set of  all FC elements of length at most $\ell$. Let the {\em growth} of $\TL(W)$ be $G^W:\ell\mapsto \dim \TL(W)_{\ell}$, so that $G^W(\ell)$ is the number of FC elements of length at most $\ell$ in $W$.
Now let $W$ be an irreducible affine group with infinitely many FC elements. Recalling the definition of the mean value $\mu_W$ given after Corollary B, we get the following result.

\begin{theorem}
\label{theo:growthTL}
 For any affine group with infinitely many FC elements, the algebra $\TL(W)$  has {\em linear growth}: one has the asymptotic equivalent $G^W(\ell)\sim \mu_W \ell$ when $\ell$ tends to infinity.
\end{theorem}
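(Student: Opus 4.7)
The plan is to reduce the asymptotic statement about $G^W(\ell)$ directly to the ultimate periodicity of the growth sequence $(g_\ell)_{\ell\geq 0}$ of $W^{FC}$, which has already been established in Theorems~\ref{theo:periodicityA},~\ref{theo:periodicityBCD}, and in Section~\ref{sub:affExcept} for the exceptional affine types. By definition of the filtration, $G^W(\ell)$ is exactly the number of FC elements of length at most $\ell$, so $G^W(\ell)=\sum_{k=0}^{\ell}g_k$. Everything then becomes a Ces\`aro-type statement about partial sums of an ultimately periodic sequence.

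First, I would fix notation: by Corollary~B, there exist an integer $P\geq 1$ (the period, recorded in the table) and an index $\ell_1\geq 0$ such that $g_{k+P}=g_k$ for all $k\geq \ell_1$. By Propositions~\ref{prop:meanA} and~\ref{prop:meanBCD} (and the explicit series of Section~\ref{sub:affExcept} for $\aff{G}_2,\aff{E}_6,\aff{E}_7$), the common value $\mu_W=\tfrac{1}{P}(g_{\ell_1}+g_{\ell_1+1}+\cdots+g_{\ell_1+P-1})$ is precisely the mean value appearing in the statement, and is independent of the choice of $\ell_1$ in the periodic range. Note that $\mu_W>0$ since $W^{FC}$ is infinite.

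Next I would split the sum defining $G^W(\ell)$ as
\[
G^W(\ell)\;=\;\underbrace{\sum_{k=0}^{\ell_1-1}g_k}_{=:C}\;+\;\sum_{k=\ell_1}^{\ell}g_k,
\]
where $C$ is a constant independent of $\ell$. For $\ell\geq \ell_1$, write $\ell-\ell_1+1=aP+r$ with $a\geq 0$ and $0\leq r<P$. Using $g_{k+P}=g_k$ for $k\geq \ell_1$, the second sum telescopes into $a$ complete periods plus a bounded remainder:
\[
\sum_{k=\ell_1}^{\ell}g_k\;=\;a\cdot P\mu_W\;+\;\sum_{j=0}^{r-1}g_{\ell_1+j}.
\]
The remainder term is bounded uniformly in $\ell$ by $\max_{0\leq j<P}\sum_{i=0}^{j}g_{\ell_1+i}$, hence $O(1)$. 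Since $aP=\ell-\ell_1+1-r=\ell+O(1)$, we obtain $G^W(\ell)=\mu_W\ell+O(1)$, which yields $G^W(\ell)\sim \mu_W\ell$ as $\ell\to\infty$ and proves linear growth.

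There is no real obstacle here: all heavy lifting has been done in the periodicity theorems and in the computation of $\mu_W$, and the argument is a one-line application of the fact that partial sums of an ultimately periodic sequence grow linearly at rate equal to the mean over one period. The only point worth flagging is that one should recall why $\mu_W$ is well-defined: it requires $g_\ell\geq 0$ (trivial) and the existence of a period, which is exactly the content of Corollary~B. In particular, the same argument would fail for the finite cases $\aff{F}_4$ and $\aff{E}_8$, which is why the hypothesis ``infinitely many FC elements'' is needed—otherwise $G^W$ is eventually constant and $\mu_W=0$.
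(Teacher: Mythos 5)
Your proof is correct and is essentially the argument the paper relies on (the paper treats the theorem as an immediate consequence of the periodicity results and the definition of $\mu_W$): you sum the ultimately periodic growth sequence, split off complete periods, and bound the remainder, exactly as intended. The only ingredient worth making explicit, which you do flag, is that $\mu_W>0$ because infinitely many FC elements force some strictly positive value within each period.
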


We refer to the books~\cite{GrowthAlgebrasBook,Ufnarovskij} for more informations on growth of algebras; in particular a less precise version of Theorem~\ref{theo:growthTL} is that such algebras have {\em Gelfand--Kirillov dimension} equal to $1$. In the simply laced case, it was also noticed in~\cite{Ukraine_simple} that the growth is linear.

 Define the {\em nil Temperley--Lieb algebra} ${\rm nTL}(W)$ as the graded algebra associated to $\TL(W)$: by definition, its $\ell$th grade component is given by $\TL(W)_\ell/\TL(W)_{\ell-1}$, and the multiplication is inherited from $\TL(W)$. It is easily seen to have the presentation with generators $u_s$ and relations 
 \[\begin{cases}
 u_s^2=0;\\
 \underbrace{u_su_tu_s\cdots}_{m_{st}}=0\quad\text{ if}& m_{st}\geq 3;\\
  u_su_t=u_tu_s\quad\text{ if}& m_{st}=2.
 \end{cases}\]
  This algebra seems to have been studied only for type $A_{n-1}$ by Fomin and Greene in~\cite{FominGreene} and for type $\aff{A}_{n-1}$ by Postnikov in~\cite{Postnikov}.  Either from its definition or the presentation, one sees that the $\ell$th graded component of ${\rm nTL}(W)$ has a basis $(u_w)$ indexed by FC element of length $\ell$, and we have the following consequence.
  
\begin{corollary}
The Hilbert series of  ${\rm nTL}(W)$ is equal to $W^{FC}(q)$. 
\end{corollary}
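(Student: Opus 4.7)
The plan is to establish a homogeneous linear basis of ${\rm nTL}(W)$ indexed by $W^{FC}$, with basis element $u_w$ sitting in graded degree $\ell(w)$; once this is in place, the Hilbert series computation is immediate.

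First I would define, for each $w \in W^{FC}$, the element $u_w := u_{s_{i_1}} \cdots u_{s_{i_\ell}}$ using any reduced expression $s_{i_1} \cdots s_{i_\ell}$ for $w$. Well-definedness is precisely full commutativity (Definition~\ref{defi:FC}): any two reduced expressions for $w$ are connected by commutations $st = ts$ with $m_{st}=2$, and each such commutation is implemented by the defining relation $u_s u_t = u_t u_s$ of ${\rm nTL}(W)$. Clearly $u_w$ is homogeneous of degree $\ell(w)$.

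Next I would show that $(u_w)_{w \in W^{FC}}$ is a linear basis of ${\rm nTL}(W)$ by identifying ${\rm nTL}(W)$ with the associated graded of the filtered algebra $\TL(W) = \bigcup_\ell \TL(W)_\ell$ introduced just above. Because $b_w \in \TL(W)_{\ell(w)}$ and $(b_w)_{w \in W^{FC}}$ is a basis of $\TL(W)$ by Graham's theorem, the classes $[b_w]$ with $\ell(w) = \ell$ form a basis of $\TL(W)_\ell/\TL(W)_{\ell-1}$. A direct comparison of presentations then shows that this associated graded coincides with the presented ${\rm nTL}(W)$, the generator $u_s$ corresponding to $[b_s]$: the relation $u_s^2 = 0$ is the top-degree part of $T_s^2 = (\mathfrak{q}-1)T_s + \mathfrak{q}\mathbf{1}$, the vanishing $\underbrace{u_s u_t u_s \cdots}_{m_{st}} = 0$ for $m_{st} \geq 3$ is the top-degree part of the Temperley--Lieb relation $\sum_{w \in W_{s,t}} T_w = 0$, and $u_s u_t = u_t u_s$ for $m_{st} = 2$ comes from the corresponding braid relation in $\mathcal{H}(W)$. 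Under this identification $u_w \leftrightarrow [b_w]$, so $(u_w)_{w \in W^{FC}}$ is a homogeneous basis of ${\rm nTL}(W)$.

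With the basis in hand, the Hilbert series is just
\[
\sum_{\ell \geq 0} \dim\bigl({\rm nTL}(W)_\ell\bigr)\, q^\ell
= \sum_{w \in W^{FC}} q^{\ell(w)} = W^{FC}(q).
\]
The one non-automatic point in this outline is the identification of the presented ${\rm nTL}(W)$ with the associated graded of $\TL(W)$: one has a tautological surjection from the former to the latter, and the issue is to rule out that any further relation appears in the graded quotient. A safe alternative that bypasses this comparison is to prove the basis property intrinsically: spanning by a normal-form argument that iteratively uses $u_s^2 = 0$, the vanishing of braid products, and commutations to rewrite any word in the generators either as $u_w$ for some FC $w$ or as $0$; and linear independence by factoring through the surjection onto the associated graded, whose dimension in each degree is already known to equal $\#\{w \in W^{FC} : \ell(w) = \ell\}$.
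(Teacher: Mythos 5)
Your argument is correct, and its core is the same as the paper's: since $(b_w)_{w\in W^{FC}}$ is a basis of $\TL(W)$ compatible with the filtration (so that $\TL(W)_\ell$ has basis the $b_w$ with $\ell(w)\le\ell$), each graded component $\TL(W)_\ell/\TL(W)_{\ell-1}$ has dimension equal to the number of FC elements of length $\ell$, and the Hilbert series is $W^{FC}(q)$. The additional work you do on matching the presentation with the associated graded is not actually needed for the corollary, because the paper \emph{defines} ${\rm nTL}(W)$ as the associated graded of $\TL(W)$ and mentions the presentation only as an aside; however, your normal-form/spanning sketch (rewrite any word in the $u_s$ either to $u_w$ for $w$ FC or to $0$, using commutations together with $u_s^2=0$ and the braid vanishings, with Proposition~\ref{prop:heaps_fullycom} guaranteeing that the surviving words are reduced words of FC elements) combined with the surjection onto the associated graded does supply a justification of the paper's ``easily seen'' claim that the presented algebra coincides with ${\rm nTL}(W)$ -- a point the paper leaves unproved. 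So your proof is valid and, if anything, slightly more detailed than required.
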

 Now for any $W$, ${\rm nTL}(W)$ is a finitely presented, graded algebra: for any affine or finite type, we used the \texttt{GAP} package \texttt{GBNP} to compute, for any length $\ell$, a basis of all components  of ${\rm nTL}(W)$ up to dimension $\ell$; equivalently, it gives us access to all FC elements up to Coxeter length $\ell$. 
  
  We used this data to verify our results for all classical types: we checked that the generating functions $W^{FC}(q)$ which we computed coincided with the Hilbert series given by the computer, for small values of the parameter $n$ and up to a large length $\ell$. This also gives confirmations that our descriptions of FC elements is correct for these types. For exceptional types, we used it to compute the generating functions themselves, as explained in Section~\ref{sub:affExcept}.

%%%%%%%%%%%%%%%%%%%%%
\subsection{Minimal periods}
\label{sub:min}
%%%%%%%%%%%%%%%%%%%%%

As we already pointed out, the periods obtained for classical affine types are not always the {\em minimal periods}. For instance, for  type $\aff{A}_{n-1}$, it was shown by Hanusa and Jones in~\cite{HanJon} that, when $n$ is prime, the corresponding growth sequence $(a^n_\ell)_{\ell\geq 0}$ is eventually constant. In the work~\cite{JouhetNadeau} by the second and third authors, the minimal periods for all cases is determined. In particular, in type $\aff{A}_{n-1}$, it is shown that the minimal period of $(a^n_\ell)_{\ell\geq 0}$ is $n$ unless  $n$ is a prime power $p^m$, in which case the minimal period is $p^{m-1}$. This generalizes the aforementioned result.

%%%%%%%%%%%%%%%%%%%%%
\subsection{Involutions and minuscule elements}
%%%%%%%%%%%%%%%%%%%%%

An element $w$ in a Coxeter group is an {\em involution} if and only if its set of reduced decompositions $\mathcal{R}(w)$ is {palindromic}, meaning that it is stable by taking the mirror images of its elements. For FC elements, this is equivalent to the fact that  $\H(w)$ is equal to its dual, which itself means that every word in $H_{\{s,t\}}$ is palindromic. For classical affine types, our characterizations of FC elements easily then specializes to involutions. The observation here is that in the path encoding of alternating heaps, palindromic words correspond to steps which are either up or down. From this it is possible to show that ultimate periodicity still holds in the enumerating sequences, and obtain all our results in the case of involutions for affine types, thereby extending the work of Stembridge from~\cite[Section 4]{St3}.

Another intersesting subset of FC elements is the set of {\em minuscule elements}, which are linked to representation theory. They were also studied by Stembridge in~\cite{Stem_Minuscule} who characterized their heaps by local conditions extending Proposition~\ref{prop:heaps_fullycom}. By using our description of FC heaps in the affine types, one can recognize among them which ones correspond to minuscule heaps and then study their enumerative properties. 

Both subsets will be the subject of the forthcoming article~\cite{BJN_families}. Let us add that a third one was introduced recently, namely the set of {\em cyclically fully commutative elements} (see~\cite{CyclicFC}). It is possible to describe explicitly the corresponding elements (or heaps) in finite and affine types and enumerate them; this is done in the work~\cite{Petreolle} of P\'etr\'eolle.

%%%%%%%%%%%%%%%%%%%%%%%%
\subsection{Diagram representations of Temperley--Lieb algebras}
\label{sub:diagramTL}
%%%%%%%%%%%%%%%%%%%%%%%%

We recall from the introduction that for a given $W$ of type $\Gamma$, the FC elements index naturally a basis of the (generalized) Temperley--Lieb algebra $\TL(W)$. On the other hand the usual Temperley--Lieb algebra of type $A$ is known to have a faithful representation as a {\em diagram algebra}. Such representations have since been extended to other types: $B$ and $D$ in~\cite{Green_TLBD}, $H$ in~\cite{Green_H}, $E$ in~\cite{Green_TLE}, which are finite dimensional algebras;  $\aff{A}$ in~\cite{FanGreen_Affine}, $\aff{C}$ in~\cite{ErnstDiagramI,ErnstDiagramII}, which are infinite dimensional algebras.

The procedure to obtain such a faithful representation is more or less always the same in the previously cited works: (1) Define a set $\mathcal{D}$ of (decorated) diagrams and a way to multiply them by some concatenation procedure; (2) determine a subset in $\mathcal{D}$ of elementary diagrams indexed by $S$, which satisfy the relations of $\TL(W)$; (3) Determine explicitly the subspace generated by the elementary diagrams, say $\mathcal{D}'$; (4) Prove that the surjective morphism $\TL(W)\to\mathcal{D}'$ thus obtained is injective.

It is these steps (3) and (4) that can be greatly simplified thanks to our global approach to fully commutative elements, as will be seen in the forthcoming work~\cite{BJN_FCdiagrams}, where we plan to extend such diagram algebras to the remaining classical affine types $\aff{B}$ and $\aff{D}$.

%%%%%%%%%%%%%%%%%%%%%
\subsection{General Coxeter groups}
\label{sub:gencox}
%%%%%%%%%%%%%%%%%%%%
 Though we focused solely on affine and finite Coxeter groups in this work, FC elements are defined for any Coxeter group in Definition~\ref{defi:FC}. It is natural to ask how to extend our results to more general groups. This is done by the third author in~\cite{Nadeau} for two such extensions.
 
  First, it settles the problem which Corollary B in the introduction raises naturally: can one classify Coxeter groups $W$ having an ultimately periodic growth sequence? It turns out that there are only two such groups which are not affine. Second, it is shown in~\cite{Nadeau} that, for any $W$, the FC growth sequence satisfies a linear recurrence relation with constant coefficients. Equivalently, the generating function $W^{FC}(q)$ is always a rational function. Both of these results have direct consequences regarding the growth of generalized Temperley--Lieb algebras (see Section~\ref{sub:TL}).

%%%%%%%%%%%%%%%%%%%%%
\subsection{Further questions}
\label{sub:further}
%%%%%%%%%%%%%%%%%%%%%

 It would be interesting to explore other statistics on the sets $W^{FC}$ which can be studied naturally on heaps. An example would be the sets of left and right descents, which are defined for any Coxeter group: for a FC element $w$, these descents correspond to the minimal and maximal elements of  $\H(w)$.
\medskip

\begin{comment} In type $A$, the number of reduced decompositions of a FC element is given by the number of skew Young tableaux of a certain shape. This is well known and easily deduced from the description of FC heaps. In type $B$, alternating elements correspond to skew tableaux on certain shifted shapes, as was already noticed by Stembridge~\cite{St3}[Proposition 6.1]; Proposition 10.5 of the same article formulates a similar statement for type $D$.\marginpar{bon je sais pas trop ou mettre ca en fait} In the same vein, a FC element of type $\aff{A}$ has a certain cylindric shape determined by its heap, and reduced decompositions can be interpreted as certain standard tableaux on that shape.
\end{comment}

 Affine Coxeter groups get their name from the geometric representation of Coxeter groups; we refer to~\cite{Bourbaki} or~\cite{Humphreys} for details. In brief, elements of $W$ correspond bijectively to the regions (called \emph{alcoves}) of a certain regular tiling of $\mathbb{R}^n$. If $C_0$ is the alcove of the identity of $W$ and is fixed, then the length of $w$ corresponds to the distance from $C_0$ to $C$ (here the distance is measured in the minimum number of pairs of adjacent alcoves that one must encounter between $C_0$ and $C$, where two alcoves are adjacent if they are separated by a single hyperplane). 
 
 The alcoves for the affine group $\aff{G}_2$ are depicted in Figure~\ref{fig:g2}, the colored ones corresponding to FC elements. It is easy to give a geometric criterion for the location of alcoves for FC elements. It should be possible to use these geometric representations to obtain alternative proofs of our results. For instance, understanding the periodicity of the growth sequence from this point of view would be very interesting, especially if this can be done in a uniform manner.

\begin{figure}[!ht]
\begin{center}
\includegraphics{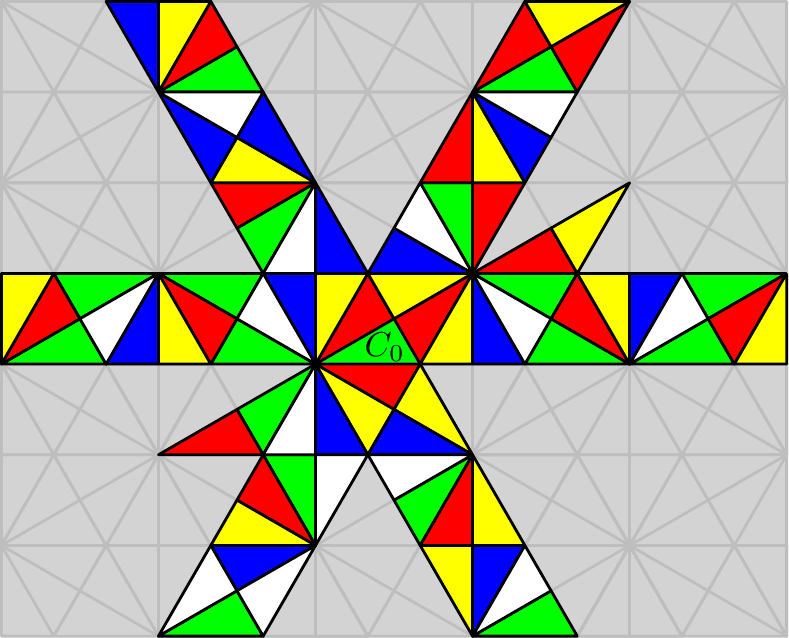}
\caption{\label{fig:g2} Fully commutative alcoves in type $\aff{G}_2$.}
\end{center}
\end{figure}

\begin{comment}

One may try to find a formula for $\Pos^*(x)$, maybe in terms of certain $q$-Bessel functions as for $\Motz(x)$, since this would give formulas for $B^{FC}(x)$ and $D^{FC}(x)$. A possible starting point is the functional equation $\Pos(x)=\Pos^*(x)+x\Motz(x)\Pos(x)$, obtained as for~\eqref{relationA1}, which, together with~\eqref{systemB} and~\eqref{eqfonctA}, yields:
$$\Pos^*(x)=\Motz(x)+q(\Motz(x)-1)\Pos^*(qx).$$

\end{comment}
%%%%%%%%%%%%%%%%%%%%%%%%%%%%%%%%%%%%%%%%%

%\bibliographystyle{plain}
%\bibliography{fullycommut}

\end{document}